\tikzset{vblack/.style={circle, fill=black, inner sep=0.8pt}}
\tikzset{vblue/.style={circle, fill=blue, inner sep=0.8pt}}
\tikzset{vred/.style={circle, fill=red, inner sep=0.8pt}}
\tikzset{vgrey/.style={circle, fill=gray, inner sep=0.8pt}}
\tikzset{vempty/.style={circle, draw, inner sep=0.7pt}}
\tikzset{rotate around x=270}
\newtheorem{thm}{Theorem}%
\newtheorem{cor}{Corollary} 
\newtheorem{lem}{Lemma} 
\newtheorem{prop}{Proposition} 
\newtheorem{conj}{Conjecture}
\theoremstyle{remark}
\numberwithin{equation}{section}
\DeclareMathOperator{\littleo}{o}
\newcommand{\tube}{\mathbb T}
\newcommand{\tubeLM}{\mathbb T_{M_1,M_2}}
\DeclareMathOperator{\trunk}{trunk}
\newcommand{\eps}{\epsilon}
\title{Entanglement statistics of polymers in a lattice tube and unknotting of 4-plats}
\author[a,*]{Nicholas R. Beaton}
\author[b]{Kai Ishihara}
\author[c]{Mahshid Atapour}
\author[d,e]{Jeremy W. Eng}
\author[f,g]{Mariel Vazquez}
\author[h]{Koya Shimokawa}
\author[d]{Christine E. Soteros}
\affil[a]{School of Mathematics and Statistics, University of Melbourne, Melbourne 3010, Australia}
\affil[b]{
Department of Mathematics, Hiroshima University, Hiroshima 739-8526, Japan }
\affil[c]{Mathematics and Statistics, Capilano University, North Vancouver V7J 3H5, Canada}
\affil[d]{Department of Mathematics and Statistics, University of Saskatchewan, Saskatoon S7N 5E6, Canada}
\affil[e]{(currently) Sask Polytech, Saskatoon S7K 0R8, Canada}
\affil[f]{Department of Microbiology and Molecular Genetics, University of California Davis, Davis CA 95616, USA}
\affil[g]{Department of Mathematics, University of California Davis, Davis CA 95616, USA}
\affil[h]{Department of Mathematics, Ochanomizu University, Tokyo 112-8610, Japan}
\affil[*]{Corresponding Author:
\href{mailto:nrbeaton@unimelb.edu.au}{nrbeaton@unimelb.edu.au}}
\begin{document}
	
	\maketitle

\begin{abstract}
The Knot Entropy Conjecture states that the exponential growth rate of the number of $n$-edge lattice polygons with knot-type $K$ is the same as that for unknot polygons. Moreover, the next order growth follows a power law in $n$ with an exponent that
increases by one for each prime knot in the knot decomposition of $K$. We provide the first proof of this conjecture by considering knots and non-split links in tube $\tube^*$, an $\infty \times 2\times 1$ sublattice of the simple cubic lattice. We establish upper and lower bounds relating the asymptotics of the number of $n$-edge polygons with fixed link-type in $\tube^*$ to that of the number of $n$-edge unknots. For the upper bound, we prove that polygons can be unknotted by braid insertions. For the lower bound, we prove a pattern theorem for unknots using information from exact transfer-matrices. This work provides new knot theory results for 4-plats and new combinatorics results for lattice polygons. 
Connections to modelling polymers such as DNA in nanochannels are  highlighted. 
\end{abstract}


\section{Introduction}

{I}nterest in knotting and linking in lattice polygons is motivated by polymer and biopolymer modelling applications. 
For example, in 1962 Frisch and Wasserman~\cite{Fri61} and Delbr\"uck~\cite{Del62} conjectured that sufficiently long ring polymers in dilute solution are knotted with high probability. The Frisch-Wasserman-Delbr\"uck (FWD) conjecture
was first proved in ~\cite{Pip89,Sum88} for a lattice polygon model of ring polymers. Other proofs followed for several off-lattice polymer models~\cite{Dia94, Diao95, Dia01, Jung94} and for knot diagram models~\cite{EZHNL18,witte_phd_thesis,Chapman19}. 

With the FWD conjecture established for all these models, researchers became interested in the statistics for specific knot-types. Initial results for polygons in the simple cubic lattice followed; simple cubic lattice polygons have vertices in $\mathbb{Z}^3$ and edges of unit length. 
The knot statistics
depend on the number $p_n(K)$ of $n$-edge lattice polygons (up to translation) with fixed knot-type $K$, and its dependence on $K$ and $n$, as $n\to\infty$.  
These counts are lattice dependent and have been studied for the simple cubic as well as other lattices \cite{Rechnitzer, rechnitzer_rensburg}. 
Based on polymer scaling theory,
Orlandini, Tesi, Janse van Rensburg and Whittington \cite{Orl96,Orl98} 
made a conjecture about the asymptotic growth of $p_n(K)$ as $n\to \infty$. 
They predicted that the leading order term of $p_n(K)$ has the form $\mu_{0_1}^n$, where the constant $\mu_{0_1}$ depends only on the lattice and is the \emph{same for every $K$,} including the unknot. They also conjectured that the
 next order growth follows a power law in $n$ with an exponent that
increases by one for each prime knot in the knot decomposition of $K$. 
This 
\emph{Knot Entropy (KE) Conjecture} 
for lattice polygons
is stated formally as follows.
\begin{conj}[Knot Entropy (KE) Conjecture \cite{Orl96,Orl98}]
	
 For any given knot-type $K$, as $n\to\infty$
	$p_n(K)$ satisfies the asymptotic equation
	\begin{equation}
		p_n(K) = B_Kn^{f_K}p_n(0_1)(1+\littleo(1)), \label{eqn1}
	\end{equation}
	where $0_1$ is the unknot, $f_K$ denotes the number of prime knot factors in the knot decomposition of $K$, and $f_{0_1}\equiv 0$.
	Moreover,
	\begin{equation}
		p_n(0_1) = A_0n^{\alpha_{0_1}}\mu_{0_1}^n(1+\littleo(1)), \label{eqn2} %
	\end{equation}
	where $\displaystyle{\alpha_{0_1}}$ is 
	called
	the \textbf{unknot power-law exponent (or entropic critical exponent)} and
	$\mu_{0_1}$ is called
	the \textbf{unknot exponential growth constant}. 
	The constants $B_K$, $A_0$ and $\mu_{0_1}$ are potentially lattice-dependent. Furthermore $\kappa_{0_1}=\log\mu_{0_1}$, the \textbf{unknot growth rate}, is the limiting entropy per monomer for the unknot.
	\label{mainconj}
\end{conj}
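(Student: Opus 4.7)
The plan is to establish Eq.~\eqref{eqn1} for knot/link types $K$ embeddable in $\tube^*$ by proving matching upper and lower bounds of the form
\[
c_K\, n^{f_K}\, p_n(0_1) \;\le\; p_n(K) \;\le\; C_K\, n^{f_K}\, p_n(0_1)
\]
for all sufficiently large $n$. Equation~\eqref{eqn2} for $p_n(0_1)$ is proved separately via transfer-matrix analysis: because $\tube^*$ has a fixed $2\times 1$ cross-section, the set of unknot polygons can be enumerated by an irreducible transfer matrix, yielding an algebraic generating function and, by standard singularity analysis, the asymptotics $p_n(0_1) \sim A_0\, n^{\alpha_0}\, \mu_{0_1}^n$.

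For the upper bound I would prove that every polygon $\pi$ of knot or non-split link type $K$ with $f_K$ prime factors can be unknotted by inserting $f_K$ local braid patterns of uniformly bounded length $L$. Since $\tube^*$ has width two, the natural projection of $\pi$ is essentially a 4-plat diagram, so the central diagrammatic input is the lemma announced in the abstract: every 4-plat diagram of a 2-bridge link is unknotted by the insertion of a 4-braid whose crossing number is bounded by the minimal crossing number of the link. Composite knots and links are then handled by peeling off one prime factor per insertion, inducting on $f_K$. Reading this unknotting map in reverse, each length-$(n-f_K L)$ unknot polygon contributes at most $\bigO(n^{f_K})$ length-$n$ polygons of type $K$ (the factor $n^{f_K}$ counts the choices of insertion locations along the polygon), giving $p_n(K) \le \bigO(n^{f_K})\, p_{n-f_K L}(0_1)$, which yields the upper bound after absorbing the length shift via Eq.~\eqref{eqn2}.

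For the lower bound I would prove a Kesten-style pattern theorem for \emph{unknot} polygons in $\tube^*$: there exist a finite tube pattern $P$ and a constant $\eps > 0$ such that all but exponentially few length-$n$ unknot polygons contain at least $\eps n$ disjoint copies of $P$. For each prime knot/link type $K_i$ I would construct an alternate pattern $P_{K_i}$ whose endpoints match those of $P$ and whose insertion adds precisely the factor $K_i$ to the knot-type. Given $K = K_1 \# \cdots \# K_{f_K}$, selecting any $f_K$ of the $\eps n$ available slots and performing the swaps produces $\binom{\eps n}{f_K} \ge c'\, n^{f_K}$ distinct polygons of type $K$ from each such unknot polygon, yielding the matching lower bound.

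The principal obstacle is the pattern theorem itself. Classical arguments of Kesten and of Soteros--Sumners--Whittington rely on direct concatenation of polygons, which is unavailable here because concatenation can change the knot-type and because the tube constraint restricts how polygons may be joined. My plan is to replace concatenation with transfer-matrix surgery, driving the count by the spectral gap between the unknot transfer matrix and the transfer matrix associated with unknot polygons avoiding an occurrence of $P$; the exact diagonalisation available for $\tube^*$ should make this quantitative. Equally delicate is the topological bookkeeping needed to guarantee that pattern replacement preserves unknottedness on one side and produces precisely the prescribed prime decomposition on the other, which for non-split links and for composites confined to a narrow tube requires a careful case analysis of the 4-plat diagrams that arise.
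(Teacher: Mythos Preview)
The statement you are attempting to prove is labelled as a \emph{conjecture} in the paper; the paper does not prove it in full.  What the paper actually establishes (Theorem~\ref{thm-bounds}) is the two-sided bound $C_1 n^{f_L} p_{\tube^*,n}(0_1) \le p_{\tube^*,n}(L) \le C_2 n^{f_L} p_{\tube^*,n}(0_1)$ for links in $\tube^*$, which gives Eq.~\eqref{eqn1} only up to the leading constant $B_K$.  Equation~\eqref{eqn2} is \emph{not} proved: the paper states explicitly that the existence of the exponent $\alpha_0$ remains open even in $\tube^*$.

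Your high-level architecture for Eq.~\eqref{eqn1} --- unknot via bounded braid insertions for the upper bound, pattern theorem plus pattern swaps for the lower bound --- matches the paper's.  But there is a genuine gap in your mechanism for both Eq.~\eqref{eqn2} and the pattern theorem.  You write that ``the set of unknot polygons can be enumerated by an irreducible transfer matrix'' and propose to obtain the pattern theorem from ``the spectral gap between the unknot transfer matrix and the transfer matrix associated with unknot polygons avoiding an occurrence of $P$''.  This is precisely what fails: unknots in a lattice tube do \emph{not} admit a finite transfer matrix, because knottedness is a global property not determined by any bounded window of the polygon.  The paper says so explicitly and is forced to take a different route.

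The paper's workaround for the pattern theorem is worth noting because it is the non-obvious step.  Rather than comparing two (nonexistent) unknot transfer matrices, it compares two \emph{numerical} bounds obtained by different methods: an upper bound on the growth rate $\hat\kappa_{\tube^*}$ of \emph{all} polygons (any knot type) with no 2-sections, computed from the genuine finite transfer matrix for unrestricted polygons; and a lower bound on the unknot growth rate $\kappa_{\tube^*}(0_1)$ obtained from superadditivity and brute-force enumeration of small unknots.  The inequality $\hat\kappa_{\tube^*} < 0.446287 < 0.620044 \le \kappa_{\tube^*}(0_1)$ then gives that unknots with no 2-sections are exponentially rare, and a separate combinatorial surgery (removing 2-sections one at a time while controlling the edge count) bootstraps this to a positive-density statement.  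Your spectral-gap plan would need to be replaced by something along these lines.
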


There is strong numerical evidence in support of the KE Conjecture~\cite{Rechnitzer,Orl98}. However an analytical proof has been lacking.
Let 
$\displaystyle{p_n=\sum_{K} p_n(K)}$
be the number of $n$-edge lattice polygons regardless of knot type and let $\mu$ be its exponential growth constant. It has been shown that the limits defining $\mu_{0_1}$
and $\mu$ exist
and that $\mu_{0_1}<\mu$~\cite{Sum88}.
The existence of exponential growth constants for non-trivial knots $K$
and of the exponent $\alpha_{0_1}$ are open problems. 
A positive answer to the KE Conjecture, together with a corresponding result for 
$p_n$, would imply that the probability $p_n(K)/p_n$ of a random $n$-edge polygon having knot-type $K$ scales with $n$ like $C_K(\frac{\mu_{0_1}}{\mu})^n n^{\sigma +f_K}$, for $\sigma$ independent of $K$. It is believed that $\sigma=0$ for lattice polygons \cite{Rechnitzer}.
A similar scaling form is expected to hold for off-lattice polygons \cite{Uehara2017,Xiong2021,Cantarella2024}.
For fixed $n$ and variable $f_K$, this scaling form for the knot probabilities suggests that the prime factors of a knot are  distributed randomly along an $n$-edge ring polymer according to a Poisson-like distribution, somewhat like random pearls on a string. The latter is consistent with the expectation that knot factors are on average localized in polymers. 

In this paper we consider the applicability of the KE Conjecture to confined lattice polygons. 
The study of knot and link statistics of polymers in confinement is largely motivated by problems in molecular biology, such as DNA packing in viruses and human cells, or DNA transport through nanochannels~\cite{Arsuagacapsid,Plesa_2016, Arsuaga-fmolb2015,micheletti_polymers_2011,Orlandini_2021}. Different types of confinement are of interest. Researchers have modeled polymers under ``full'' confinement as in a sphere or capsid, or by restricting a single direction (slab, e.g.~\cite{micheletti_polymers_2011,Orlandini_2021}), or by restricting only two directions (nanochannel or tube). 
While the KE Conjecture is not expected to hold for full confinement, 
numerical studies of 
lattice tube polygons
have led to the supposition that the conjecture holds for
lattice tubes~\cite{BES19}. %
Here, we prove the KE Conjecture for the smallest  tube that admits non-trivial knots. We have published a rough sketch of the proof in a Letter \cite{beaton_first_2024}. The Letter was directed towards a broad physics audience; here we give the full details of 
the mathematical results and proofs, which are 
primarily
combinatorial in nature.

\begin{figure}[htb]
\begin{center}
 \includegraphics[width=.8\textwidth]{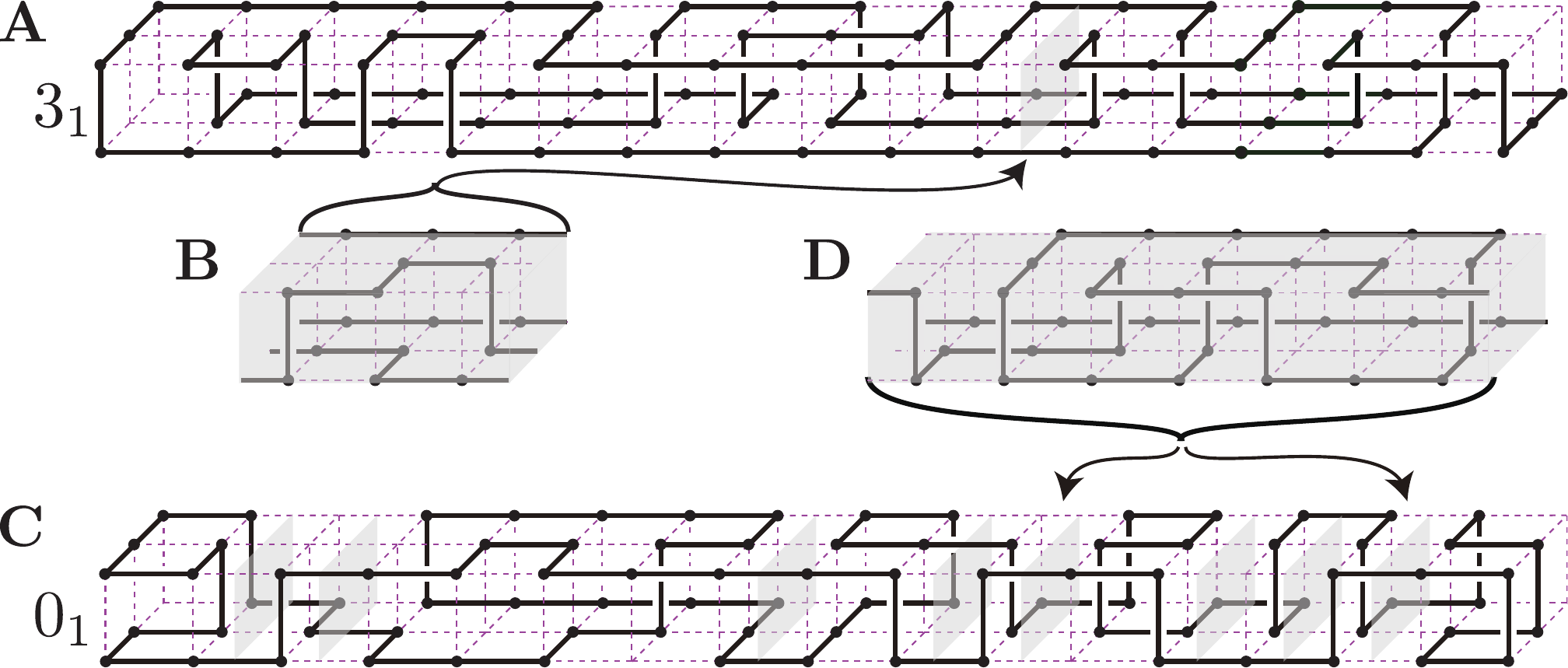}
 \caption{{\bf A.}~\textbf{Example for the upper bound in Theorem \ref{thm-bounds}, equation \ref{mainrelation}.} 
 A 98-edge polygon of knot-type $3_1$ (trefoil) in $\tube^*$; 
 {\bf B.} An insert enclosed in a 3-block. The insert is a 4-braid containing a half twist of two strings. When inserted into (A) at the identified location, this 3-block converts the $3_1$ into an unknot. 
 {\bf C.}~\textbf{Example for the lower bound in Theorem \ref{thm-bounds}, equation \ref{mainrelation}.} The figure shows an unknot lattice polygon with eight highlighted 2-sections. 
{\bf D.}~The shaded image is a trefoil pattern enclosed in a $7$-block. When this pattern is inserted at any of the 2-sections indicated by arrows, the unknotted polygon turns into a trefoil polygon in $\tube^*$.  Note that one can insert the trefoil pattern at any of the other 2-sections of the original polygon by modifying its ends. Figures (A) to (D) have been modified from 
 \cite[Fig. 1]{beaton_first_2024}.}
 \label{fig1intro}
 \end{center}
 \end{figure}

Lattice tube models have been useful for modelling polymers under confinement since the 1970s~\cite{hammersley1985self,WalletalPNAS1978}. 
The smallest tube
of $\mathbb{Z}^3$ that admits non-trivial knots is the $\infty\times 2\times 1$ lattice tube, denoted here by $\tube^*$~\cite{minsteptube}. 
The prime factors of any non-trivial knot admitted in $\tube^*$ must be 
 4-plats (2-bridge knots)~\cite{minsteptube}. Figure~\ref{fig1intro}A illustrates a knotted polygon in $\tube^*$. 
Even though $\tube^*$ is narrow, polygons in $\tube^*$ are physically relevant for modelling 
DNA in nanochannels. 
Importantly all prime knots with 7 or fewer crossings and the majority %
for up to 9 crossings are 4-plats. 
The family of 4-plats includes all twist and torus links. Such links are most likely to be observed in DNA experiments~\cite{sumners_ernst_spengler_cozzarelli_1995}.

The main result of this paper is given in Theorem \ref{thm-bounds}.  This result yields Equation~\eqref{eqn1} in the KE Conjecture, 
up to the leading constant $B_K$, 
for self-avoiding polygons in $\tube^*$.   
That is, we prove  upper and lower bounds on $p_n(K)$, 
 differing only by a constant factor.
We establish similar results for 
multi-component non-split links 
in $\tube^*$.
Since a knot is a 1-component non-split link,
from here on we will only distinguish between knots and multi-component non-split links if necessary, or to connect to the KE Conjecture.

\begin{thm}\label{thm-bounds}
	Let $L$ be any non-split link embeddable in the $2\times 1$ tube $\tube^*$.
	Then, for non-trivial $L$ there exist positive constants $\epsilon\in(0,1), b_{L}\in\mathbb{R},d_L\in\mathbb{Z},e_L\in\mathbb{Z}$ (independent of $n$) and an integer $N_{L,\epsilon}>0$ such that for any $n\geq N_{L,\epsilon}$, 
	there are 
	bounds on $p_{\tube^*,n}(L)$, the number of $n$-edge embeddings of $L$  in $\tube^*$, as follows:
	\begin{equation}\begin{split}
			\frac{1}{2}\binom{\lfloor \epsilon (n-e_L)\rfloor}{f_L} p_{\tube^*,n-e_L}(0_1) &\leq p_{\tube^*,n}(L) %
			\leq b_L \binom{n}{f_L} p_{\tube^*,n+d_L}(0_1),
			\label{mainrelation}
	\end{split}\end{equation}
 where $f_L$ is the number of prime link factors in $L$. There exist 
	$C_1$, $C_2$ such that  
	for all 
	sufficiently large $n$
	\begin{equation}
		C_1n^{f_L} p_{\tube^*,n}(0_1) \leq    p_{\tube^*,n}(L) \leq C_2n^{f_L} p_{\tube^*,n}(0_1). 
		\label{finalform}
	\end{equation}
\end{thm}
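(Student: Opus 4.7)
The plan is to prove the two inequalities in \eqref{mainrelation} by complementary constructions — \emph{unknotting} polygons by braid insertions for the upper bound, and \emph{knotting} unknot polygons by pattern insertions for the lower bound — and then to derive \eqref{finalform} from \eqref{mainrelation} using transfer-matrix control of the ratios $p_{\tube^*,n+k}(0_1)/p_{\tube^*,n}(0_1)$.

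For the upper bound, I would proceed as follows. Given an $n$-edge polygon $P$ of link-type $L$ in $\tube^*$, I would first associate to $P$ a shifted knot/link diagram and then a 4-plat diagram, as indicated in Fig.~\ref{fig1intro}B--C. Invoking the 4-plat result advertised in the abstract, I would unknot each prime factor of $L$ by inserting a single 4-braid diagram whose width is bounded by the (minimal) crossing number of that factor. Realising each such braid inside $\tube^*$ as a lattice insertion (Fig.~\ref{fig1intro}E) contributes a fixed number of additional edges, so after all $f_L$ insertions we obtain an $(n+d_L)$-edge unknot polygon $P^{\prime}$ in $\tube^*$, where $d_L$ depends only on $L$. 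Recording $P^{\prime}$ together with the $f_L$ insertion sites defines a map from $L$-polygons to $(\text{unknot polygons},\,f_L\text{-subsets of edges})$. The preimage of any such pair consists of at most finitely many $L$-polygons, because the list of possible braids to re-insert is finite once $L$ is fixed; bounding this multiplicity by a constant $b_L$ yields
\[
p_{\tube^*,n}(L)\;\leq\; b_L\binom{n}{f_L}p_{\tube^*,n+d_L}(0_1).
\]

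For the lower bound, the idea is the reverse: start from an unknot polygon of size $n-e_L$ and manufacture an $L$-polygon by inserting one pattern per prime factor of $L$ (Fig.~\ref{fig1intro}F--G). Using the pattern theorem for unknots in $\tube^*$ (whose proof relies on transfer-matrix information, as mentioned in the excerpt) applied to the ``2-section'' pattern, all but an exponentially small fraction of unknot polygons of size $n-e_L$ contain at least $\epsilon(n-e_L)$ disjoint 2-sections for some fixed $\epsilon\in(0,1)$. At each chosen 2-section I would splice in a lattice realisation of one prime factor of $L$; the connect-sum structure of these insertions guarantees the resulting polygon has link-type exactly $L$, and each insertion adds a fixed number of edges, summing to $e_L$. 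The map (unknot, choice of $f_L$ 2-sections) $\mapsto$ $L$-polygon is injective up to a bounded ambiguity that is absorbed into the factor $\tfrac{1}{2}$:
\[
p_{\tube^*,n}(L)\;\geq\;\tfrac{1}{2}\binom{\lfloor\epsilon(n-e_L)\rfloor}{f_L}p_{\tube^*,n-e_L}(0_1).
\]

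To pass from \eqref{mainrelation} to \eqref{finalform}, I would use the fact that, in a finite-width tube, the transfer matrix for unknot polygons has a dominant eigenvalue and a sufficiently nice spectral structure that $p_{\tube^*,n+k}(0_1)/p_{\tube^*,n}(0_1)$ is bounded above and below by positive constants depending only on $|k|$. Combining this with the elementary estimates $\binom{\lfloor\epsilon(n-e_L)\rfloor}{f_L}\geq c_1 n^{f_L}$ and $\binom{n}{f_L}\leq c_2 n^{f_L}$ for large $n$ produces the matching $n^{f_L}$ growth in both directions and yields \eqref{finalform}. The main obstacle I expect is in the upper bound: one must verify that the 4-plat unknotting result really applies factor-by-factor to arbitrary links embeddable in $\tube^*$, that the required braids can be realised geometrically \emph{inside} the narrow $2\times 1$ tube without violating self-avoidance, and that the number of prime factors used equals $f_L$ exactly (not merely $\leq f_L$), since otherwise the exponent in $\binom{n}{f_L}$ does not match the lower bound.
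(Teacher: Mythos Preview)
Your overall architecture matches the paper's: braid-block insertions for the upper bound (Theorem~\ref{thm-insertion}), 2-section insertions governed by the unknot pattern theorem (Theorem~\ref{thm:density_of_2strings-nonHam}) for the lower bound, then a ratio-limit argument to pass from \eqref{mainrelation} to \eqref{finalform}. The concerns you flag for the upper bound (factor-by-factor applicability, geometric realisability in $\tube^*$, exactly $f_L$ insertions) are precisely what the paper handles via Proposition~\ref{prop:which_knots_fit}, Proposition~\ref{prop:braidblock}, and Lemma~\ref{f_L}.

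There is, however, a genuine gap in your passage from \eqref{mainrelation} to \eqref{finalform}. You invoke ``the transfer matrix for unknot polygons'' and its spectral structure to control $p_{\tube^*,n+k}(0_1)/p_{\tube^*,n}(0_1)$. But unknottedness is a global topological constraint that cannot be encoded by a finite transfer matrix in a lattice tube; the paper says this explicitly (Section~\ref{sec:newlower}: ``unknots in a lattice tube \emph{do not} have a finite transfer-matrix''). The ratio limit \eqref{ratiolimit} is instead obtained by an entirely different mechanism: first upgrade Theorem~\ref{thm:density_of_2strings-nonHam} to a pattern theorem for arbitrary unknot connected-sum patterns (Corollary~\ref{thm:density_of_connectsumpatterns}), then apply Madras's general ratio-limit theorem \cite[Theorem~2.2]{Madras99}, which deduces $p_{n+2}/p_n\to\mu^2$ from a pattern theorem plus the existence of two interchangeable patterns differing in length by $2$. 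Without this, your derivation of \eqref{finalform} does not go through.

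A smaller point: your explanation of the factor $\tfrac12$ in the lower bound is off. The insertion map (unknot with marked $f_L$ 2-sections) $\to$ ($L$-embedding) is genuinely injective; no ambiguity needs absorbing. The $\tfrac12$ arises because the pattern theorem only guarantees that unknots with \emph{fewer} than $\epsilon n$ 2-sections are exponentially rare, so for $n$ large enough at least half of all $n$-edge unknots have $\geq\epsilon n$ 2-sections available for insertion; the $\tfrac12$ records this restriction to the ``good'' subset.
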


Note that if $L$ is a knot $K$ embeddable in $\tube^*$, inequality \eqref{finalform} gives the KE Conjecture Equation~\eqref{eqn1}, up to the term $B_K$.
If $L$ is a non-split link embeddable in $\tube^*$, the bounds of Theorem~\ref{thm-bounds} 
establish these important consequences:
{\it the exponential growth constant for embeddings of $L$ in $\tube^*$ is the same as that for unknot polygons} (as in the KE Conjecture, Equation~\eqref{eqn2}), and 
{\it the power-law
exponent increases by one for each prime factor of $L$}. 
The last statement assumes the existence of a
limit that defines a power-law 
exponent 
for the unknot in $\tube^*$. 
While the exponential growth constant $\mu_{\tube^*, 0_1}$for unknots in $\tube^*$ is known to exist (this relates to ~\eqref{eqn2}), 
the existence of the power-law exponent for unknots in $\tube^*$ 
is an open problem. 

The results presented here about the exponential growth constants and 
power-law
exponents for lattice polygons with fixed knot-type are believed to hold for 
any tube size, as well as in the limit where the tube dimensions go to infinity (see~\cite{Orl98,Rechnitzer}). Here we provide the first proofs, with \emph{$\tube^*$ being the first model for which these results have been confirmed.} 
In the case of any non-split link $L$ with 2 or more components, all of which are unknots, it is known that the exponential growth constant for embeddings of $L$ in $\mathbb{Z}^3$ (i.e.\ the whole lattice) is equal to that of the unknot~\cite{Sot99}. 
The 
tube $\tube^*$ is the first model for which there are results on the growth constants for all multi-component non-split links,
and results on how the power-law (entropic) exponent changes with the link-type.
Recent numerical evidence~\cite{Bonato_2021} for multi-component non-split links 
embedded in $\mathbb{Z}^3$ suggests that Theorem~\ref{thm-bounds} also holds in the unconfined case. 

In this paper we present the complete proof of Theorem~\ref{thm-bounds}.
To obtain the bounds in Theorem~\ref{thm-bounds} Equation~\eqref{mainrelation}, we consider ways to insert polygon  
segments 
that convert an embedding of one link-type into an embedding of another link-type.  The relevant polygon segments (blocks), the insertion process and the mathematical approach to prove the upper bound are different from those for the lower bound. 

The goal for the upper bound is to convert a  non-trivial non-split link $L$ in $\tube^*$ to an unknot polygon. We show an example of such conversion in Figure~\ref{fig1intro}A-B. 
Our approach takes advantage of the fact that all links that fit in $\tube^*$ are connected sums of 2-bridge links  \cite{minsteptube}.
We prove new knot theory results that, in combination with properties of embeddings in $\tube^*$, allow us to find $f_L$ locations in an embedding of $L$, where one can insert $f_L$
polygon segments (blocks) 
and change the embedding into an unknot. 
The binomial term of the upper bound comes from bounding the number of ways the insertion process on different initial embeddings of $L$ could yield the same unknot polygon. 

The goal for the lower bound is to convert an unknot polygon to an embedding of a non-trivial link $L$. This process is illustrated in Figure~\ref{fig1intro}{}{C-D}. 
Our approach takes advantage of the existence of a transfer matrix for polygons (regardless of knot type) in $\tube^*$. We use this, combined with properties of unknots in $\tube^*$, to establish a pattern theorem for unknots in $\tube^*$. From this, we can identify on the order of $\binom{n}{f_L}$ locations where   $f_L$ polygon blocks can be inserted in an $n$-edge unknot polygon to convert it to an embedding of $L$ in $\tube^*$.  

There are interesting connections between the scaling form established by Theorem~\ref{thm-bounds}
and the ``size'' of the linked region in polymers.  Our proof of 
Theorem~\ref{thm-bounds} leads to the proof of general pattern theorems for embeddings with fixed link-type. These general pattern theorems allow us to prove that knotting and linking is typically localized for lattice links in $\tube^*$.   We can also explore different modes of knotting and linking such as the modes identified in \cite{BEISS18,Suma17}. 

%
The paper is structured as follows. 
Section~\ref{sec:BackDef} provides the main background regarding Conjecture~\ref{mainconj}{} and the definitions needed for our results. 
We give details for the upper bound proofs in Sections~\ref{unknotting} and~\ref{upperbound}. 
Notably, Section~\ref{unknotting} includes proofs of the novel knot theory results. In Section~\ref{sec:pattern} we provide details for the lower bound proofs.  
In Section~\ref{sec:Overview} 
we bring together the results from Sections~\ref{sec:newupper} and~\ref{sec:pattern} to prove Theorem~\ref{thm-bounds}. 
In Section~\ref{sec:size} we discuss consequences of Theorem~\ref{thm-bounds}, including 
knot/link 
localization and different modes of linking.
The discussion in Section~\ref{sec:discussion} considers the impact of our results on different disciplines and potential applications to the study of DNA.  

\section{Background and definitions}\label{sec:background}
\label{sec:BackDef}

In this section we introduce some terminology and results that are needed to give a more detailed overview of the main theorems and their proofs. 
We start by introducing definitions for lattice polygons and review the evidence leading to Conjecture~\ref{mainconj}{}.  We then introduce definitions for lattice polygons in tubes and restate  Conjecture~\ref{mainconj}{} for tube models.  Finally, we focus on $\tube^*$ and review  known knot theory results regarding 4-plats. 

Since we are focusing on the simple cubic lattice (a crystallographic lattice) we start by giving a more precise definition of this lattice.  It has \textit{vertices} (0-skeleton) which are the integer points in $\mathbb{R}^3$ and its 1-skeleton is the set of \textit{edges} joining pairs of vertices unit distance apart. (Equivalently the lattice can be thought of as $(\mathbb{R}\times \mathbb{Z}\times \mathbb{Z}) \cup (\mathbb{Z}\times \mathbb{R}\times \mathbb{Z}) \cup (\mathbb{Z}\times \mathbb{Z}\times \mathbb{R})$ with  lattice vertices being the points in $\mathbb{Z}^3$ and  lattice edges being the unit length lines between pairs of vertices.) As is standard, we refer to this lattice by its vertex set $\mathbb{Z}^3$.  Similarly a sublattice of $\mathbb{Z}^3$ is denoted by its vertex set whenever the edge set is induced by the vertices.  

A \textit{self-avoiding polygon} in $\mathbb{Z}^3$ (lattice polygon or polygon, for short) 
is the image of an embedding of one simple closed curve in $\mathbb{Z}^3$.
A \textit{lattice link }
is a disjoint union of lattice polygons.
We will refer to a lattice link with link type $L$ as a lattice embedding of 
$L$, or simply as an \emph{embedding of $L$}.   
The \textit{size} of a lattice embedding of a link is defined to be the number of edges, which is always even.

For polygon enumeration, we consider two polygons in $\mathbb{Z}^3$ to be the same if they are translates of each other. Let $p_n$ be the 
number of distinct $n$-edge polygons in  $\mathbb{Z}^3$.
Hammersley \cite{Ham61} proved that $p_n = e^{ n \log\mu + \littleo(n)}$
where $\mu$ is the \emph{exponential growth constant} of the lattice; that is, Hammersley proved that the following limit that defines $\mu$ exists:
\begin{equation}
	\mu = \lim_{n\to\infty} (p_n)^{1/n}.
\end{equation}
In a similar way it can be shown \cite{Sum88} that the number of 
unknot polygons $p_n(0_1)$ with $n$ edges satisfies the equation
$p_n(0_1)=e^{n \log \mu_{0_1}  +\littleo(n)}$ and pattern theorem arguments \cite{Kesten}
can be used to show that $\mu_{0_1} < \mu$ \cite{Pip89,Sum88}. This establishes the FWD conjecture.  
Similar to Conjecture~\ref{mainconj}{}, there is evidence that 
\begin{equation}
	\label{allpolyasymptotics}
	p_n= A n^{\alpha}\mu^n(1+\littleo(1)),  ~~~~n\to\infty,
\end{equation}
where $A>0$ and $\alpha$ is called the power-law exponent or the entropic critical exponent for lattice polygons.\footnote{In the statistical mechanics literature the exponent of $n$ in~\eqref{allpolyasymptotics} is usually written as $\alpha-3$ instead of $\alpha$; for simplicity we will stick with $\alpha$ here.}  It is also expected that $\alpha$ is not lattice-dependent while $A$ and $\mu$ are. Note that, aside from the existence of the limit defining $\mu$, little is known rigorously regarding~\eqref{allpolyasymptotics}.  In particular, it has yet to be established that the limit that would define $\alpha$, $\displaystyle{\lim_{n\to\infty}  \frac{\log(p_n/\mu^n)}{\log n}}$, exists.

Now consider $p_n(K)$, the number of $n$-edge polygons with fixed knot-type $K$. For $\mathbb{Z}^3$, it has been established that every knot is embeddable \cite[Theorem 2.4]{SSW92} and similar arguments work for multicomponent links. It is then straightforward to 
show that there exists a fixed integer $b$ 
(related to the minimum size of an embedding of $K$) 
such that $p_n(0_1)\leq p_{n+b}(K)$ \cite{SSW92} but little else is known rigorously about $p_n(K)$. There is however strong numerical
evidence from Monte Carlo simulations  that Conjecture~\ref{mainconj}{} holds, i.e.\ that as $n\to\infty$, 
\begin{equation}
	\label{fixedknotasymptotics}
	p_n(K)\sim A_K n^{\alpha_{0_1} + f_K}(\mu_{0_1})^n(1+\littleo(1)).
\end{equation}
Furthermore, 
there is numerical evidence that $\alpha_{0_1}=\alpha$ and that for prime knots $K_1$ and $K_2$, the amplitude ratios $A_{K_1}/A_{K_2}$ are lattice-independent, i.e.\ ``universal'' \cite{Rechnitzer}.
However, except in the case that $K=0_1$, it has yet to be proved that $\lim_{n\to\infty} (p_n(K))^{1/n}$ even exists, although it is known that the corresponding $\liminf$ is bounded below by $\mu_{0_1}$ and the $\limsup$ is strictly less than $\mu$ \cite{SSW92}. 

In general, while it is known that $\mu_{0_1} < \mu$, there is no known approach for exactly determining either $\mu$, $\mu_{0_1}$ or their ratio $ \mu_{0_1}/\mu$, although there are estimates for these quantities based on various numerical approaches \cite{Rechnitzer,Orl98}.

Recently \cite{Bonato_2021} considered embeddings of  $k$-component non-split links in $\mathbb{Z}^3$ for any fixed $k>1$.  In the case that all components are unknots, they proved that the exponential growth constant is equal to $\mu_{0_1}$ and otherwise they obtained that the exponential growth constant, if it exists, is strictly less than $\mu$ and bounded below by  $\mu_{0_1}$.  They provide numerical evidence that is consistent with the exponential growth constant being independent of link type and a scaling form consistent with that of Conjecture~\ref{mainconj}{}.  

\subsection{Knots and links in lattice tubes}
\label{ssec:knotslinkstubes}
In this paper we make progress on proving  
the KE Conjecture, now generalized to any non-split link,  by focusing on the tubular sublattices of the simple cubic lattice.  
These have been studied previously in various contexts \cite{Alm90, atapourphdthesis, Atapour09, Atapour10, Beaton_2016, BEISS18, BES19, Eng2014Thesis, Sot98, Sot06, SoterosWhittington}. 
Unless stated otherwise, the notation and definitions used here are as in~\cite{Beaton_2016}. 

For positive integers $M_1,M_2$,  the semi-infinite sublattice of  $\mathbb{Z}^3$ induced by the vertex set 
\begin{equation}
	\{(x,y,z)\in \mathbb Z^3:x\geq0, 0\leq y\leq M_1, 0 \leq z \leq M_2\}
\end{equation}
is called the $M_1\times M_2$ \emph{tube} and denoted by \(\tubeLM\equiv\tube\subset \mathbb Z^3\). We are interested in %
lattice links in $\tube$ and 
restrict to those which occupy at least one vertex in the plane $x=0$. Let the \emph{span} $s(\pi)$ of a lattice link $\pi$ be the maximal $x$-coordinate reached by any of its vertices.

Counts of polygons by size in lattice tubes have been well-studied.
Let $p_{\tube,n}$ be the number of $n$-edge 
self-avoiding polygons in  $\tube$ which occupy at least one vertex in the plane $x=0$.  
See Figure~\ref{fig1intro}A for an embedding $\pi$ of the knot-type $3_1$ in a $2\times1$ tube with span $s(\pi)=16$ and size $|\pi|=98$.

One major advantage of focusing on  $\tube=\tube_{M_1,M_2}$ is that transfer-matrix arguments have been used to prove 
\cite{Sot98}:
\begin{equation}
	p_{\tube,n}=A_{\tube}\mu_{\tube}^n(1+ \littleo(1)).
\end{equation}
Hence in $\tube$, not only does the limit defining the exponential growth constant $\mu_{\tube}$ for polygons in $\tube$ exist,
\begin{equation}
	\mu_{\tube}=\lim_{n\to\infty} (p_{\tube,n})^{1/n},
\end{equation}
but we can actually prove that the asymptotic form of~\eqref{allpolyasymptotics} holds where, in this case, the 
power-law
exponent $\alpha_{\tube}=0$.
Furthermore, given sufficient computational resources, $A_{\tube}$ and $\mu_{\tube}$ can 
be determined to arbitrary accuracy using the eigenvalues and eigenvectors of the associated transfer-matrix. For the case
of $\tube^*$, i.e.\ $(M_1,M_2)=(2,1)$, the values are respectively 2.330946 $\times 10^{-4}$ and $\log(\mu_{\tube})=\frac{1}{0.498950}$.

Now let $p_{\tube,n}(K)$ be the number of polygons with knot type $K$ counted in  $p_{\tube,n}$. 
It is known that
$p_{\tube,n}(0_1) = (\mu_{\tube,0_1})^n e^{o(n)}$,
where $\mu_{\tube,0_1}$ is the unknot exponential growth constant given by
\begin{equation}
	\label{unknottubegrowthconstant} 
	\log \mu_{\tube,0_1}=\lim_{n\to\infty}\frac{1}{n}\log p_{\tube,n}(0_1).
\end{equation}
Further, in the $1\times 1$ tube  $\mu_{\tube,0_1}=\mu_{\tube}$ and otherwise  $0<\mu_{\tube,0_1}<\mu_{\tube}$ is known  from a pattern theorem argument \cite{Sot98}.
Monte Carlo methods have been used \cite{BES19, eng_phd_thesis} to provide strong evidence for 
the KE Conjecture
with $\alpha_{\tube,0_1}=0$ for polygons in each of the tube sizes: $2\times 1$, $3\times 1$, 
$4\times 1$, $5\times 1$, $2\times 2$ and $3\times 2$.
It is also known (by arguments analogous to those in~\cite{hammersley1985self,SSW2012}) that
\begin{equation}
	\label{unknottubegrowthconstantlimit} 
	\mu_{0_1}=\lim_{M\to\infty}\mu_{\tube_{M,M},0_1},
\end{equation}
so that proving 
the KE Conjecture
for arbitrary tube sizes could lead to a proof for the unconfined case. 

More generally,
we define $p_{\tube,n}(L)$ to be the number of $n$-edge embeddings of a non-split link type $L$ in $\tube$ having
at least one vertex in the plane $x=0$. We do not consider split links $L$,  i.e.\ two or more separable simple closed curves, since their $n$-edge embedding counts are not finite.
For general $\tube$, little is known about $p_{\tube,n}(L)$ when $L\neq 0_1$.

A first question of interest is the determination of which non-split links are embeddable in 
$\tube_{M_1,M_2}$.   
To discuss the answer,
let $h:\mathbb R^3 \to \mathbb R$ be the projection to the $x$-axis, that is $h(x,y,z)=x$.
The link invariant {\em $\trunk(L)$} is defined by
$\trunk(L)=\min_{E}\max_{t\in \mathbb R} |h^{-1}(t)\cap E|$,
where $E$ is an embedding of link $L$ in $\mathbb R^3$ \cite{minsteptube, Ozawa}.
By showing that the trunk of a link limits the size of the smallest tube that can contain it, in \cite{minsteptube}, the link types that can be confined in $\tube_{M_1,M_2}$ are characterized.  In particular the following proposition has been proved:

\begin{prop}[{\cite[Theorem 1]{minsteptube}}]
	A link type $L$ can be confined to $\mathbb{T}_{M_1,M_2}$
	if and only if $\trunk(L) <(M_1+1)(M_2+1)$.
\end{prop}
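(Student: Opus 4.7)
The plan is to prove both implications by relating a lattice embedding of $L$ in the tube to topological embeddings of $L$ realizing the trunk.

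For the necessity direction ($\Rightarrow$), suppose $L$ is realized as a lattice embedding $E \subset \mathbb{T}_{M_1,M_2}$. I would view $E$ as a topological embedding in $\mathbb{R}^3$ (after a small smoothing of its corners) and apply the definition of trunk directly to $E$. Between any two consecutive integer planes the tube contains exactly $(M_1+1)(M_2+1)$ lattice $x$-edges, so $|h^{-1}(t) \cap E|$ is bounded above by $(M_1+1)(M_2+1)$ at every generic $t$. To upgrade this to a strict inequality, I would argue that one can always modify $E$ by an ambient isotopy in $\mathbb{R}^3$ (allowed to leave the tube) to reduce the maximum count by at least one: if some slab of $E$ saturated the bound, then the part of $E$ in that slab is a trivial sub-braid occupying every lattice site of the cross-section, and a small isotopy that pulls one such strand out above the tube into an adjacent region yields a topologically equivalent embedding of $L$ whose maximum $x$-cross-section is at most $(M_1+1)(M_2+1) - 1$. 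This uses essentially that $L$ is closed and $E$ has finite span, so a saturated slab cannot persist for all $x$ and one always has room to perform the reduction.

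For the sufficiency direction ($\Leftarrow$), suppose $\trunk(L) \le N - 1$ where $N = (M_1+1)(M_2+1)$. I would select a topological embedding $E$ of $L$ realizing the trunk, arranged so that $h|_E$ is a Morse function with at most $N - 1$ strands at every generic level. The strategy is to discretize $E$ onto the lattice. Between two consecutive critical values of $h|_E$, assign each of the at most $N - 1$ strands to a distinct lattice column of the $(M_1+1) \times (M_2+1)$ cross-section, leaving at least one column free as ``swap space.'' Replace each smooth arc by a sequence of unit $x$-edges along its assigned column; realize each local extremum of $h|_E$ by a pair of orthogonal lattice edges forming a cap or cup; and realize each crossing of the projected diagram by a short rerouting gadget built in the free column using $y$- and $z$-edges.

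The main obstacle is this second direction: one must show that the free lattice column (together with nearby free sites) is always enough capacity to realize the Morse data of $E$ as explicit lattice gadgets, without collisions. This is precisely where the strictness $\trunk(L) < (M_1+1)(M_2+1)$ is essential, since equality would leave no room to reroute strands past each other, while strict inequality guarantees at least one free site per cross-section. To close the argument, I would exhibit a bounded-size lattice realization for each local Morse move (cap, cup, crossing) and then concatenate these gadgets in order along $E$. The braid $s$-block construction used later in the paper (Proposition~\ref{prop:braidblock}) is a concrete instance of exactly the kind of lattice realization required, and an analogous case-by-case construction handles the general situation.
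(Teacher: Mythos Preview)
The paper does not prove this proposition. It is quoted as Theorem~1 of \cite{minsteptube} and stated purely as background; no argument for it is given or even sketched in the present paper. There is therefore no ``paper's own proof'' to compare your proposal against --- if you want to check your outline you should consult \cite{minsteptube} directly.

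On your sketch itself: the sufficiency direction is a reasonable plan, and using the at-least-one free cross-sectional site as routing space for caps, cups and crossings is indeed the crux. The necessity direction, however, has a real gap at the strict inequality. From the lattice embedding you only get $\max_t |h^{-1}(t)\cap E| \le (M_1+1)(M_2+1)$, and your proposed fix --- isotope one strand of a saturated slab ``out above the tube'' --- is not obviously harmless: you must ensure the rerouted strand does not raise the intersection count at some other level, and you have not argued why this can always be arranged. Turning that into an honest proof requires more than you have written.
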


Note that any non-trivial non-split link has trunk greater than 3 \cite{Ozawa}, so that $\mathbb{T}^*=\mathbb{T}_{2,1}$ is the smallest tube that admits non-trivial non-split links.

Arguments analogous to those of \cite{Bonato_2021} can be used to establish that for $L$ with all components being unknots, $p_{\tube,n}(L)=(\mu_{\tube,0_1})^n e^{o(n)}$.
Based on the numerical evidence available for knots in tubes, we make the following conjecture for any tube size and any non-split link.

\begin{conj}[KE Conjecture for Lattice Tubes \cite{BES19}]\label{mainconjtube}
	For a given $\tube=\tube_{M_1,M_2}$ and any given non-split link $L$ embeddable in $\tube$, there exist constants (independent of $n$ but potentially dependent on $M_1,M_2$ and $L$) $A_{\tube,L}$, $\alpha_{\tube,L}$ and $\mu_{\tube,0_1}$, such that $p_{\tube,n}(L)$  satisfies the following asymptotic equation:
	\begin{equation}
		p_{\tube,n}(L)=A_{\tube,L}n^{\alpha_{\tube,L}}(\mu_{\tube,0_1})^n(1+\littleo(1)),  ~~~n\to\infty,
	\end{equation}
	where it is expected that $\alpha_{\tube,0_1} = 0$ and that: the \textbf{amplitude} $A_{\tube,L}>0$; 
	(a) the \textbf{exponential growth constant} 
	$\displaystyle{ \mu_{\tube,0_1}>0; }$ 
	and
	(b) the \textbf{power-law
 exponent}
	$\displaystyle{\alpha_{\tube,L}=\alpha_{\tube,0_1}+f_L.}$
\end{conj}

To discuss embeddings in tubes further, some additional definitions are needed. 
These definitions are as in~\cite{BEISS18} but generalized to embeddings of non-split links. For simplicity, unless stated otherwise, the term \textit{embedding} will henceforth refer to any embedding of a non-split link in $\tube$.
Given an embedding $\pi$ in $\tube$ and $k\in\mathbb Z$, a \emph{hinge} $H_k$ of $\pi$ is the set of edges and vertices lying in the intersection of $\pi$ and the $y$-$z$ plane defined by $\{(x,y,z):x=k\}$. A \emph{section} $S_k$ is the set of edges in $\pi$, in the $x$ direction, connecting $H_{k-1}$ and $H_k$. A \emph{half-section} of $S_k$ is the set of half-edges in $S_k$ with either $k-1\leq x\leq k-\frac12$ or $k-\frac12\leq x\leq k$. Any section of an embedding which contains exactly $r$ edges is called an {\it $r$-section} of the embedding.

A \emph{1-block} of $\tube$ is any non-empty hinge which can occur in an embedding $\pi$ in $\tube$ together with the half-edges of $\pi$ in the two adjacent half-sections. The \emph{length} of a 1-block is the sum of the lengths of all its {embedding} edges and half-edges. It is thus natural to view a 1-block as the part of an embedding between two half-integer $y$-$z$ planes $x=k\pm\frac12$ for some $k\in\mathbb Z$.

An \emph{$s$-block} is then 
defined to be 
any connected sequence of $s$ 1-blocks, the entirety of which can occur in an embedding in $\tube$. (It is also possible, if the first and last half-sections of the $s$-block are empty, for the $s$-block itself to be an embedding.) The length of an $s$-block is the sum of the lengths of its constituent 1-blocks. 
See Figure~\ref{fig1intro}{}{B} and {D} 
for examples of a 3-block and 7-block respectively.

If $\sigma$ is an $s$-block whose first and last half-sections each contain exactly two half-edges, 
we call $\sigma$ a {\it connected sum pattern}.  Given any connected sum pattern $\sigma$, we can derive an embedding $\pi_{\sigma}$ in $\tube$ from $\sigma$ by first extending each half-edge into a full edge and then joining the two endpoints in the left-most hinge, and then the two endpoints in right-most hinge, using a shortest path in the tube. If the link-type  of $\pi_\sigma$ is  $L\neq 0_1$ (the unknot), then we say that $\sigma$ is a {\it link-pattern} (we may also sometimes say \emph{knot-pattern}) and otherwise it is an {\it unknot-pattern}.  For example, Figure~\ref{fig1intro}{}{D} 
shows  a connected sum pattern $\sigma$ which is a knot-pattern because $\pi_\sigma$ has knot-type $K=3_1$ (the trefoil).  Similarly, if $\sigma$ is an $s$-block whose first (last) half-section is empty and its last (first) half-section contains exactly two half-edges, 
with all other sections containing more than two edges, we call $\sigma$ a {\it start connected sum pattern}  ({\it end connected sum pattern}).  Following the procedure for connected sum patterns,  start and end patterns can also be classified as either link or unknot patterns.  
By dividing at each 2-section, an embedding $\pi$ in $\tube$ with $k\geq 1$ 2-sections can be decomposed into  a start connected sum pattern, a  sequence of $k-1$ connected sum patterns and then an end-connected sum pattern (see for example Figure~\ref{fig1intro}{}{C}, 
which shows an unknot polygon which has $k=8$ 2-sections (indicated by the shaded planes));
hence, using the embeddings derived from each pattern, $\pi$ can be viewed as a topological connected sum of embeddings.   In particular, if $\pi$ is a polygon, then the knot-type of $\pi$ is completely determined by the knot-types of the polygons in this connected sum. More generally, if $\pi$ is an embedding of a non-split link $L$, then at most $f_L$ of the embeddings derived from the connected sum patterns are not unknot polygons. For any $\tube$ it has been established that all prime links embeddable in the tube have a corresponding link-pattern in the tube. 
This gives the following Proposition.

\begin{prop}[{\cite[Result 4]{BEISS18}}]
\label{prop:knotpattern}
For each prime link $L$ embeddable in $\tube$, there is a  link-pattern of $L$.
\end{prop}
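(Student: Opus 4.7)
The plan is to construct a link pattern for $L$ starting from an arbitrary lattice embedding of $L$ in $\tube$, by arranging 2-sections at its extreme ends and then extracting a suitable sub-embedding. By hypothesis there is an embedding $\pi$ of $L$ in $\tube$; after translation I may assume $\pi$ spans $0\leq x\leq s$ with $s=s(\pi)$.

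First I would modify $\pi$ by attaching short straight-tube extensions on each of its extreme ends. In each case the goal is to splice in a pair of disjoint parallel straight edges running in the $\pm x$-direction, continuing until the new extreme section consists of precisely these two parallel edges, i.e.\ a 2-section. Since $L$ is a non-trivial prime link, the classification of links embeddable in lattice tubes \cite{minsteptube} guarantees that $\tube$ has cross-section large enough to accommodate two disjoint parallel straight strands. Consequently the extension can always be carried out while preserving self-avoidance, introduces no new crossings in any planar projection, and thus preserves the link type. The resulting embedding $\pi'$ of $L$ has 2-sections at both of its extreme planes.

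Next I would extract from $\pi'$ a sub-embedding in the form required by the definition of a \emph{connected sum pattern}, i.e.\ one whose start and end are each half of a 2-section. Concretely, cut $\pi'$ at one of its terminal 2-sections; the resulting sub-embedding is a connected sum pattern whose canonical closure — reconnecting the free ends through an arc outside the tube — reconstructs exactly the straight extension just removed, and hence yields a lattice embedding of $L$ again. Because $L$ is prime, and therefore non-trivial and non-split, this pattern qualifies as a \emph{link pattern} for $L$, as required.

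The main technical obstacle is showing that the parallel straight-tube extensions can always be attached to an arbitrary $\pi$ within $\tube$ without altering the link type. This reduces to a short case analysis of the cross-section of $\tube$ together with the possible configurations of vertices and edges of $\pi$ at its extreme columns. It is feasible because any $\tube$ admitting a non-trivial link necessarily has cross-section large enough to contain two disjoint parallel straight strands, so a local rerouting at each extreme column is always available. Once this local surgery is in hand, the construction above goes through uniformly for every prime link $L$ embeddable in $\tube$.
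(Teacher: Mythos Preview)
The paper does not prove this proposition; it is quoted as Result~4 from~\cite{BEISS18}, so there is no in-paper argument to compare against.

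Your approach is sound in outline and would work, but a few points deserve tightening. First, a connected sum pattern by definition begins \emph{and} ends with half of a 2-section, so you must cut $\pi'$ at \emph{both} terminal 2-sections, not ``one of its terminal 2-sections'' as you write. Second, the justification you give for why the extension is always possible (``$\tube$ has cross-section large enough to accommodate two disjoint parallel straight strands'') is not the relevant fact. What you actually need is that each extreme hinge of $\pi$ contains at least one lattice edge of $\pi$; this holds because every vertex in the plane $x=0$ has degree two in $\pi$ and at most one incident edge in the $+x$ direction, hence at least one edge within the hinge. A single BFACF $+2$-move in the $-x$ direction on that hinge edge produces the desired terminal 2-section (and likewise on the right), and BFACF moves are known to preserve link type. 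Third, the claim that the canonical closure $\pi_\sigma$ recovers link type $L$ is correct but should be justified: the closing-off procedure extends the two half-edges to full edges and joins their endpoints in the new extreme hinge by a shortest path, and since those endpoints are adjacent (they were the endpoints of the edge you pushed out), the shortest path is a single edge, reproducing $\pi'$ exactly. With these clarifications your construction yields a genuine link-pattern of~$L$.
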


 Figure~\ref{fig1intro}{C} and {D} 
 illustrates how a $3_1$ %
 link pattern
 can be inserted at a 2-section of 
 an unknot polygon to convert it into a $3_1$ polygon. An unknot polygon with at least one 2-section can be decomposed 
 into the connected sum of two unknots. Any link pattern can then be ``inserted'' in between, 
 as in Figure~\ref{fig1intro}{D}. 
 In the next subsection we present implications of this for $\tube^*$.

\subsection{4-plats, 4-braids and the tube \texorpdfstring{${\mathbb{T}^*}$}{T*}}
\label{subsec:4-plats}

Our results rely heavily on the following proposition. Here we provide the needed background.

\begin{prop}{\cite[Corollary 2]{minsteptube}}\label{prop:which_knots_fit}
	If a link type $L$ can be embedded in $\mathbb{T}^*$ then each prime factor of $L$ is a $4$-plat. 
	
	Furthermore, there is an embedding of $L$ in  $\mathbb{T}^*$ which consists of a connected sum of $f_L$ link-patterns, one corresponding to each prime factor of $L$.
\end{prop}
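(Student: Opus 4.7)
The proposition has two parts: (i) every prime factor of an embeddable link $L$ is a 4-plat, and (ii) $L$ admits an embedding in $\tube^*$ as a connected sum of $f_L$ link patterns. I would treat them separately, with the substantive work concentrated in (i).

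For (i), start with a polygon $\pi$ in $\tube^* = \tube_{2,1}$ of link-type $L$. The plane $x = k - \tfrac{1}{2}$ cuts only $3 \times 2 = 6$ tube edges, so every section $S_k$ of $\pi$ contains between $2$ and $6$ edges (always even), giving $|S_k| \in \{2,4,6\}$. The plan is first to isotope $\pi$ within $\tube^*$ so that it has 2-sections separating its distinct prime summands, and then to argue that cutting along such a 2-section realises a connect-sum decomposition at the topological level. Between two consecutive 2-sections the polygon forms a ``block'' whose internal sections all have size $4$ or $6$. The key structural claim is that every such block, closed off by the two half-2-sections at its ends, is a 4-plat: because the tube has depth $1$ in the $z$-direction and width $2$ in the $y$-direction, the strands through any section can be linearly ordered, and the block can be straightened to a 4-braid capped by two cups and two caps, which is exactly a 4-plat presentation.

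For (ii), once (i) is in hand the construction is direct. By Proposition~\ref{prop:knotpattern}, for every prime factor $L_i$ of $L = L_1 \# \cdots \# L_{f_L}$ there is a link pattern $P_i$ in $\tube^*$, each of whose ends is half of a 2-section. I would concatenate the $P_i$ in order by translating $P_{i+1}$ (and, if needed, reflecting it) so that its left half-2-section matches the right half-2-section of $P_i$ to form a single 2-section. Capping off the two outer ends with the standard arcs supplied by their 2-section structure yields a polygon in $\tube^*$ whose link type is $L_1 \# \cdots \# L_{f_L} = L$, decomposed as the required connected sum of $f_L$ link patterns.

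The main obstacle I anticipate is the structural claim in (i) that each inter-2-section block is a 4-plat. This is delicate because blocks may legitimately contain 6-sections: the tube is wide enough to admit genuinely six-stranded cross-sections, so one cannot simply rule them out. Instead one must show that every such 6-section can, inside $\tube^*$, either be isotoped to a section of size at most $4$ or absorbed into the 4-braid part of a 4-plat normal form. Making this precise would likely proceed by enumerating the possible local strand configurations at a 6-section and exhibiting, in each case, an explicit tube-admissible move that reduces the section size, eventually producing a diagram with only 2- and 4-sections, which is manifestly a 4-plat.
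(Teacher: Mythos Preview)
The paper does not prove this proposition itself but cites it from \cite{minsteptube}, where the argument goes through the \emph{trunk} invariant: an embedding in $\tube^*$ forces $\trunk(L)<6$, hence $\trunk(L)\le 4$ by parity; since trunk behaves well under connected sum, each prime factor $L_i$ also has $\trunk(L_i)\le 4$, and a nontrivial prime link of trunk at most $4$ is a $2$-bridge link, i.e.\ a $4$-plat.

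Your direct geometric route is different and has a real gap in part~(i). The claim that a block between consecutive $2$-sections ``can be straightened to a $4$-braid capped by two cups and two caps'' assumes the strands in the block are monotone in $x$. But a block whose interior sections all have size $4$ can still contain local $x$-extrema --- what the paper's Appendix calls \emph{hidden $2$-sections} --- and such a block need not be a $4$-plat; it can be composite. The same problem undermines your opening move, ``isotope $\pi$ so that it has $2$-sections separating its distinct prime summands'': an arbitrary embedding may have no $2$-sections at all, and those it does have need not align with the prime decomposition. The repair is to cut at hidden $2$-sections as well and then argue (as in Lemma~\ref{lem:shifted_4plat}) that a block with neither $2$-sections nor hidden $2$-sections really does have monotone strands and hence closes to a $4$-plat; this is essentially the content of Lemma~\ref{f_L}, though the paper uses it for a different purpose. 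By contrast, the $6$-section issue you identify as the main obstacle is actually the easy part: a $6$-section saturates every vertex of both adjacent hinges, so runs of $6$-sections are just six parallel strands and collapse away trivially (cf.\ Proposition~\ref{6-string}).

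Part~(ii) is fine once~(i) is established.
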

\begin{cor}\label{cor:unknot_insert_factors}
	Consider a link $L$ embeddable in $\tube^*$. 
	Any unknot polygon with at least one 2-section can be converted to an embedding of $L$ by $f_L$ insertions of   link-patterns
	at one or more of the 2-sections of the polygon. 
\end{cor}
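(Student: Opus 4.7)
The plan is to iteratively apply the link-pattern insertion of Proposition~\ref{prop:knotpattern}, inserting one link-pattern per prime factor of $L$.

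First, decompose $L = L_1 \# \cdots \# L_{f_L}$ as a connected sum of prime link types. By Proposition~\ref{prop:which_knots_fit}, each prime factor $L_i$ is a 4-plat embeddable in $\tube^*$, so Proposition~\ref{prop:knotpattern} furnishes a link-pattern $P_i \subset \tube^*$ of link-type $L_i$, for each $1 \leq i \leq f_L$.

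Next, specify the insertion operation at a 2-section of a polygon $\pi \subset \tube^*$. At a 2-section, say at $x = k - \tfrac12$, translate the portion of $\pi$ with $x \geq k$ rightward by the span of $P_i$ and place $P_i$ in the resulting gap. Because $P_i$ begins and ends with half of a 2-section, its two endpoints glue cleanly to the two cut halves of the original 2-section, producing a valid polygon in $\tube^*$. By the defining property of a link-pattern, the resulting polygon's link-type is the connected sum of the link-type of $\pi$ with $L_i$.

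Finally, iterate. Starting from the given unknot polygon $\pi_0$ (which has a 2-section by hypothesis), insert $P_1$ to obtain a polygon $\pi_1$ of link-type $0_1 \# L_1 = L_1$. Crucially, $\pi_1$ still contains 2-sections, namely the two boundary 2-sections contributed by the inserted $P_1$ itself. Insert $P_2$ at one of these to obtain $\pi_2$ of link-type $L_1 \# L_2$, and so on. After $f_L$ insertions, $\pi_{f_L}$ has link-type $L_1 \# \cdots \# L_{f_L} = L$, as claimed.

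The only point requiring care is verifying that each insertion leaves at least one 2-section available for the next. This follows immediately from the definition of a link-pattern as a sub-embedding starting and ending with half of a 2-section: after gluing in $P_i$, each of its two boundaries becomes a 2-section of the enlarged polygon, so the iteration never runs out of insertion sites.
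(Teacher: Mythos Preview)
Your argument is correct and matches the paper's (implicit) reasoning: the corollary is stated immediately after Proposition~\ref{prop:which_knots_fit} without a separate proof, and the intended justification is exactly the iterative insertion of one link-pattern per prime factor that you describe. One small imprecision worth flagging: the half-edges bounding $P_i$ need not occupy the same $(y,z)$ positions as the two edges in the target 2-section, so in general a few padding edges are required to connect them (cf.\ the caption of Fig.~\ref{fig1intro}G, which notes the pattern ``could be modified at its ends'', and the remark in Section~\ref{sec:proof} that ``additional edges may be needed''); this does not affect the logic of your proof.
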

Thus, for $\tube^*$ we know how to convert an unknot polygon with 2-sections into an embedding of a link $L$. 
For the lower bound in Theorem~\ref{thm-bounds}, we need to know that most unknot polygons contain a sufficient number of 2-sections. In Section~\ref{sec:newlower} we prove this for $\tube^*$.

For the upper bound in Theorem~\ref{thm-bounds}, we obtain new results about 4-plats. Note first that 4-plats are links defined as closures of 4-braids. The family of 4-plats is the same as that of 2-bridge links. 4-plats can be represented by 4-plat diagrams
(reviewed in \cite{BZ13}).

A {\it $4$-braid} can be defined as four disjoint strings in a rectangular cuboid, where the strings start at four points in the left ($x=0$)  face of the cuboid 
and end at four points in the opposite (right) face. Each string is required to run strictly rightwards, i.e.\ for any $c$ the string meets the plane $x=c$ at most once. 
The $4$-braid is studied using a {\it $4$-braid diagram} obtained by projecting the braid onto the $xy$-plane and resolving over and under crossings. 
A $4$-braid diagram with no crossings is said to be {\it trivial}. Any $4$-braid, except the trivial one, can be obtained by joining {\it elementary braids}
$\sigma_1,\sigma_2,\sigma_3, \sigma_1^{-1},\sigma_2^{-1}$ and $\sigma_3^{-1}$ 
(see Figure~\ref{fig:artin_and_4-plat_closure}). 
A sequence of the letters $\sigma_i^{\pm1}$, 
called a {\it $4$-braid word}, represents a $4$-braid (and its corresponding 4-braid diagram). The {\it empty word} with no letters represents the \textit{trivial braid}. The sequence of letters 
used for a braid word is written in simple exponential form, say $\sigma_i^a$ or $\sigma_i^{-a}$. 
Two $4$-braids are equivalent if they are related by ``level preserving'' isotopies, but they may be represented by different $4$-braid diagrams and braid words. In particular, we consider that a reducible word, $\sigma_i^a\sigma_i^{-b}$ or $\sigma_i^{-b}\sigma_i^{a}$ ($a,b\in\mathbb{N}$), is different from the reduced one, $\sigma_i^{a-b}$. 
There are two ways to close a $4$-braid diagram at each end 
to form a $4$-plat diagram; these are 
denoted by $[_1,[_2$ on the left and by $]_1,]_2$ on the right, as in Figure~\ref{fig:artin_and_4-plat_closure}. 
Such a closure of the corresponding braid word $w$ results in a {\em $4$-plat diagram} $[_i w]_j$, where $i,j\in\{1,2\}$.
For example, the $4$-plat diagram $[_1\sigma_1^{-1}\sigma_1\sigma_3\sigma_2\sigma_3^{-2}\sigma_2^{-1}\sigma_3^{-1}\sigma_3\sigma_2^{-1}\sigma_3^{-1}\sigma_3\sigma_1]_1$ depicted in Figure~\ref{fig2}C represents the knot $3_1$.

\begin{figure}[htb]
\begin{center}
 \includegraphics[width=\textwidth]{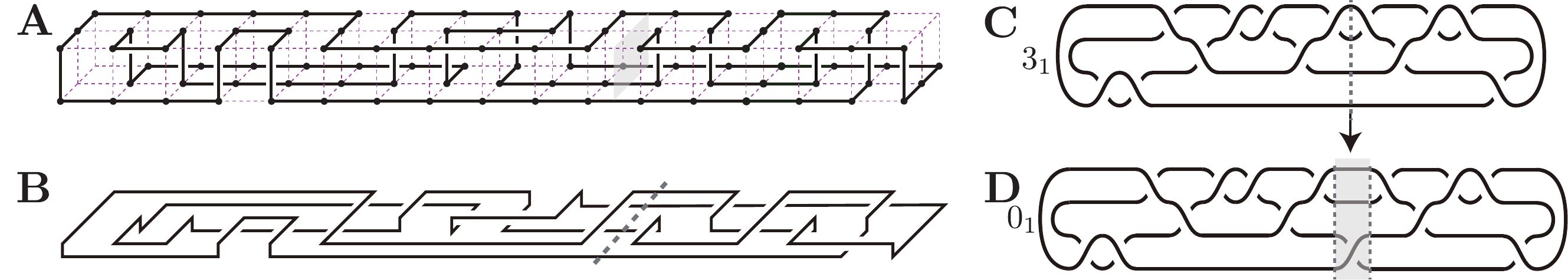}
 \caption{
 {\bf A.} A 98-edge polygon of knot-type $3_1$ (trefoil) in $\tube^*$; 
 {\bf B.}~A shifted knot diagram obtained from a 2-dimensional projection of the polygon in (A); and {\bf C.}~The  4-plat diagram corresponding to the knot in (A) and the knot diagram in (B). 
 {\bf D.}~A 4-plat diagram of the unknot obtained by inserting a half twist at the location indicated with a dotted line in (B) and in (C). Figures (A) to (D) have been modified from Fig. 1 in \cite{beaton_first_2024}. Figures (A) to (D) have been modified from 
 \cite[Fig. 1]{beaton_first_2024}.
}
 \label{fig2}
 \end{center}
 \end{figure}

Formally, a $4$-braid is a 3-dimensional object, a $4$-braid diagram is a 2-D projection of a 4-braid with resolved over- and under-crossings and is described by a $4$-braid word. All three terms are used to describe the same 3-dimensional object and we may use them interchangeably.

\begin{figure}[htb]
\centering
\includegraphics[width=0.48\textwidth]{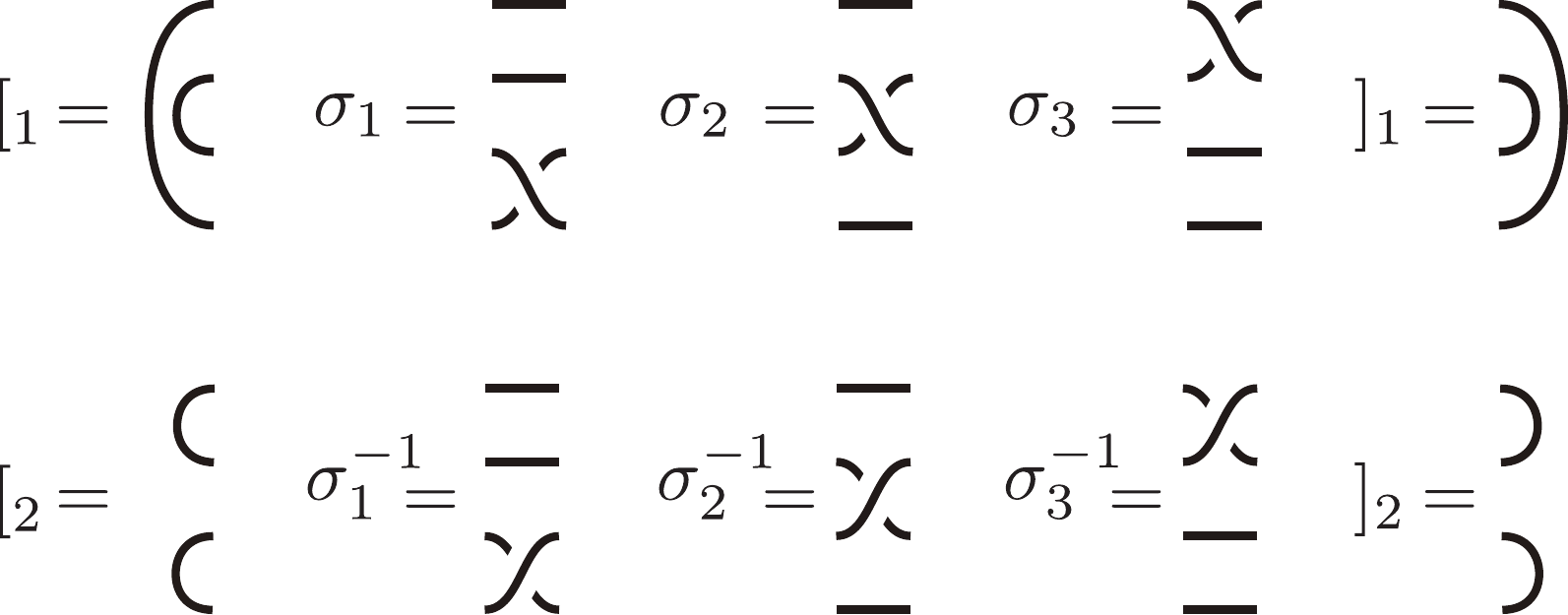}
 \caption{The diagrams in this figure illustrate the elementary $4$-braids $\sigma_1,\sigma_2,\sigma_3, \sigma_1^{-1},\sigma_2^{-1}$ and $\sigma_3^{-1}$ (middle), and the  closures $[_1$, $[_2$, $]_1$,  $]_2$.}
\label{fig:artin_and_4-plat_closure}
\end{figure}

Given a 4-braid word $w$, its  {\em reverse word} $\overline{w}$ is obtained by reversing the order of the elementary braids in $w$. 
The {\it inverse word} $w^{-1}$ of $w$ is obtained by reversing the order of letters and interchanging $\sigma_i$ and $\sigma_i^{-1}$. For example, $(\sigma_1\sigma_2\sigma_3^{-2})^{-1}=\sigma_3^2\sigma_2^{-1}\sigma_1^{-1}$. Note that the words $ww^{-1}$ and $w^{-1}w$ represent braids that are equivalent to the trivial braid. 
By convention, a $4$-braid word that does not contain any $\sigma_3^{\pm1}$ 
is called a {\em $3$-braid word} since one string in the braid diagram has no crossings. 
The {\em flipped word} $\widehat{w}$ of a 3-braid word $w$ is obtained by interchanging the 
1's and 2's 
in the subscripts of the elementary braids in $w$. 

To connect to lattice embeddings, we need two intermediate results.
First, we need a way to go from embeddings in $\tube^*$ to 4-plat diagrams.
We obtain a \emph{shifted diagram}
from the embedding, as described in \cite[Definition~3]{minsteptube} and illustrated in
Figure~\ref{fig2}B.
In the proof of Theorem~\ref{thm-insertion} we establish how to get 4-plat diagrams, one for each factor in the link-type. 

Second, 
we use arguments as in \cite{minsteptube} to construct an
$s$-block 
corresponding to 
any sequence of $\sigma_i$'s and hence any $4$-braid. In general, an $s$-block is a piece of a lattice link between two half-integer planes a distance $s$ apart (Section~\ref{ssec:knotslinkstubes}).
An $s$-block 
corresponding to a 4-braid is called a
\emph{braid $s$-block}, or \emph{braid block}.
Figure~\ref{fig1intro}B 
shows 
a braid $3$-block.

In the next section we present the results needed to obtain the upper bound of Theorem~\ref{thm-bounds}. The main knot theory results are proved in Section~\ref{unknotting}.
%
The connection between the knot theory and lattice embeddings is made in Section~\ref{upperbound}.  

\section{Upper bound: Unknotting lattice links via the unknotting of 4-plat diagrams}

\label{sec:newupper}

Towards establishing the \textit{upper bound} of Theorem~\ref{thm-bounds}, in this section we show how to transition between an embedding of a non-split link $L$ in $\tube^*$
and a corresponding set of $4$-plat diagrams, 
one for each prime factor of $L$. 
The ultimate goal is to ``unknot''  the embedding by a sequence of $f_L$ insertions of braid blocks. 
Specifically, we obtain the following theorem. 
\begin{thm}\label{thm-insertion}
	Any lattice embedding  of  
	a non-split link 
	$L$ in $\tube^*$ can be
	converted to a lattice polygon  of the unknot in $\tube^*$ by  
	$f_L$
	insertions of braid $s$-blocks.
	The span  ($s$) is bounded above by $3c+8$, where $c$ is the maximum crossing number of the prime factors of $L$.
\end{thm}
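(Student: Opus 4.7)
The plan is to reduce the lattice statement to the diagrammatic unknotting result of Theorem~\ref{thm:unknottinginsertion} and then lift the insertion back into the lattice. The bridge is that the shifted diagram of any embedding in $\tube^*$ is a 4-plat diagram (as indicated in Fig.~\ref{fig1intro}B,C), so diagrammatic insertions of 4-braid sub-words can be realized by lattice insertions of braid blocks in $\tube^*$.

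First I would take the given embedding $\pi$ of $L$ in $\tube^*$ and form its shifted diagram $D$, a 4-plat diagram representing $L$. By Proposition~\ref{prop:which_knots_fit}, $L$ is a connected sum of $f_L$ prime 4-plats, and correspondingly $D$ decomposes as a 4-plat connected sum of prime 4-plat sub-diagrams $D_1,\ldots,D_{f_L}$, each separated from the next by a 2-section of the shifted diagram. Equivalently, there are $f_L$ distinguished $x$-coordinate ranges along $\pi$ over which the individual prime factors are realized, and it suffices to unknot each $D_i$ by a single insertion localized to its range.

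Second, for each prime factor $K_i$ of crossing number $c_i\le c$, I would apply Theorem~\ref{thm:unknottinginsertion} to $D_i$ to obtain a 4-braid word $\beta_i$, of crossing number at most $c_i$, whose insertion into $D_i$ at a prescribed location produces a 4-plat diagram of the unknot. To realize each $\beta_i$ in the lattice, I would use the construction from \cite{minsteptube}, in which each elementary braid $\sigma_j^{\pm 1}$ is realized by a lattice braid block of constant span (specifically, span $3$); concatenating the $c_i$ elementary lattice blocks yields a braid block of span at most $3c_i\le 3c$ inside $\tube^*$. A bounded amount of additional span, at most $8$ units total across the two ends, is then needed to splice this realization into $\pi$ at the chosen $x$-coordinate: this padding routes the four strings of $\pi$ crossing the chosen section into the canonical 4-string configuration on the left face of the braid block, and mirrors the matching on the right. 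Performing one such insertion per prime factor converts $\pi$ into an embedding whose shifted diagram is a 4-plat diagram of the unknot, hence a lattice polygon of the unknot in $\tube^*$, with each inserted braid block having span $s\le 3c+8$.

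The main obstacle is the splicing step: the lattice realization of $\beta_i$ comes with a fixed canonical string configuration on its left and right boundary faces, whereas the four strings of $\pi$ crossing the chosen section may sit in any of several configurations admissible in the $2\times 1$ cross-section. This is what forces the additive constant in the span bound. I expect to need a finite case analysis of the possible 4-section configurations in $\tube^*$, showing that in each case a bounded lattice connector of span at most $4$ per side suffices to match the incoming four strings of $\pi$ to the canonical configuration on each face of the braid block, while staying self-avoiding and inside $\tube^*$. A secondary point is locating the diagrammatic insertion site in the actual lattice: one must check that each insertion location prescribed by Theorem~\ref{thm:unknottinginsertion} in $D_i$ corresponds to an $x$-coordinate in $\pi$ at which the lattice insertion can physically be performed, but this follows from the local structure of shifted diagrams in $\tube^*$ together with the freedom to slide the insertion site within the $D_i$ range.
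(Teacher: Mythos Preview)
Your overall strategy matches the paper's: reduce to Theorem~\ref{thm:unknottinginsertion} on the diagrammatic side, then use Proposition~\ref{prop:braidblock} to lift the $3$-braid insertion back into $\tube^*$ with the $3c+8$ span bound. However, there is a genuine gap in your reduction step.

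The assertion that the shifted diagram of \emph{any} embedding $\pi$ in $\tube^*$ is a $4$-plat diagram is false as stated. A $4$-plat diagram is the closure of a $4$-braid, which requires exactly four monotone strings. An embedding in $\tube^*$ may contain $6$-sections (six parallel strings), and even when all sections are $4$-sections the strings need not run monotonically in $x$: a hidden $2$-section (a $4$-section reducible to a $2$-section by a single $-2$-move) corresponds to a local $x$-extremum of a string, so the shifted diagram is not a braid closure there. The paper handles this with two preprocessing steps you are missing: first, Proposition~\ref{6-string} collapses all $6$-sections without changing link type or the availability of insertion locations; second, Lemma~\ref{lem:shifted_4plat} shows that once there are only $4$-sections and no interior hidden $2$-sections, the shifted diagram is a genuine $4$-plat diagram.

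Relatedly, your decomposition of $D$ into prime sub-diagrams $D_1,\ldots,D_{f_L}$ ``separated by $2$-sections'' is too optimistic. The prime factors of $L$ may be separated in $\pi$ by hidden $2$-sections rather than actual $2$-sections, and a single ``maximal'' $4$-section block need not carry a prime factor. The paper's Lemma~\ref{f_L} does the real work here: it locates, for each prime factor $L_i$, a sub-block $B_i$ of $\pi$ with no interior $2$-sections or hidden $2$-sections whose closed-off embedding $P_i$ has link type $L_i$ and a $4$-plat shifted diagram. Only then can Theorem~\ref{thm:unknottinginsertion} be applied factor by factor, with the insertion performed in $B_i$ (away from its endpoints, so it transfers back to $\pi$). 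Your splicing discussion and the $3c+8$ accounting are correct and coincide with Proposition~\ref{prop:braidblock}; what is missing is the structural work isolating the $P_i$'s.
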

The \emph{crossing number} of a link $L$ is a topological invariant
given by the minimal number of crossings over all its diagrams.
Figure~\ref{fig1intro}A shows a trefoil polygon in $\tube^*$ with $n=98$ edges, along with an embedding of a braid block (Figure~\ref{fig1intro}B) that, upon insertion at the identified location, converts the trefoil to an unknot polygon. Importantly, the spans of 
the braid blocks are
determined by the crossing numbers of the prime factors of $L$ and do not depend on the size of the lattice embedding of $L$.

To prove Theorem~\ref{thm-insertion} we show that any $4$-plat diagram can be changed into a diagram of the unknot by inserting a specific $4$-braid. The problem of untangling knots via local moves on diagrams is of independent interest in knot theory. 
McCoy \cite{McCoy} proved that unknotting crossings exist in any alternating diagram of an unknotting number one alternating knot.
However, in general, it is not easy to find a small set of crossing changes converting a given diagram into a diagram of the unknot.
Taniyama \cite{Taniyama} showed that for any nontrivial knot and any natural number $N$, there is a diagram of the knot where the unknotting number of the diagram is greater than or equal to $N$.
In the case of 4-plats, in Theorem~\ref{thm:unknottinginsertion} 
we show that any 4-plat diagram of a prime link can be converted to the unknot by inserting a 4-braid whose length is bounded above by the crossing number of the link.

\begin{thm}\label{thm:unknottinginsertion}
For any $4$-plat $L$,  
there exists a $3$-braid word $w_0$ such that 
any $4$-plat diagram of $L$ can be converted into a diagram of the unknot by inserting one of  $w_0,\overline{w_0}, \widehat{w_0}$ and $ \widehat{\overline{w_0}}$.
Moreover, $w_0$ can be taken so that the number of crossings of $w_0$ is at most the crossing number of $L$.
\end{thm}
The definitions of the reverse word $\overline{w_0}$ and the flipped word $\widehat{w_0}$ of $w_0$ are given in Section~\ref{subsec:4-plats}. Converting  a 4-plat diagram $D$ into a diagram $D'$ by inserting $w_0$ means that $D=[_iw_1w_2]_j$ and $D'=[_iw_1w_0w_2]_j$ for some $4$-braid words $w_1,w_2$.
For technical reasons, $w_1$ and $w_2$ are assumed to be non-empty $4$-braid words.

Let us consider some concrete examples. In the case of the knot  
$7_6$, $w_0=\sigma_1^{-2}$ satisfies the condition 
in Theorem~\ref{thm:unknottinginsertion}. Namely, any given $4$-plat diagram of 
$7_6$ can be converted into a diagram of the unknot by inserting either $\sigma_1^{-2}$ ($=w_0=\overline{w_0}$) or $\sigma_2^{-2}$ ($=\widehat{w_0}= \widehat{\overline{w_0}}$). 
Similarly, for $L=6^2_3$ the 3-braid $\sigma_2^{-1}\sigma_1^{-1}$ satisfies the condition of $w_0$,
see Figure~\ref{fig:4-platDiagram}. 
For an arbitrary link type $L$, there can be many options for $w_0$. In fact, not only $\sigma_1^{-2}$ but also $\sigma_1$ and $\sigma_1^{-1}$ satisfy the condition of $w_0$ for $L=3_1$. 

In Section~\ref{unknotting}, we will show that the 3-braid word $w_0$ is found in a minimal-crossing 4-plat diagram of $L$ (Lemma~\ref{lem:minimalunknotting}), and prove Theorem~\ref{thm:unknottinginsertion} from a more general theorem (Theorem~\ref{thm:twist2}). 
Theorem~\ref{thm:twist2} establishes that if a link $L$ is obtained by inserting a 3-braid word $w_0$ into a minimal diagram of a 4-plat, then $L$ can also be obtained by inserting one of $w_0,\overline{w_0}, \widehat{w_0}$ and $ \widehat{\overline{w_0}}$ into any other diagram of the 4-plat. Theorem~\ref{thm:unknottinginsertion} will then follow by taking $L$ to be the unknot. A key element of the proof is the definition of seven types of 4-plat diagram moves, and the proof that they preserve the link type (Lemma~\ref{lem:A1-4}). Further lemmas establish how diagrams related by these moves are affected by insertions of one of $w_0,\overline{w_0}, \widehat{w_0}$ and $ \widehat{\overline{w_0}}$.  
These novel results about 4-plats are presented in full detail in Section~\ref{unknotting}.

 \begin{figure}[htb]
\begin{center}
 \includegraphics[width=.58\textwidth]{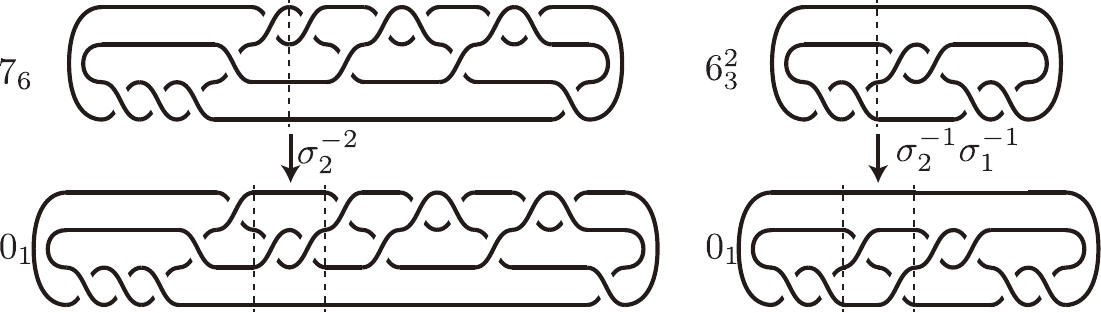}
 \caption{Insertions of $w_0=\sigma_2^{-2}$ and $w_0=\sigma_2^{-1}\sigma_1^{-1}$ on $4$-plat diagrams. These insertions are used to obtain diagrams of the unknot.}
\label{fig:4-platDiagram}
  \end{center}
 \end{figure}

To make the connection to lattice links we establish the following result. %
\begin{prop}
\label{prop:braidblock}
Let $w_0$ be a $3$-braid word with $c$ crossings. 
There is a span-$3c$ braid block in $\tube^*$ representing $w_0$.  Moreover, for each type of $4$-section there is a braid block in $\tube^*$ with span at most $3c+8$ such that it represents $w_0$ and can be inserted into the $4$-section.
\end{prop}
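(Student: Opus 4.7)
The plan is to prove the two claims in Proposition~\ref{prop:braidblock} separately. The first is a direct lattice construction; the second is a case analysis on the boundary data of a 4-section.

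For the span-$3c$ claim, I would build the block by concatenation of elementary pieces, one per letter of $w_0$. Using the lattice-embedding techniques of \cite{minsteptube}---the same ideas that produced the span-$3$ realization of a single elementary braid shown in Fig.~\ref{fig1intro}E---each letter $\sigma_i^{\pm 1}$ of $w_0$ is realized as a span-$3$ sub-block in $\tube^*$ in which exactly two of the four lattice strands swap while the remaining strands run straight through, all the while remaining self-avoiding inside the $2\times 1$ cross-section. Because $w_0$ is a $3$-braid word, one of the four strands of the ambient $4$-braid never participates in a crossing, so it can be routed as a straight parallel line through every one of the $c$ elementary blocks. Abutting these $c$ span-$3$ blocks along the $x$-axis yields a braid block of total span $3c$ representing $w_0$.

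For the \emph{moreover} claim, I need to match arbitrary 4-section boundary data. The braid block constructed above has a specific cross-sectional entry and exit configuration, which in general differs from that of a given 4-section $S$ of the ambient polygon. I would enumerate the finitely many 4-section types in $\tube^*$---each determined by which $4$ of the $6$ cross-section vertex columns host edges and how those edges pair up into strands---and for every such type exhibit a crossing-free lattice \emph{adaptor} sub-block of span at most $4$ that reroutes the strands from the configuration of $S$ to the standard entry/exit configuration of the braid block. Sandwiching the span-$3c$ braid block between two such adaptors (one per side) then produces a block of total span at most $3c+8$ which represents $w_0$ and can be inserted directly into $S$.

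The main obstacle will be the case analysis for the adaptor sub-blocks: since $\tube^*$ is the narrowest tube admitting nontrivial knots, the strands have very limited room to permute without introducing transverse crossings or violating self-avoidance. The needed verification---that some span-$\le 4$ crossing-free rearrangement realizes every required permutation of strand positions in $\tube^*$---can be carried out either by an explicit case-by-case construction for each 4-section type or, more systematically, via a short argument on the cross-sectional state graph used for the transfer-matrix calculations in Section~\ref{sec:background}. The $+8$ bound then arises as the worst case of $+4$ per side, and for many common 4-section types strictly fewer layers will suffice.
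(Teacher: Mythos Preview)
Your proposal is correct and follows essentially the same route as the paper: construct each elementary $\sigma_i^{\pm1}$ as a span-$3$ block in $\tube^*$ with identical standard $4$-sections at both ends (so concatenation is immediate), and then flank the resulting span-$3c$ block by trivial-braid adaptor blocks of span $\le4$ on each side to match any of the $\binom{6}{4}=15$ possible $4$-section types. The paper carries out exactly this plan by explicit picture (Fig.~\ref{fig:braidblocks}), rather than via the state-graph alternative you mention; the only point you leave slightly implicit that the paper makes explicit is that the elementary span-$3$ blocks must all share the \emph{same} boundary $4$-section, which is what allows them to be abutted freely.
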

The proof of Proposition~\ref{prop:braidblock} is constructive and uses similar arguments to those of~\cite{minsteptube}; full details are given in Section~\ref{upperbound}. 

This proposition allows us to connect Theorem~\ref{thm:unknottinginsertion} to Theorem~\ref{thm-insertion} provided that we are able to associate 4-plat diagrams to lattice embeddings.
For the latter, we determine sufficient conditions for the shifted diagram of an embedding of link $L$ in $\tube^*$ to be a 4-plat diagram (Lemma~\ref{f_L}). We then show in Lemma~\ref{lem:concat_no_2strings} that we can divide an embedding of $L$ into a sequence of link embeddings, one for each prime factor of $L$, and each with an associated 4-plat diagram.  The resulting diagrams allow for the direct identification of the insertion points for the braid blocks of Proposition~\ref{prop:braidblock} in the original embedding of $L$. 
Full details are given in Section~\ref{upperbound}. 

\subsection{Unknotting of 4-plat diagrams: Insertions of 4-braid words into 4-plat diagrams}

\label{unknotting}

In this section, we consider knots and links which are obtained by inserting a $4$-braid word $w_0$ 
 into a 4-plat diagram, and will prove Theorem~\ref{thm:unknottinginsertion}. %

Before discussing the proof, we define the 
$\mathcal{A}$ and $\mathcal{B}$ moves on 4-plat diagrams in Figure~\ref{fig:ABmoves}. These moves are essential to the proof. 

 \begin{figure}[htb]
\begin{center}
 \includegraphics[width=.8\textwidth]{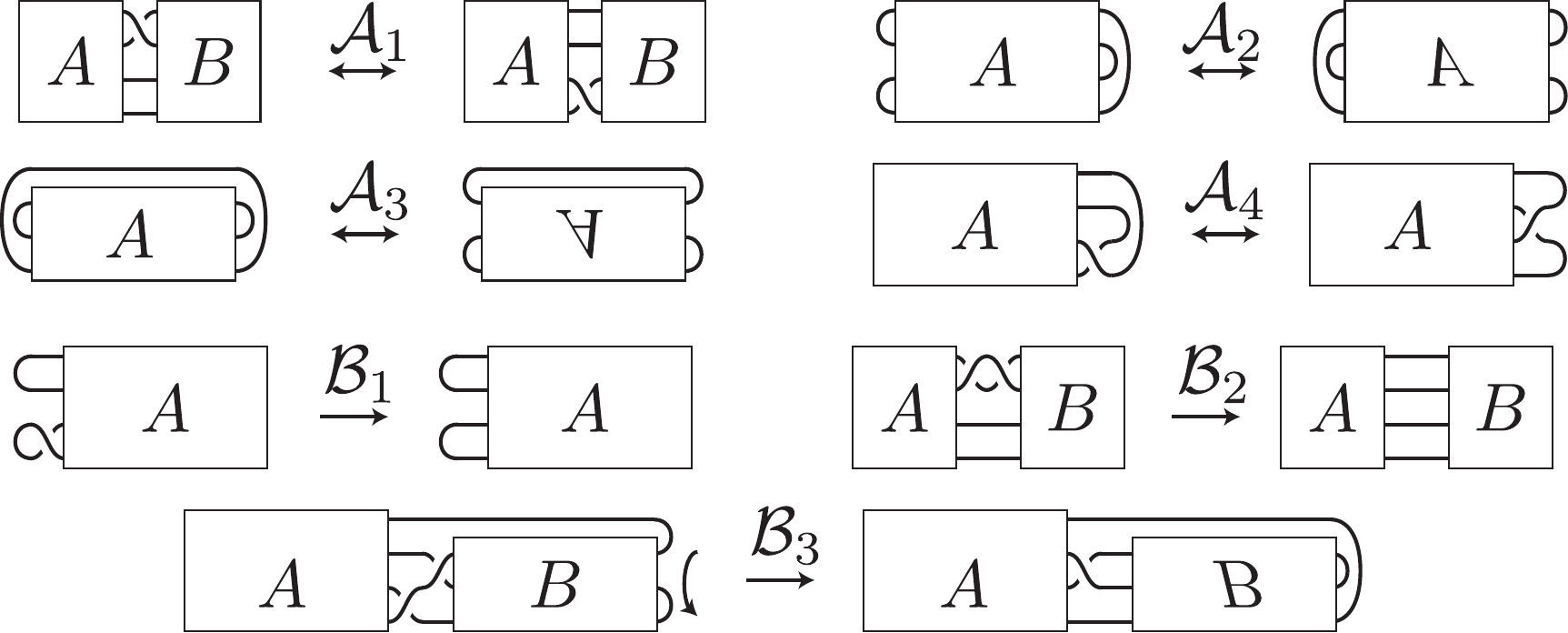}
 \caption{Illustrations of the $\mathcal{A}$ and $\mathcal{B}$ moves which take one 4-plat diagram to another.
 The $A$ in the $\mathcal{A}_3$ move and the $B$ in the $\mathcal{B}_3$ move are 3-braid diagrams. The $A$ in the $\mathcal{A}_2$ move is a 4-braid diagram.
 Every other rectangular block represents a 4-braid diagram along with an arbitrary one-sided closure. 
An $\mathcal A_1$-move relates two diagrams of the form $[_iw_1\sigma_{1}^{\eps}w_2]_j$ and $[_iw_1\sigma_{3}^{\eps}w_2]_j$ for $\eps=\pm1$. 
An $\mathcal A_2$-move relates two diagrams of the form
$[_iw]_j$ and $[_j\overline{w}]_i$, where $\overline{w}$ is the reverse word of $w$ as defined above. 
An $\mathcal A_3$-move relates two diagrams of the form $[_1w]_1$ and $[_2\widehat{w}]_2$, or
$[_1w]_2$ and $[_2\widehat{w}]_1$ for a $3$-braid word $w$, where 
$\widehat{w}$ is the $3$-braid word which is the flipped word defined above.
An $\mathcal A_4$-move relates two diagrams of the form $[_iw\sigma_1^{\eps}]_1$ and $[_iw\sigma_2^{-\eps}]_2$ for $\eps=\pm1$. 
A $\mathcal B_1$ move is a Reidemeister I move on one end of a $4$-plat diagram, that deforms $[_i\sigma_k^{\eps}w]_j$ into $[_iw]_j$ or deforms $[_jw\sigma_k^{\eps}]_i$ into $[_jw]_i$ 
for $\eps=\pm1$ and $(k,i)=(1,2),(2,1)$ or $(3,2)$. 
A $\mathcal B_2$ move is a Reidemeister II move on a reducible $4$-braid diagram, that deforms $[_iw_1\sigma_k^{\eps}\sigma_k^{-\eps}w_2]_j$ into $[_iw_1w_2]_j$ for $\eps=\pm1$ and $i,j,k\in\{1,2,3\}$.
A $\mathcal B_3$ move deforms $[_iw_1\sigma_1^{\eps}\sigma_2^{\eps} w_2]_j$ into $[_iw_1\sigma_2^{-\eps}\widehat{w_2}]_{j'}$, or deforms $[_iw_1\sigma_2^{\eps}\sigma_1^{\eps} w_2]_{j}$ into $[_iw_1\sigma_1^{-\eps}\widehat{w_2}]_{j'}$ for $\eps=\pm1$ and $\{j,j'\}=\{1,2\}$ when $w_2$ is a $3$-braid word,
where $\widehat{w_2}$ is the flipped word of $w_2$.
 }
\label{fig:ABmoves}
  \end{center}
 \end{figure}
 
First, we observe that the $\mathcal{A}$ and $\mathcal{B}$ moves of Figure~\ref{fig:ABmoves}  preserve the link type of any $4$-plat diagram.
\begin{lem}\label{lem:A1-4}
	Suppose that two $4$-plat diagrams $D_1$ and $D_2$ are related by one of the moves $\mathcal A_1$, $\mathcal A_2$, $\mathcal A_3$, $\mathcal A_4$, $\mathcal B_1$, $\mathcal B_2$ and $\mathcal B_3$.
	Then $D_1$ and $D_2$ represent the same link type.
\end{lem}

\begin{proof}
	First, we consider two $4$-plat diagrams $D_1=[_iw_1\sigma_{1}^{\eps}w_2]_j$ and $D_2=[_iw_1\sigma_{3}^{\eps}w_2]_j$ 
	related by 
	an $\mathcal A_1$ move, where $\eps=\pm1$.
	By closing a $4$-braid word $w$ at one end of the braid we obtain a rational tangle, say $w]_j$. 
	Since the rational tangle $w_2]_j$ in $D_1$ (and also in $D_2$) has symmetry, 
	$D_1$ and $D_2$ are transformed 
	into
	each other by turning over the rational tangle (``flype'') , 
	thus they represent the same link type as illustrated in Figure~\ref{fig:S1a}.
	Next, we consider $D_1=[_iw]_j$ and $D_2=[_j\overline{w}]_i$ or $[_i\widehat{w}]_j$ for an $\mathcal A_2$ or $\mathcal A_3$ move.
	We observe that $D_2$ is obtained by rotating all of $D_1$ around a vertical axis or by rotating a part of $D_1$ around a horizontal axis, 
	thus $D_1$ and $D_2$ represent the same link type. 
	Two $4$-plat diagrams related by an $\mathcal A_4$ move are the same link diagram, thus the $\mathcal A_4$ move does not change the link type. 
	$\mathcal B_1$ and $\mathcal B_2$ moves are the Reidemeister moves of types I and II, respectively, thus they do not change the link type. 
	Finally, we observe that a $\mathcal B_3$ move can be considered as a $\pi$-rotation of a $3$-string part of a $4$-plat diagram, and therefore it does not change the link type. 
\end{proof}

		\begin{figure}[htb]
			\centering
			\includegraphics[scale=.5]
			{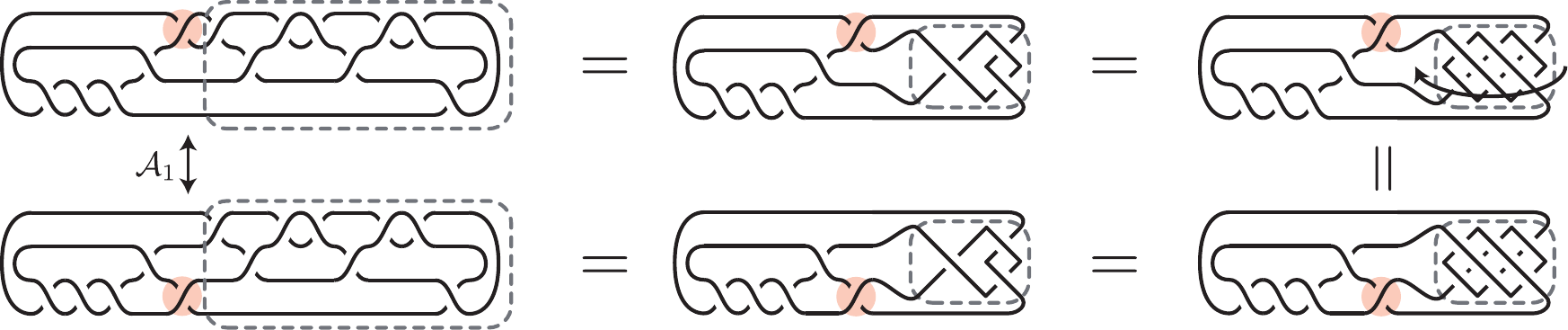}
			\caption{An $\mathcal A_1$ move preserves 
 crossing number and the link type. We illustrate this here with an example where $=$ represents topological equivalence. The portions of each link diagram surrounded by a dotted frame are 2-string rational tangles. Any 2-string rational tangle can be smoothly deformed to obtain one with a symmetric diagram as in the top right image. The top right link diagram can thus be transformed to the diagram on the bottom right by a 180-degree rotation around the vertical axis, going through the middle of the tangle (not shown). This move is indicated by an arrow. }\label{fig:S1a}
		\end{figure}

For a given $4$-braid word $w_0$, we now consider links obtained from a $4$-plat diagram by inserting $w_0$.
Recall that
we say that a $4$-plat diagram {\em $D'$ is obtained from $D$ by inserting $w_0$},  denoted by 
$D\stackrel{w_0}{\longrightarrow}D'$, provided that 
the two $4$-plat diagrams $D$ and $D'$ are represented by non-empty $4$-braid words $w_1,w_2$ as follows:
\begin{align}
	D &= [_iw_1 w_2]_j,\\
	D' &= [_iw_1w_0w_2]_j.
\end{align}
Note that in the insertion above we always assume that $w_1$ and $w_2$ are non-empty words, i.e.\ we do not consider insertions at ends of $4$-plat diagrams.
 We can obtain  a 3-braid word $w_0'$ from $w_0$ by replacing any $\sigma_3^{\pm1}$ by $\sigma_1^{\pm1}$. An insertion of $w_0'$ into a given 4-plat diagram will yield the same link type as the insertion of $w_0$ into the diagram; this is because the two link diagrams are related by $\mathcal A_1$ moves; see Figure~\ref{fig:S1b}.  
Thus we may assume that $w_0$ is a $3$-braid word.

		\begin{figure}[htb]
			\centering
			\includegraphics[scale=.5]{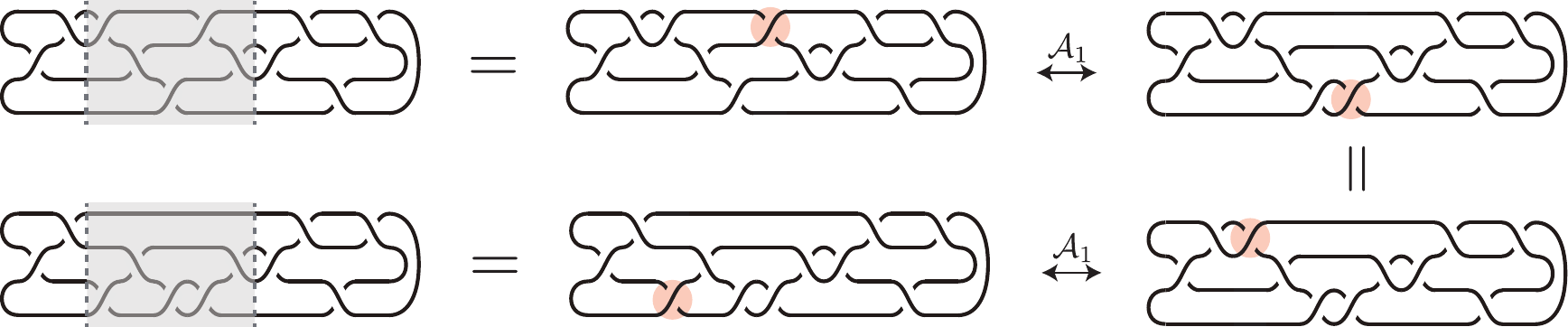}
			\caption{The two $4$-plat diagrams on the left are obtained from the same $4$-plat diagram by insertions of  
  (top left) a 
  $4$-braid word $\sigma_3^{-1}\sigma_2\sigma_1^{-1}\sigma_3^{-1}\sigma_2$ and
  (bottom left) the corresponding $3$-braid word $\sigma_1^{-1}\sigma_2\sigma_1^{-2}\sigma_2$. The 4-plat diagrams on the left are shown to be related to the ones on the right by two $\mathcal A_1$ moves. Thus they represent the same link type (the unknot in this case). }\label{fig:S1b}
		\end{figure}
Suppose that some link $K$ is obtained from a $4$-plat diagram $D_1$ by inserting $w_0$, and that $D_1,D_2$ are related by an $\mathcal A_2$ or $\mathcal A_3$ move. 
Then $K$ is also obtained from $D_2$ by inserting $\overline{w_0}$ or $\widehat{w_0}$. 
Therefore, we consider the insertions of $w_0$, $\overline{w_0}$, $\widehat{w_0}$ and $\widehat{\overline{w_0}}$  all together. 
Let $S(w_0)$ be the set $\{w_0,\overline{w_0},\widehat{w_0},\widehat{\overline{w_0}}\}$ for a $3$-braid word $w_0$. 
Note that $S(w_0)=S(\overline{w_0})=S(\widehat{w_0})=S(\widehat{\overline{w_0}})$ 
since $\overline{\overline{w_0}}=\widehat{\widehat{w_0}}=w_0$ and $\overline{\widehat{w_0}}=\widehat{\overline{w_0}}$. 
Given a 4-plat diagram $D$ and a $3$-braid word $w_0$, 
let $\mathcal{K}_{w_0}(D)$ be the set of all link types of $4$-plat diagrams that are obtained from $D$ by inserting $w_0$, $\overline{w_0}$, $\widehat{w_0}$, or $\widehat{\overline{w_0}}$, that is: 
\begin{equation}
	\mathcal{K}_{w_0}(D):=\{\mbox{the link type of }D'\mid w_0'\in S(w_0),\ D\stackrel{w_0'}{\longrightarrow}D'\}.
\end{equation}

The links in $\mathcal{K}_{w_0}(D)$ are clearly $4$-plats 
since the resulting diagrams $D'$ are $4$-plat diagrams.
\begin{lem}\label{lem:minimalunknotting}
Let $D_0$ be a minimal-crossing $4$-plat diagram of a non-trivial $4$-plat $L$. 
There exists a $3$-braid word $w_0$ such that $D_0$ can be converted into a diagram of the unknot by inserting $w_0$. In other words, $\mathcal{K}_{w_0}(D)$ contains the unknot for some $3$-braid word $w_0$. Moreover, $w_0$ can be taken so that the number of crossings of $w_0$ is at most the crossing number of $L$. 
\end{lem}
\begin{proof}
Since $\mathcal{A}_1$ moves preserve 
crossing number and
link type, we may assume $D_0$ to be a closure of a $3$-braid word $w$; see also Proposition~\ref{prop:conway}. %
Since $L$ is a non-trivial $4$-plat, the $3$-braid word $w$ can be divided into two non-empty $3$-braid words $w_1$ and $w_2$ as $w=w_1w_2$. Using the property of the inverses of braids, all crossings are cancelled after inserting $w_1^{-1}w_2^{-1}$ into $D_0$, and therefore the resulting $4$-plat diagram represents an unlink.
The $4$-plat may be the unknot or the 2-component unlink, depending on how $w$ is closed for $D_0$.
In the former case, 
let $w_0=w_1^{-1}w_2^{-1}$.
In the latter case, 
we can select a $3$-braid $w_0$ that leaves one crossing at an end of $D_0$ to obtain the unknot. 
Thus, there exists a $4$-braid $w_0$ such that the crossing number of $w_0$ is less than or equal to the crossing number of $L$, and
$D_0$ can be converted into a diagram of the unknot by inserting $w_0$. 
\end{proof}

From Lemma \ref{lem:minimalunknotting}, the remaining proof of Theorem~\ref{thm:unknottinginsertion} is obtained by taking $K$ to be the unknot 
in the following theorem.

\begin{thm}%
\label{thm:twist2}
	Suppose that $D$ and $D_0$ are $4$-plat diagrams that represent the same link type, and that $D_0$ is a minimal-crossing diagram. If a link $K$ is obtained by inserting a $3$-braid word $w_0$ into $D_0$, then $K$ can be obtained from $D$ by inserting one of $w_0$, $\overline{w_0}$, $\widehat{w_0}$ and $ \widehat{\overline{w_0}}$. %
	Namely, the following inclusion relation holds for any $3$-braid word $w_0$.
	\begin{equation}
		\mathcal{K}_{w_0}(D)\supseteq \mathcal{K}_{w_0}(D_0).
	\end{equation}
\end{thm}

To prove Theorem \ref{thm:twist2} we deform $D$ to $D_0$ step by step without changing the link type and we show that an inclusion relation such as $\mathcal{K}_{w_0}(D)\supseteq \mathcal{K}_{w_0}(D_0)$ holds for each step. 
First, we observe that 
$\mathcal A$ moves do not change the set $\mathcal{K}_{w_0}(D)$. 

\begin{lem}\label{lem:replace1} 
	Suppose that two $4$-plat diagrams $D_1$ and $D_2$ are related by one of the moves
	$\mathcal A_1$, $\mathcal A_2$, $\mathcal A_3$ or $\mathcal A_4$. 
	Then, for any $3$-braid word $w_0$ we have
	\begin{equation}
		\mathcal{K}_{w_0}(D_1)=\mathcal{K}_{w_0}(D_2).
	\end{equation}
\end{lem}

\begin{proof}
	Except for an $\mathcal A_4$ move, we already observed the equation $\mathcal{K}_{w_0}(D_1)=\mathcal{K}_{w_0}(D_2)$ above.
	Suppose $D_1$ and $D_2$ are related by an $\mathcal A_4$ move so that $D_1=[_iw\sigma_1^{\eps}]_1$ and $D_2=[_iw\sigma_2^{-\eps}]_2$, 
	and $D_1'$ is obtained from $D_1$ by inserting $w_0$.  
	Since we do not consider the insertion at the ends of $D_1$, 
	$D_1$ and $D_1'$ are seen as $D_1=[_i w_1w_2\sigma_1^{\eps}]_1$ and  $D_1'=[_i w_1w_0w_2\sigma_1^{\eps}]_1$ for some $4$-braid words $w_1$ and $w_2$ ($w_1$ is a non-empty word but $w_2$ can be an empty word). 
	Then $D_2'=[_iw_1w_0w_2\sigma_2^{-\eps}]_2$ (obtained from $D_2$) represents the same link type as $D_1'$ since they are related by an $\mathcal A_4$ move. 
	This implies the inclusion relation $\mathcal{K}_{w_{0}}(D_1)\subseteq\mathcal{K}_{w_{0}}(D_2)$. 
	The reverse inclusion relation is shown similarly, so the equation $\mathcal{K}_{w_0}(D_1)= \mathcal{K}_{w_0}(D_2)$ holds.
\end{proof}

By replacing all $\sigma_3^{\pm1}$'s with $\sigma_1^{\pm1}$'s on a $4$-plat diagram by $\mathcal A_1$ moves, 
we obtain a $4$-plat diagram which is a closure of a $3$-braid word. 
In particular, $4$-plat diagrams that are closures of reduced $3$-braid words, 
\begin{equation}
	[_1\sigma_1^{a_1}\sigma_2^{a_2}\sigma_1^{a_3}\cdots\sigma_j^{a_{n}}]_j \qquad\text{and}\qquad 
	[_2\sigma_2^{a_1}\sigma_1^{a_2}\sigma_2^{a_3}\cdots\sigma_{j'}^{a_n}]_{j'},
\end{equation}
are called {\em Conway's normal forms} 
if $a_i\neq0$ for all $i$ and $j=1\ (j'=2)$ or $j=2\ (j'=1)$ according to whether $n$ is odd or even. 
In fact, the above two represent the same link type since they are related by an $\mathcal{A}_3$ move, and are denoted by $C(a_1,-a_2,a_3,\ldots,(-1)^na_n)$.
The following is a special case of Theorem \ref{thm:twist2} where $D$ is also a minimal-crossing $4$-plat diagram.  
\begin{prop}\label{prop:conway}
	Suppose $D$ and $D_0$ are minimal-crossing 
	$4$-plat diagrams of the same link type.
	Then  for any $3$-braid word $w_0$ we have
	\begin{equation}
		\mathcal{K}_{w_0}(D)=\mathcal{K}_{w_0}(D_0).
	\end{equation}
\end{prop}

\begin{proof}
	Since $D$ and $D_0$ are minimal-crossing diagrams, they can be transformed into minimal-crossing Conway's normal forms $D'$ and $D'_0$, respectively, by replacing all $\sigma_3^{\pm1}$'s with $\sigma_1^{\pm1}$'s by $\mathcal A_1$ moves. 
	It is known that two minimal-crossing Conway's normal forms represent the same link if and only if they are related by the combination of $\mathcal A_2,\mathcal A_3$ and $\mathcal A_4$ moves, see \cite{Murasugi}.  
	Then $D$ can be transformed to $D_0$ by a finite sequence of $\mathcal A_1,\mathcal A_2,\mathcal A_3$ and $\mathcal A_4$ moves, via $D'$ and $D'_0$ on the way. 
	Moreover, the equation $\mathcal{K}_{w_0}(D)=\mathcal{K}_{w_0}(D_0)$ holds by Lemma \ref{lem:replace1}.
\end{proof}

Next, we observe an inclusion relation $\mathcal{K}_{w_0}(D_1)\supseteq \mathcal{K}_{w_0}(D_2)$ on $\mathcal B$ moves from $D_1$ to $D_2$.
\begin{lem}\label{lem:reducing}%
	Suppose a $4$-plat diagram $D_1$ is deformed into $D_2$ by one of $\mathcal B_1$, $\mathcal B_2$ and $\mathcal B_3$. %
	Then %
	the following inclusion relation holds for any $3$-braid word $w_0$.
	\begin{equation}
		\mathcal{K}_{w_0}(D_1)\supseteq\mathcal{K}_{w_0}(D_2).
	\end{equation}
\end{lem}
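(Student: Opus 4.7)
The plan is to argue case by case on which of $\mathcal B_1$, $\mathcal B_2$, $\mathcal B_3$ relates $D_1$ to $D_2$. In each case I would take an arbitrary $K\in\mathcal{K}_{w_0}(D_2)$, realized by an insertion $D_2\stackrel{w}{\longrightarrow}D_2'$ for some $w\in S(w_0)$, and exhibit an insertion $D_1\stackrel{w'}{\longrightarrow}D_1'$ with $w'\in S(w_0)$ such that the same (or essentially the same) $\mathcal B$-move carries $D_1'$ to $D_2'$. Since $\mathcal B$-moves preserve link type by Lemma \ref{lem:A1-4}, this shows $K\in\mathcal{K}_{w_0}(D_1)$, giving the desired inclusion.

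For $\mathcal B_1$, the cancellable letter $\sigma_k^{\eps}$ lies at an end of $D_1$, while an insertion in $D_2$ is, by definition, strictly internal. Writing $D_2'=[_iw_3 w\, w_4]_j$ with $w_3,w_4$ non-empty, I would simply insert $w$ at the same location in $D_1$, obtaining $D_1'=[_i\sigma_k^{\eps}w_3 w\, w_4]_j$; the $\mathcal B_1$ move still cancels the end $\sigma_k^{\eps}$ and produces $D_2'$. The argument for $\mathcal B_2$ is similar: if the insertion in $D_2$ lies wholly within one of $w_1,w_2$, copy it to the same place in $D_1$; if the insertion sits exactly at the seam between $w_1$ and $w_2$ (where the pair $\sigma_k^{\eps}\sigma_k^{-\eps}$ of $D_1$ has been cancelled), place the inserted $w$ in $D_1$ just before (or just after) that pair, so that $\mathcal B_2$ still cancels it and produces $D_2'$.

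The main obstacle is $\mathcal B_3$, since it is not a local simplification but also flips a $3$-braid sub-word $w_2$ to $\widehat{w_2}$. Writing $D_1=[_iw_1\sigma_1^{\eps}\sigma_2^{\eps}w_2]_j$ and $D_2=[_iw_1\sigma_2^{-\eps}\widehat{w_2}]_{j'}$, there are essentially four positions where the insertion of $w$ into $D_2$ can occur: (A) strictly inside $w_1$; (B) at the seam between $w_1$ and $\sigma_2^{-\eps}$; (C) at the seam between $\sigma_2^{-\eps}$ and $\widehat{w_2}$; or (D) strictly inside $\widehat{w_2}$. In cases (A) and (B) the insertion lies in the part of $D_2$ unaffected by the flip, so the same word $w$ can be placed at the corresponding location in $D_1$, and the same $\mathcal B_3$ move (with the same $w_2$) yields $D_2'$. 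In cases (C) and (D) the insertion lies in the flipped part, so I would instead insert $\widehat{w}\in S(w_0)$ (note $\widehat{w_0}\in S(w_0)$ and $\widehat{}$ preserves $3$-braid words, so the move's hypothesis is preserved); writing $\widehat{w_2}=u_1u_2$ and using $w_2=\widehat{u_1}\widehat{u_2}$ together with $\widehat{\widehat{u_1}\widehat{w}\widehat{u_2}}=u_1 w u_2$, the $\mathcal B_3$ move applied to the new $D_1'$ reproduces $D_2'$ exactly. The mirror variant $[_iw_1\sigma_2^{\eps}\sigma_1^{\eps}w_2]_j \to [_iw_1\sigma_1^{-\eps}\widehat{w_2}]_{j'}$ is handled identically.

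The delicate points are thus (i) keeping track of the flip $\widehat{}$ to decide whether $w_0$ or $\widehat{w_0}$ must be inserted in $D_1$, and (ii) verifying that after the insertion the hypothesis ``$w_2$ is a $3$-braid word'' of $\mathcal B_3$ is preserved, which follows because $\widehat{w_0}$ is automatically a $3$-braid word when $w_0$ is. Once those bookkeeping checks are in place, the routine verification that insertion positions remain non-empty in all sub-cases completes the proof.
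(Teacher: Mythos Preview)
Your argument is correct and follows essentially the same strategy as the paper's proof: in each case you take $K\in\mathcal{K}_{w_0}(D_2)$, realise it by an insertion of some $w\in S(w_0)$, lift that insertion to $D_1$, and use that the corresponding $\mathcal B$-move from $D_1'$ to $D_2'$ preserves link type. Your treatment of $\mathcal B_3$ is in fact slightly more explicit than the paper's: where the paper records the result of the insertion as $[_iw'_1\sigma_2^{-\eps}w'_2]_{j'}$ with ``either $w'_1=w_1$ or $w'_2=\widehat{w_2}$'' and then passes to $[_iw'_1\sigma_1^{\eps}\sigma_2^{\eps}\widehat{w'_2}]_{j}$, you unpack the four insertion positions (A)--(D) and make explicit that in (C) and (D) one must insert $\widehat{w}$ rather than $w$ into $D_1$; this is exactly the content of the paper's implicit use of the closure of $S(w_0)$ under $\widehat{\ }$, and your checks that the tail remains a $3$-braid word and that the flanking sub-words stay non-empty are the right bookkeeping points to verify.
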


\begin{proof}
	First, we consider a case where $D_1=[_i\sigma_k^{\eps}w]_j$ and $D_2=[_iw]_j$ for a $\mathcal B_1$ move, where $\eps=\pm1$ and $(k,i)=(1,2),(2,1)$ or $(3,2)$.
	For any link type  $K\in \mathcal{K}_{w_0}(D_2)$, there exists a $4$-plat diagram $[_iw']_j$ representing $K$ such that $[_iw]_j\stackrel{w_0'}{\rightarrow}[_iw']_j$, for some $w_0'\in S(w_0)$.
	Then a $4$-plat diagram $[_i\sigma_k^{\eps}w']_j$ that is deformed into $[_iw']_j$ by a $\mathcal B_1$ move, also represents $K$ and  $[_i\sigma_k^{\eps}w]_j\stackrel{w_0'}{\rightarrow}[_i\sigma_k^{\eps}w']_j$. 
	This implies $\mathcal{K}_{w_0}(D_1)\supseteq \mathcal{K}_{w_0}(D_2)$ for the $\mathcal B_1$ move from $D_1$ to $D_2$.
	
	Next, we consider the case where $D_1=[_iw_1\sigma_k^{\eps}\sigma_k^{-\eps}w_2]_j$ and $D_2=[_iw_1w_2]_j$ for a $\mathcal B_2$ move, where $\eps=\pm1$, $i,j\in\{1,2\}$ and $k\in\{1,2,3\}$.
	For any link type $K\in \mathcal{K}_{w_0}(D_2)$, there exists a 4-plat diagram $[_iw'_1w'_2]_j$ (either $w'_1=w_1$ or $w'_2=w_2$)
	representing $K$ such that $[_iw_1w_2]_j\stackrel{w_0'}{\longrightarrow} [_iw'_1w'_2]_j$ for some $w_0'\in S(w_0)$. 
	Then a $4$-plat diagram $[_iw'_1\sigma_k^{\eps}\sigma_k^{-\eps}w'_2]_j$ that is deformed into $[_iw'_1w'_2]_j$ by a $\mathcal B_2$ move,
	also represents $K$ and $[_iw_1\sigma_k^{\eps}\sigma_k^{-\eps}w_2]_j\stackrel{w_0'}{\rightarrow}[_iw'_1\sigma_k^{\eps}\sigma_k^{-\eps}w'_2]_j$. 
	This implies that $\mathcal{K}_{w_0}(D_1)\supseteq \mathcal{K}_{w_0}(D_2)$ for the $\mathcal B_2$ move from $D_1$ to $D_2$.
	
	Finally, we consider only the case where $D_1=[_iw_1\sigma_1^{\eps}\sigma_2^{\eps} w_2]_j$ and $D_2=[_iw_1\sigma_2^{-\eps}\widehat{w_2}]_{j'}$ for a $\mathcal B_3$ move, where $w_2$ is a $3$-braid word, $\eps=\pm1$ and $\{j,j'\}=\{1,2\}$.
	For any link type $K\in \mathcal{K}_{w_0}(D_2)$, there exists a $4$-plat diagram $[_iw'_1\sigma_2^{-\eps} w'_2]_{j'}$  (either $w'_1=w_1$ or $w'_2=\widehat{w_2}$), 
	representing $K$ and $[_iw_1\sigma_2^{-\eps}\widehat{w_2}]_{j'}\stackrel{w_0'}{\longrightarrow}[_iw'_1\sigma_2^{-\eps} w'_2]_{j'}$ for some $w_0'\in S(w_0)$. 
	Then a $4$-plat diagram  $[_iw'_1\sigma_1^{\eps}\sigma_2^{\eps} \widehat{w'_2}]_{j}$, that is deformed into $[_iw'_1\sigma_2^{-\eps} w'_2]_{j'}$ by a $\mathcal B_3$ move, 
	also represents $K$ and $[_iw_1\sigma_1^{\eps}\sigma_2^{\eps}\widehat{w_2}]_{j}\stackrel{w_0'}{\longrightarrow}[_iw'_1\sigma_1^{\eps}\sigma_2^{\eps} \widehat{w'_2}]_{j}$. This implies $\mathcal{K}_{w_0}(D_1)\supseteq\mathcal{K}_{w_0}(D_2)$ for the $\mathcal B_3$ move from $D_1$ to $D_2$. 
\end{proof}

Finally, we prove Theorem \ref{thm:twist2}.  %
It is known that any $4$-plat has an alternating ($4$-plat) diagram. It is also known that a reduced alternating diagram of a $4$-plat (prime alternating link) is a minimal-crossing diagram, while a non-alternating diagram cannot be minimal-crossing~\cite{Kauffman, Murasugi2, Thistlethwaite}. 

\begin{proof}[Proof of Theorem \ref{thm:twist2}] 
	First, we change the $4$-plat diagram $D$ to $D_1$ so that $D_1$ is a closure of a $3$-braid word by replacing all $\sigma_3^{\pm1}$'s with $\sigma_1^{\pm1}$'s using $\mathcal A_1$ moves. 
	By Lemma \ref{lem:replace1}, we have $\mathcal{K}_{w_0}(D)=
	\mathcal{K}_{w_0}(D_1)$. 
	Next, we obtain a $4$-plat diagram
	$D_2$ by applying $\mathcal B_1,\mathcal B_2$ and $\mathcal B_3$ moves to reduce the crossing number of $D_1$ as much as possible. By Lemma 
	\ref{lem:reducing},  we have $\mathcal{K}_{w_0}(D_1)\supseteq\mathcal{K}_{w_0}(D_2)$. 
	$D_2$ is also a closure of a $3$-braid word 
	and we cannot apply $\mathcal B_1$, $\mathcal B_2$ and $\mathcal B_3$ moves. 
	This implies that $D_2$ is a reduced alternating Conway's normal form that is a minimal-crossing $4$-plat diagram.
	Then, by Proposition  \ref{prop:conway}, we have $\mathcal{K}_{w_0}(D_2)=\mathcal{K}_{w_0}(D_0)$. 
	See Figure~\ref{fig:S1c}
	for an example.
	Therefore, 
	\begin{equation}
		\mathcal{K}_{w_0}(D)=%
		\mathcal{K}_{w_0}(D_1)\supseteq\mathcal{K}_{w_0}(D_2)=\mathcal{K}_{w_0}(D_0).
	\end{equation}
\end{proof}

		\begin{figure}[htb]
        \centering
			\includegraphics[scale=.5]
			{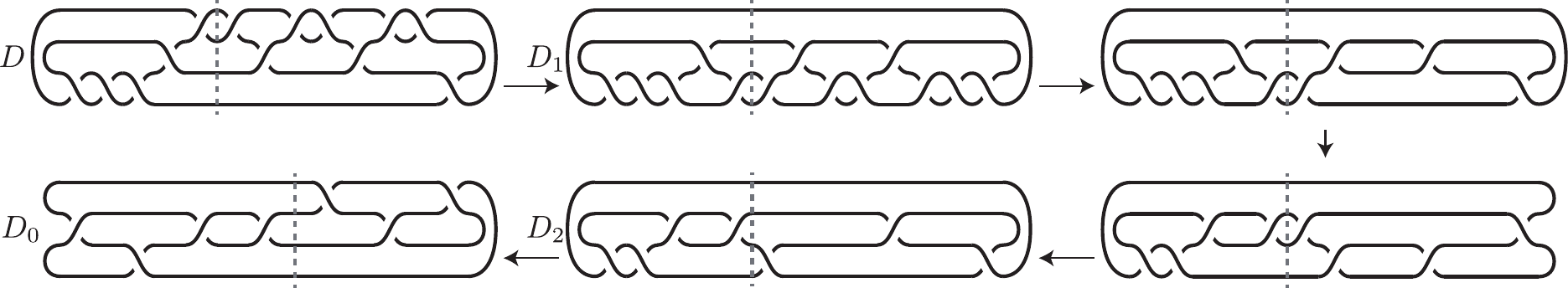}
			\caption{Theorem~\ref{thm:twist2} example: A diagram (3-braid word closure) $D_1$ is obtained from $D$ by $\mathcal A_1$ moves. $D_2$ is obtained from $D_1$ by applying $\mathcal B_1$, $\mathcal B_2$ and $\mathcal B_3$ moves as much as possible, to obtain a minimal-crossing Conway's form. Then $D_2$ and minimal crossing $D_0$ are related by $\mathcal A_1$, $\mathcal A_2$, $\mathcal A_3$ and $\mathcal A_4$ moves.}\label{fig:S1c}
		\end{figure}
\subsection{Unknotting lattice links}
\label{upperbound}

In this section we will prove 
that any embedding in $\tube^*$ of a knot or a non-split link $L$ can be converted to a polygon of the unknot by the insertion of at most $f_L$ blocks (one for each prime factor of $L$), which correspond to $4$-braid insertions.
For the proof, we will establish the following:
(i) $4$-braid insertions as defined in the previous section can be realized by the insertion of blocks at midplanes of sections of tube embeddings of $L$;
(ii) the theory from the previous section can be used to identify the required block insertion locations (midplanes of sections). 
The result is Theorem~\ref{thm-insertion}.

First, we focus on (ii). Given  an embedding $P$ of link-type $L=L_1\#L_2\#\cdots\#L_{f_L}$, first we find a set of associated embeddings $P_{i}$ for each factor $L_i$.  The $P_i$'s will have the properties that: a) their shifted diagrams (as described in \cite[Definition~3]{minsteptube} and illustrated in Figure~\ref{fig2}{}{B}
) are 4-plat diagrams; and b) any block insertion that unknots $P_i$ can be realized by a block insertion at a corresponding section of $P$, so that $L_i$ is removed from the factor decomposition of $P$.  Thus the resulting embedding $P'$ has link-type $L=L_1\#\cdots\#L_{i-1}\#L_{i+1}\#\cdots\#L_{f_L}$.

	\begin{figure}[htb]
		\centering
        \includegraphics[width=.7\textwidth]{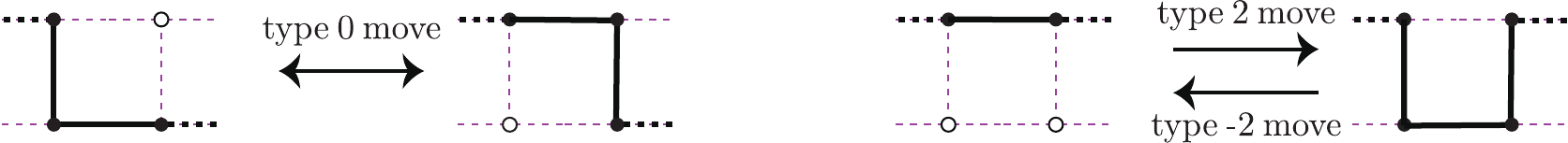}
			
			
			
			
				
			
		\caption{Illustrations of Type $0$, $2$ and $-2$ moves from the BFACF algorithm. A vertex in the figure, marked by an open circle, indicates a lattice point not occupied by the lattice polygon.}
		\label{BFACF}
	\end{figure}
For this we need the following definitions.  We use the standard definition of Type $0$, $2$ and $-2$ moves from the BFACF algorithm as illustrated in Figure~\ref{BFACF}.
It has been established that these moves preserve knot and link-type \cite{vanrensburg_whittington_1990}.
Then, a 4-section is called a {\em  hidden 2-section}  if it 
can be turned into a 2-section after applying 
one type $-2$-move.
See Figure~\ref{Fig:hidden2section}.
Note that if an embedding $P$ has a hidden 2-section, we can divide $P$ into two smaller embeddings in a similar way as in the case of a 2-section.
See Figure~\ref{Fig:hidden2sectiondecomp}.
Note further that a $-2$-move involves three edges  which form a U-shape; we say that the {\em direction} of the $-2$-move is the lattice direction in which the edge at the bottom of the U moves when a $-2$-move is performed.   Here the {\em bottom} of the U is the edge which is perpendicular to the other two edges forming the U.
	\begin{figure}[htb]
		\centering
		\begin{tikzpicture}[scale=0.7]
			\begin{knot}[consider self intersections=true, clip width=3, clip radius=1.5pt, end tolerance=1pt]
				\strand[thick] (0,0,0) node [vblack] {} -- (1,0,0) node [vblack] {} -- (1,0,1) node [vblack] {} -- (2,0,1) node [vblack] {} -- (3,0,1) node [vblack] {} -- (4,0,1) node [vblack] {} -- (5,0,1) node [vblack] {} -- (6,0,1) node [vblack] {} -- (6,0,0) node [vblack] {} -- (7,0,0) node [vblack] {};
				\strand[thick] (0,1,0) node [vblack] {} -- (1,1,0) node [vblack] {} -- (2,1,0) node [vblack] {} -- (2,0,0) node [vblack] {} -- (3,0,0) node [vblack] {} -- (4,0,0) node [vblack] {} -- (5,0,0) node [vblack] {} -- (5,1,0) node [vblack] {} -- (6,1,0) node [vblack] {} -- (7,1,0) node [vblack] {};
				\strand[thick] (7,2,1) node [vblack] {} -- (6,2,1) node [vblack] {} -- (6,1,1) node [vblack] {} -- (5,1,1) node [vblack] {} -- (5,2,1) node [vblack] {} -- (4,2,1) node [vblack] {} --  (4,1,1) node [vblack] {} -- (3,1,1) node [vblack] {} -- (3,1,0) node [vblack] {} -- (4,1,0) node [vblack] {} -- (4,2,0) node [vblack] {} -- (5,2,0) node [vblack] {} -- (6,2,0) node [vblack] {} -- (7,2,0) node [vblack] {};
				\strand[thick] (0,1,1) node [vblack] {} -- (1,1,1) node [vblack] {} -- (2,1,1) node [vblack] {} -- (2,2,1) node [vblack] {} -- (3,2,1) node [vblack] {} -- (3,2,0) node [vblack] {} -- (2,2,0) node [vblack] {} -- (1,2,0) node [vblack] {} -- (1,2,1) node [vblack] {} -- (0,2,1) node [vblack] {};
				\draw (0,0,1) node [vgrey] {};
				\draw (0,2,0) node [vgrey] {};
				\draw (7,0,1) node [vgrey] {};
				\draw (7,1,1) node [vgrey] {};
			\end{knot}
			
			\tikzset{xshift=5.8cm}
			\draw[line width=1.5pt, ->] (2.5,0,0.75) -- (3.5,0,0.75);
			
			\tikzset{xshift=4cm}
			\begin{knot}[consider self intersections=true, clip width=3, clip radius=3pt, end tolerance=1pt]
				\strand[thick] (0,0,0) node [vblack] {} -- (1,0,0) node [vblack] {} -- (1,0,1) node [vblack] {} -- (2,0,1) node [vblack] {} -- (3,0,1) node [vblack] {} -- (4,0,1) node [vblack] {} -- (5,0,1) node [vblack] {} -- (6,0,1) node [vblack] {} -- (6,0,0) node [vblack] {} -- (7,0,0) node [vblack] {};
				\strand[thick] (0,1,0) node [vblack] {} -- (1,1,0) node [vblack] {} -- (2,1,0) node [vblack] {} -- (2,0,0) node [vblack] {} -- (3,0,0) node [vblack] {} -- (4,0,0) node [vblack] {} -- (5,0,0) node [vblack] {} -- (5,1,0) node [vblack] {} -- (6,1,0) node [vblack] {} -- (7,1,0) node [vblack] {};
				\strand[thick, black] (7,2,1) node [vblack] {} -- (6,2,1) node [vblack] {} -- (6,1,1) node [vblack] {} -- (5,1,1) node [vblack] {} -- (5,2,1) node [vblack] {} -- (4,2,1) node [vblack] {} --  (4,1,1) node [vblack] {} -- (4,1,0) node [vblack] {} -- (4,2,0) node [vblack] {} -- (5,2,0) node [vblack] {} -- (6,2,0) node [vblack] {} -- (7,2,0) node [vblack] {};
				\strand[thick] (0,1,1) node [vblack] {} -- (1,1,1) node [vblack] {} -- (2,1,1) node [vblack] {} -- (2,2,1) node [vblack] {} -- (3,2,1) node [vblack] {} -- (3,2,0) node [vblack] {} -- (2,2,0) node [vblack] {} -- (1,2,0) node [vblack] {} -- (1,2,1) node [vblack] {} -- (0,2,1) node [vblack] {};
				\draw (0,0,1) node [vgrey] {};
				\draw (0,2,0) node [vgrey] {};
				\draw (7,0,1) node [vgrey] {};
				\draw (7,1,1) node [vgrey] {};
			\end{knot}
		\end{tikzpicture}
		\caption{An example of a hidden 2-section. A 2-section appears after applying one $-2$-move.}
		\label{Fig:hidden2section}
	\end{figure}
    \begin{figure}[htb]
\centering
		\begin{tikzpicture}[scale=0.6]
			
			\begin{knot}[consider self intersections=true, clip width=3, clip radius=1.5pt, end tolerance=1pt]
				\strand[thick] (0,0,0) node [vblack] {} -- (1,0,0) node [vblack] {} -- (1,0,1) node [vblack] {} -- (2,0,1) node [vblack] {} -- (3,0,1) node [vblack] {} -- (4,0,1) node [vblack] {} -- (5,0,1) node [vblack] {} -- (6,0,1) node [vblack] {} -- (6,0,0) node [vblack] {} -- (7,0,0) node [vblack] {};
				\strand[thick] (0,1,0) node [vblack] {} -- (1,1,0) node [vblack] {} -- (2,1,0) node [vblack] {} -- (2,0,0) node [vblack] {} -- (3,0,0) node [vblack] {} -- (4,0,0) node [vblack] {} -- (5,0,0) node [vblack] {} -- (5,1,0) node [vblack] {} -- (6,1,0) node [vblack] {} -- (7,1,0) node [vblack] {};
				\strand[thick] (7,2,1) node [vblack] {} -- (6,2,1) node [vblack] {} -- (6,1,1) node [vblack] {} -- (5,1,1) node [vblack] {} -- (5,2,1) node [vblack] {} -- (4,2,1) node [vblack] {} --  (4,1,1) node [vblack] {} -- (3,1,1) node [vblack] {} -- (3,1,0) node [vblack] {} -- (4,1,0) node [vblack] {} -- (4,2,0) node [vblack] {} -- (5,2,0) node [vblack] {} -- (6,2,0) node [vblack] {} -- (7,2,0) node [vblack] {};
				\strand[thick] (0,1,1) node [vblack] {} -- (1,1,1) node [vblack] {} -- (2,1,1) node [vblack] {} -- (2,2,1) node [vblack] {} -- (3,2,1) node [vblack] {} -- (3,2,0) node [vblack] {} -- (2,2,0) node [vblack] {} -- (1,2,0) node [vblack] {} -- (1,2,1) node [vblack] {} -- (0,2,1) node [vblack] {};
				\draw (0,0,1) node [vgrey] {};
				\draw (0,2,0) node [vgrey] {};
				\draw (7,0,1) node [vgrey] {};
				\draw (7,1,1) node [vgrey] {};
			\end{knot}
			
			\tikzset{xshift=5.8cm}
			\draw[line width=1.5pt, ->] (2.5,0,0.75) -- (3.5,0,0.75);
			
			\tikzset{xshift=4cm}
			\begin{knot}[consider self intersections=true, clip width=3, clip radius=3pt, end tolerance=1pt]
				\strand[thick] (0,0,0) node [vblack] {} -- (1,0,0) node [vblack] {} -- (1,0,1) node [vblack] {} -- (2,0,1) node [vblack] {} -- (3,0,1) node [vblack] {} -- (3,0,0) node [vblack] {} -- (2,0,0) node [vblack] {} -- (2,1,0) node [vblack] {} -- (1,1,0) node [vblack] {} -- (0,1,0) node [vblack] {};
				\strand[thick] (7,0,0) node [vblack] {} -- (6,0,0) node [vblack] {} -- (6,0,1) node [vblack] {} -- (5,0,1) node [vblack] {}  -- (4,0,1) node [vblack] {} -- (4,0,0) node [vblack] {} -- (5,0,0) node [vblack] {} -- (5,1,0) node [vblack] {} -- (6,1,0) node [vblack] {} -- (7,1,0) node [vblack] {};
				\strand[thick] (7,2,1) node [vblack] {} -- (6,2,1) node [vblack] {} -- (6,1,1) node [vblack] {} -- (5,1,1) node [vblack] {} -- (5,2,1) node [vblack] {} -- (4,2,1) node [vblack] {} --  (4,1,1) node [vblack] {} -- (3,1,1) node [vblack] {} -- (3,1,0) node [vblack] {} -- (4,1,0) node [vblack] {} -- (4,2,0) node [vblack] {} -- (5,2,0) node [vblack] {} -- (6,2,0) node [vblack] {} -- (7,2,0) node [vblack] {};
				\strand[thick] (0,1,1) node [vblack] {} -- (1,1,1) node [vblack] {} -- (2,1,1) node [vblack] {} -- (2,2,1) node [vblack] {} -- (3,2,1) node [vblack] {} -- (3,2,0) node [vblack] {} -- (2,2,0) node [vblack] {} -- (1,2,0) node [vblack] {} -- (1,2,1) node [vblack] {} -- (0,2,1) node [vblack] {};
				\draw (0,0,1) node [vgrey] {};
				\draw (0,2,0) node [vgrey] {};
				\draw (7,0,1) node [vgrey] {};
				\draw (7,1,1) node [vgrey] {};
			\end{knot}
		\end{tikzpicture}
		\caption{If a polygon has a hidden 2-section, we can decompose it into two closed polygons. }
		\label{Fig:hidden2sectiondecomp}
    \end{figure}
We classify U-shapes and their corresponding $-2$-moves into three types:
Type I) U-shapes in the $\pm x$ direction; Type II) U-shapes that lie entirely in a hinge; Type III) U-shapes in the $\pm y$ or $\pm z$ direction with the bottom edge in a section.
Note that the only $-2$-move that removes edges from a section are Type I, hence hidden 2-sections only involve a Type I $-2$-move.

Related to establishing property a) for the $P_i$'s, we first prove the following result:

\begin{lem}\label{lem:shifted_4plat}
	A sufficient condition for an embedding of a link in $\tube^*$ to have a shifted diagram which is a 4-plat diagram is that the embedding only has 4-sections and, except possibly for the first and last section, no sections are hidden 2-sections. For embeddings with span greater than 1, a hidden 2-section in the first section can only involve a $+x$  direction U-shape and one in the last section can only involve a $-x$ direction U-shape.
\end{lem}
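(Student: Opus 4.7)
The plan is to extract the shifted diagram directly from the lattice data and check it matches the definition of a 4-plat diagram from Section~\ref{subsec:4-plats}. Recall that a 4-plat diagram consists of a 4-braid (four strands running strictly in the $x$-direction, with crossings produced by elementary braids $\sigma_i^{\pm 1}$) with one of the closures $[_i, ]_j$ at each end. So I need to extract (a) four coherent strands in the interior, (b) a crossing pattern at each hinge that corresponds to a product of elementary 4-braids, and (c) a left-end and a right-end closure that pair up the four endpoints in one of the two allowed ways.

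First I would handle the interior. Under the hypothesis that every section is a 4-section, each plane $x = k - \tfrac12$ is pierced by exactly four $x$-edges, and projecting these to the $xy$-plane according to the shifted-diagram convention of \cite[Definition~3]{minsteptube} yields four parallel strands through that plane. For the 4-braid structure to be genuine, these four strands must remain ``transverse'' all the way through the interior; this is where the no-interior-hidden-2-section hypothesis enters. A Type~II or Type~III U-shape cannot reduce a 4-section to a 2-section (only Type~I moves delete $x$-edges), so the only way an interior 4-section could fail to contribute four honest strands to the shifted diagram is if it were a hidden 2-section (a Type~I U-shape capable of merging two of the four strands into a trivially closed bigon inside the diagram). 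The hypothesis rules this out, so each interior hinge contributes a well-defined elementary braid on four strands, and the concatenation of all interior hinges gives a 4-braid word $w$.

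Next I would handle the two ends. For span-one embeddings the statement degenerates into a single section and can be checked by direct enumeration, so assume the span is at least $2$. At the left end the embedding must close off within the hinges at $x = 0$, and the hypothesis permits a hidden 2-section in the first section only via a $+x$-directed Type~I U-shape. I would verify, by examining all possible configurations of the two half-edges entering the first hinge together with the edges of that hinge, that such a $+x$-directed U-shape is exactly a lattice realization of one of the closures $[_1$ or $[_2$: the U opens into the tube, pairing two of the four left endpoints through a single $x=0$ arc while the other two continue as strands of the interior braid. Symmetrically, a $-x$-directed U-shape in the last section is a lattice realization of one of the closures $]_1$ or $]_2$. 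Thus the shifted diagram has the form $[_i w ]_j$, which is a 4-plat diagram by definition.

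The main obstacle is the case analysis for the end hinges: one has to enumerate what the first (respectively last) hinge can look like when it hosts a $+x$- (respectively $-x$-) directed Type~I U-shape consistent with the shifted-diagram convention, and confirm that in every such case the four strand-ends pair up into one of the four admissible closures $[_1, [_2, ]_1, ]_2$ with no spurious crossings or extra components. Once that bookkeeping is done, together with the interior argument above the shifted diagram is manifestly of the form $[_i w ]_j$ for a 4-braid word $w$, as required.
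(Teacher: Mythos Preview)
Your interior--plus--ends decomposition is reasonable, but there are two genuine gaps.

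\textbf{The interior step is asserted, not proved.} You write that ``the only way an interior 4-section could fail to contribute four honest strands is if it were a hidden 2-section,'' but this is exactly the content of the lemma and needs an argument. The paper's proof does this by a vertex count specific to $\tube^*$: if a strand has a local $x$-maximum at an interior hinge $x=j$, that maximum occupies at least two of the six hinge vertices and contributes no edges to the right; but since the section at $x=j+\tfrac12$ is a 4-section, four of the six hinge vertices must send edges to the right. Hence the maximum occupies \emph{exactly} one hinge edge, and together with its two incoming $x$-edges it is a $-x$ Type~I U-shape, forcing a hidden 2-section at $x=j-\tfrac12$. This counting is where the $2\times1$ geometry enters, and without it your implication is unjustified.

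\textbf{The end argument misreads the hypothesis.} You treat the allowed $+x$ (resp.\ $-x$) U-shape in the first (resp.\ last) section as the \emph{mechanism} that realises the closure $[_i$ (resp.\ $]_j$). It is not. The closures come from the arcs in the hinges $x=0$ and $x=m$, which pair up the four strand-endpoints regardless of whether the adjacent section happens to be a hidden 2-section. The second sentence of the lemma is there for a different reason: it rules out a $-x$ U-shape in the first section (bottom at $x=1$), which would be a local $x$-maximum at the \emph{interior} hinge $x=1$; the counting argument above would then only produce a hidden 2-section back at $x=\tfrac12$, which the main hypothesis permits, so this case must be excluded separately. Symmetrically for the last section. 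Your case analysis of end hinges is therefore aimed at the wrong target, and you also do not address the (generic) situation where the first section is \emph{not} a hidden 2-section at all. Once you have ``no local extrema at $x=1,\dots,m-1$'' the four strands are monotone and the shifted diagram is automatically a 4-plat; no separate end analysis is needed.
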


\begin{proof} 
	In $\tube^*$, a link embedding with span 0 or 1 and having only 4-sections, is either the unknot or the 2-component unlink and thus its shifted diagram is a 4-plat diagram. Otherwise, suppose $P$ is an embedding of a link in $\tube^*$ with span $m\geq 2$, having only 4-sections, having no $-x$  U-shape at $x=1/2$ and no $+x$ U-shape at $x=m-1/2$, and for $m>2$, having no hidden 2-sections at the half-integer planes $x=j+1/2$, $j=1,...,m-2$.
	$P$ has 4 ``strings'' (one for each edge in its first section) leaving from the plane $x=0$ and 4 strings ending in the plane $x=m$. If no string achieves a local $x$-maximum (i.e.\ locally highest $x$-value) or $x$-minimum at any integer plane $x=j$, $j=1,...,m-1$, then there are exactly 4 strings that extend from $x=0$ to $x=m$ and they can be represented by a 4-braid. Thus the associated shifted diagram must be a 4-plat diagram. 
 
	Hence we only need to establish that $P$ contains no local $x$-maxima or minima. Note that  $P$ has a local $x$-maximum (minimum) at $x=j$ only if there is a subwalk of $P$ in the plane $x=j$ whose endpoints are both joined to edges in the previous (next) section, i.e.\ to edges in the section at $x=j-1/2$ ($x=j+1/2$). Suppose to the contrary that $P$ contains a local $x$-maximum (minimum) at the plane $x=j$, $j\in\{1,...,m-1\}$. By the definition of a local $x$-maximum (minimum),  the $x$-maximum (minimum) occupies at least 2 vertices and one edge of the 6-vertex hinge at $x=j$, there are two edges entering (leaving) these vertices from the left (right) and there are no edges leaving these vertices to the right (left). Thus, for there to be a 4-section at $x=j+1/2$ ($x=j-1/2$), there must be four edges leaving the hinge at $x=j$ to the right (left) and hence there must be 4 vertices in that hinge that are not involved in the $x$-maximum (minimum). Thus the $x$-maximum (minimum) must occupy exactly one edge of the hinge and it must be part of a $-x$ ($+x$) Type I U-shape.   A Type I $-2$-move can thus be performed and hence $P$ has a hidden 2-section. This is a contradiction. 
\end{proof}

In order to determine the $P_i$'s, the first step is to ensure that there are no 6-sections.  
For an embedding $P$ with 6-sections, let $P'$ denote an embedding without 6-sections obtained from $P$ by ``collapsing'' all boxes containing 6-sections onto a leftmost plane, as illustrated in Figure~\ref{Fig:6-string}. 
Since the strings in the 6-sections are all parallel, this operation does not change the knot or link-type.
\begin{figure}
\centering
		\includegraphics[width=.7\textwidth]{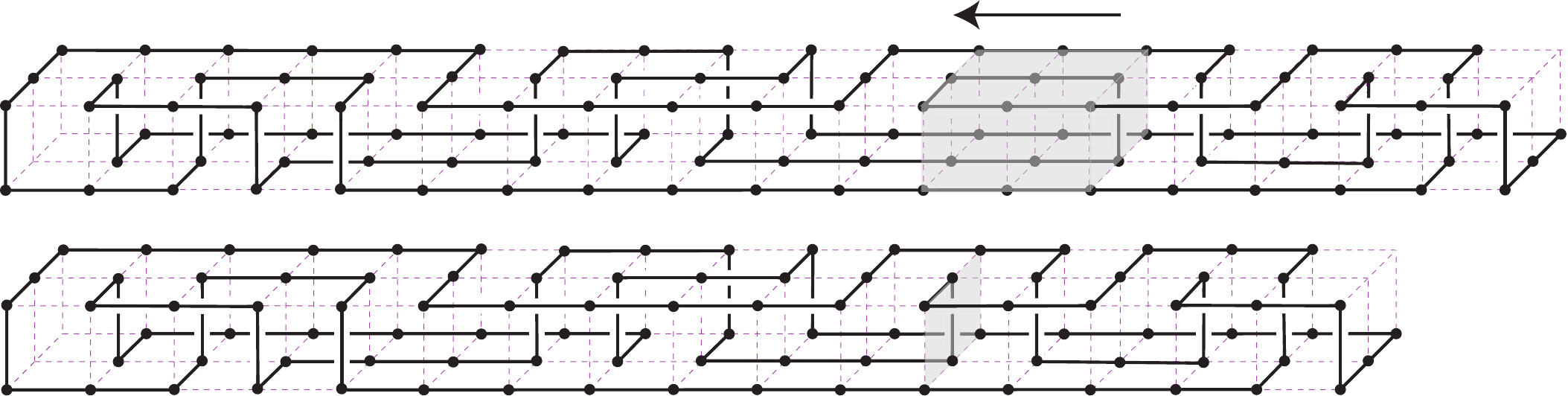}
		\caption{In case a polygon $P$ has sections of 6-strings, we change it into a polygon $P'$ with the same knot type by squashing those parts. }
		\label{Fig:6-string}
\end{figure}
\begin{prop}\label{6-string}
	If $P'$ is unknotted by inserting a finite number of blocks, then $P$ is also unknotted by the same number of insertions.
	The locations of insertions in $P'$ determine the locations for $P$.
\end{prop}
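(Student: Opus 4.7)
The plan is to exploit the fact that the collapsing operation is a topology-preserving straightening of parallel-string cylinders, so that any unknotting sequence of insertions for $P'$ lifts directly to an unknotting sequence for $P$.

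First, I would verify that $P$ and $P'$ represent the same link-type. Since a hinge of $\tube^*$ has exactly six vertices, a $6$-section must consist of six parallel $+x$-directed edges, one emanating from each vertex of the hinge. Consequently, any maximal run of consecutive $6$-sections in $P$ is a straight cylinder of six parallel unknotted strings, and collapsing such a cylinder to a single hinge (as in Figure~\ref{Fig:6-string}) is an ambient isotopy of $\tube^*$. In particular, $P$ and $P'$ have the same link-type, and the portions of $P$ and $P'$ outside the collapsed regions coincide as sub-embeddings, up to $+x$-translation of the pieces lying to the right of each collapsed cylinder.

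Next, there is a natural bijection between the sections of $P'$ and the non-$6$-sections of $P$, obtained by tracking the $+x$-translation caused by collapsing. Given an unknotting sequence of $k$ block insertions for $P'$, each insertion occurs at some section $S$ of $P'$ corresponding to a non-$6$-section $S^*$ of $P$. I would insert the same block at $S^*$ in $P$ (with the appropriate $x$-translation of all pieces to its right). Because the local structure of $P$ near $S^*$ agrees with that of $P'$ near $S$, each such block insertion is valid in $\tube^*$, and the resulting embedding of $P$ differs from the corresponding embedding of $P'$ only by the presence of the straight $6$-section cylinders.

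Finally, the topological effect of each insertion is governed only by the non-cylinder portion, since parallel-string cylinders contribute nothing to the link-type; equivalently, at every stage one can collapse the cylinders back and recover the corresponding intermediate embedding of $P'$. Therefore the $k$ insertions transform $P$ into a polygon whose link-type equals that of $P'$ after the same insertions, namely the unknot, and the total number of insertions is preserved. The only point requiring care is verifying that intermediate embeddings remain self-avoiding and contained in $\tube^*$, but this is immediate because the $6$-section cylinders are disjoint from the insertion sites and behave as inert parallel strings throughout.
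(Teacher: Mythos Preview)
Your proposal is correct and takes essentially the same approach as the paper: both argue that $P$ and $P'$ are isotopic because $6$-sections are just parallel strings, then lift each insertion in $P'$ to the corresponding (shifted) non-$6$-section of $P$ and observe that the resulting embeddings remain isotopic. Your version is somewhat more explicit about why a $6$-section must consist of six parallel edges and about the section bijection, whereas the paper simply writes down the coordinate shift $j/2 \mapsto j/2 + w$ for a collapsed box of span $w$; but the underlying argument is identical.
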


\begin{proof}
	Note that $P$ and $P'$ are isotopic in $\mathbb R^3$.
	For insertions of blocks in $P'$, we consider corresponding insertions of the same set of blocks in $P$.
	For example, suppose $P'$ is obtained by collapsing one box containing 6-sections with span $w$.
	Let $k$ be the $x$-coordinate of the leftmost plane of the box and $\frac{j}{2}$ the $x$-coordinate of the insertion of an $s$-block.
	Then we consider an insertion of the same $s$-block at $x=\frac{j}{2}$ in $P$ if $\frac{j}{2}<k$ and $x=\frac{j}{2}+w$ if $\frac{j}{2}>k$.
	Then the obtained embeddings are also isotopic in $\mathbb R^3$.
\end{proof}

Without loss of generality we assume now that $P$ has no 6-sections.
For this, first consider the case $f_L=1$.   
If $P$ satisfies the conditions of Lemma~\ref{lem:shifted_4plat}
then its shifted diagram is a 4-plat diagram.
Thus, for a more general $P$ we need to first find a sub-embedding $P'$ without 2-sections or hidden 2-sections.   We do this next in two lemmas.
For simplicity we refer to sections of an embedding that are neither first nor last 
as {\it interior} sections.

\begin{lem}
	Let $P$ be a non-split 4-plat 
	embedding. If every section of $P$ is either a 2-section or hidden 2-section then $P$ is an unknot.
	\label{lem:splitP}
\end{lem}

\begin{proof}
	Since every section is either a 2-section or can be reduced to a 2-section by a  $-2$-move, then applying all $-2$-moves makes every section that remains a 2-section and hence $P$ is isotopic to the unknot.  
\end{proof}

Therefore, if an embedding is non-split and non-trivial without 6-sections then it must contain at least one section which is neither a 2-section nor a hidden 2-section.

\begin{lem}
	\label{f_L}
	Let $P$ be an embedding of a  non-trivial non-split  link $L=L_1\#L_2\#\cdots\#L_{f_L}$ in $\tube^*$ without 6-sections. A set of embeddings $P_1,P_2, \dots,P_{f_L}$ can be determined with no interior 2-sections or hidden 2-sections, such that:
	$P_i$ has link-type  $L_i$ and $P_i$ minus its first and last hinge is a sub-block of $P$. 
\end{lem}

\begin{proof}
	We consider first the case $f_L=1$.
	Since $P$ is non-trivial and non-split, it must contain a section which is neither a 2-section nor a hidden 2-section.  
	A sub-block of $P$ will be called {\it suitable} if it contains no 2-sections or hidden 2-sections.  $P$ must have at least one suitable sub-block.  Let $B_1$, \dots, $B_m$ be the maximal (in span) suitable sub-blocks of $P$ ordered from left-to-right according to their occurrence in $P$.   We argue next, by induction on $m$, that we can obtain an embedding $P'$ of $L$ from exactly one of these sub-blocks and it is only different from the original block of $P$ in its left-most and right-most hinges.  
	
	Suppose $m=1$ and let $s$ be the span of $P$. Let $l$ and $r$ be the section numbers of the first and last sections of $B_1$. 
	If $l=1$  and $r=s$  then $P_1=P=B_1$.   
	If $l>1$ ($r<s$),  then the $(l-1)$st ($(r+1)$st) section must be a 2-section or hidden 2-section and hence either it is a 2-section already or there is a type~I $-2$ move  that turns the $(l-1)$st ($(r+1)$st) into a 2-section.  In either case there are two edges $l_1$ and $l_2$ ($r_1$ and $r_2$) in the $(l-1)$st ($(r+1)$st) section that form either the existing 2-section or the resulting 2-section.   
	The endpoints of $l_1$ and $l_2$ ($r_1$ and $r_2$) can then be joined in the leftmost (rightmost) hinge of $B_1$ to form $P_1$.
	If one of $l=1$  or $r=s$ (the  span of $P$)  then $P_1$ is obtained as above using only one section.
	$P_1$ necessarily has link-type $L$ (since the non-suitable blocks on either side of it are closed off into unknots) and its sections are identical to $B_1$ except on the left-most and right-most hinges.  Note that the first and last section of $B_1$ could become  hidden 2-sections in $P_1$ but all other sections are unchanged.  
	
	Let $m>1$ and assume that any embedding of a non-trivial $4$-plat 
	without 6-sections that has fewer than $m$ maximal suitable sub-blocks has a block that yields the appropriate $P_1$. 
	Now consider $P$ a non-trivial $4$-plat 
	without 6 sections with maximal suitable sub-blocks $B_1, \dots, B_m$.  Because the 2-sections or hidden 2-sections between the $B_i$'s are equivalent to connected sum operations and since $L$ is prime, only one of the $B_i$'s will yield the required $P_1$.  Close off $B_1$ into an embedding $P'$ as in the $m=1$ case. Either $P'$ has link-type $L$ or the closed off sequence of blocks (both suitable and unsuitable) to the right of $B_1$ has link-type $L$.  In the former case $P_1=P'$ and otherwise the inductive assumption yields the required $P_1$ from one of the remaining blocks.  Thus by induction on $m$ the result holds for $f_L=1$.
	
	For $f_L>1$, suppose $P$ is an embedding of a  non-split  link $L=L_1\#L_2\#\cdots\#L_{f_L}$ in $\tube^*$ without 6-sections.
	Since $P$ is non-trivial, then it must have maximal suitable sub-blocks $B_1$, $B_2$, \dots,$B_m$.  Since the sub-blocks are either located at the start or end of $P$, or are preceded and followed by 2-sections or hidden 2-sections, by Lemma~\ref{lem:shifted_4plat} they can each be closed off (as in the ${f_L=1}$ case) into embeddings of non-split  $4$-plats (because they have a 4-plat diagram). 
	Thus $m\geq f_L$ and for each prime $L_i$ there must be a corresponding $B_{j_i}$ which when closed off gives $P_i$ with link-type $L_i$. The resulting $P_i$'s have the required properties.
\end{proof}

Since
by the arguments of \cite{minsteptube} we can construct a braid block for each $3$-braid word in Proposition~\ref{prop:braidblock}{}, we can now prove the proposition. 
\begin{proof}[Proof of Proposition \ref{prop:braidblock}]
	There are $15$ ($=\binom{6}{4}$) types of $4$-sections in Figure~\ref{fig:braidblocks} (left). 
	The elementary braids $\sigma_1,\sigma_1^{-1},\sigma_2$ and $\sigma_2^{-1}$ are constructed as span-$3$ braid blocks in $\tube^*$ so that they have the same $4$-section at the ends of the block, as shown in Figure~\ref{fig:braidblocks} (right). By arranging such braid blocks in the order of the letters that appear in $w_0$, a span-$3c$ braid block can be obtained. Moreover, each type of $4$-section can be connected to the $4$-section in the ends of the block by a braid block with span at most $4$ that represents the trivial $4$-braid as shown in Figure~\ref{fig:braidblocks} (left), thus the span-$3c$ braid block can be modified into one with span at most $3c+8$ so that it can be inserted into the $4$-section.
\end{proof}
\begin{figure}[htb]
\centering
		\includegraphics[width=.9\textwidth]{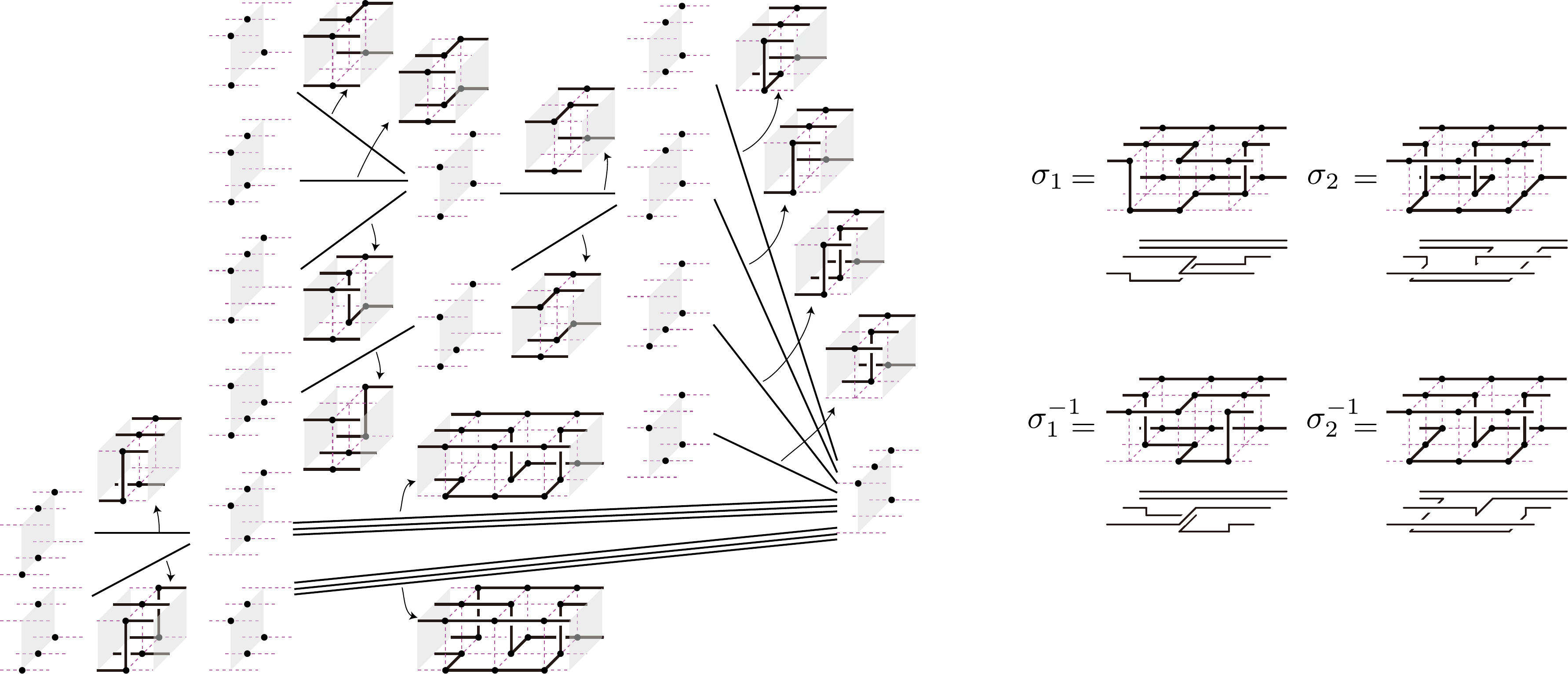}
		\caption{$15$ types of $4$-sections and span-$3$ braid blocks representing the elementary braids.}
		\label{fig:braidblocks}
\end{figure}		

\begin{proof}[Proof of Theorem~\ref{thm-insertion}]
	By Proposition \ref{6-string} we assume, without loss of generality, that an embedding $P$ of non-split link $L$  is without 6-sections.  From Lemma~\ref{lem:splitP}, for each prime factor $L_i$ of $L$ we can find a sub-block $B_i$ of $P$ which is associated with an embedding $P_i$ of  $L_i$ with no interior 2-sections or hidden 2-sections. Here $B_i$ only differs from $P_i$ in its first and last hinge.  
	Due to this, by Lemma~\ref{lem:shifted_4plat},  the shifted diagram of $P_i$ is a 
	4-plat diagram. 
	By 
	Theorem~\ref{thm:unknottinginsertion}{}, we can change this diagram into a diagram of the unknot by 
	inserting a suitable $4$-braid in one place. 
	By Proposition \ref{prop:braidblock}{}, we can construct an $s$-block corresponding to any sequence of $\sigma_i$'s and hence any $4$-braid. 
	The corresponding change in $P_i$ can be realized by an insertion of such an $s$-block.  The insertion is not in the first or last hinge and hence can be realized as an insertion in $B_i$.  
\end{proof}

\section{Lower bound: A pattern theorem for unknots in \texorpdfstring{$\tube^*$}{T}%
}
\label{sec:pattern} 

\tikzset{vblack/.style={circle, fill=black, inner sep=1.5pt}}
\tikzset{vblue/.style={circle, fill=blue, inner sep=1.5pt}}
\tikzset{vred/.style={circle, fill=red, inner sep=1.5pt}}
\tikzset{vgrey/.style={circle, fill=gray, inner sep=1.5pt}}
\tikzset{vorange/.style={circle, fill=orange, inner sep=1.5pt}}
\tikzset{vempty/.style={circle, draw, inner sep=1.4pt}}


%

\label{sec:newlower}
Towards proving the \textit{lower bound}, in this section we prove a \emph{pattern theorem} for unknot polygons using information from exact transfer-matrix calculations for all polygons (regardless of knot-type) in the tube $\tube^*$. 
In essence, a pattern theorem  states that for some particular type of lattice object (polygon, path, tree, etc.), a typical sample of sufficiently large size contains many copies of a small piece (a \emph{pattern}). 

Pattern theorems have been used previously, for example, to prove the FWD conjecture for polygons in tubes \cite{Atapour09, Sot98} and to study linking probabilities for the case of two polygons which span a tube \cite{Atapour10}.  Here we present the first proof of a pattern theorem for unknot polygons.  
In particular we show that for $n$ sufficiently large, all but exponentially few $n$-edge unknot polygons (unknots in $\tube^*$) contain a density ($\epsilon n$) of sections with exactly two edges (called 2-sections).

\begin{thm}\label{thm:density_of_2strings-nonHam}
Let $p_{\tube^*,n}(0_1,\leq\! k)$ be the number of unknots of length $n$ in $\tube^*$ which contain at most $k$ 2-sections. Then there exists an $\epsilon > 0$ such that
\begin{equation}\label{eqn:unknot_density_2section}\begin{split}
\limsup_{n\to\infty} \frac1n \log p_{\tube^*,n}(0_1,\leq\! \epsilon n) &< 
\log\mu_{\tube^*,0_1},
\end{split}\end{equation}
where $n$ is taken through multiples of 2 and $\mu_{\tube^*,0_1}$ is the unknot exponential growth constant as in~\eqref{unknottubegrowthconstant}.
\end{thm}

Theorem~\ref{thm:density_of_2strings-nonHam} leads to  a  general pattern theorem for unknot polygons (Corollary~\ref{thm:density_of_connectsumpatterns}). 
The method of proof 
follows that of \cite[Theorem 2.1]{Madras99} and we give the details later in Section~\ref{ssec:generalpatternthm}. 

\begin{cor}[Corollary of Theorem~\ref{thm:density_of_2strings-nonHam}.]
	\label{thm:density_of_connectsumpatterns}
	Let $P$ be an unknot connected sum pattern (defined in Section~\ref{sec:background}) in $\tube^*$.
	Let $p_{\tube^*,n}(0_1,P,\leq\! k)$ be the number of unknots of length $n$ in $\tube^*$ which contain at most $k$ $x$-translates of $P$. Then there exists an $\epsilon_P > 0$ such that
	\begin{equation}\label{eqn:unknot_density_P}\begin{split}
			\limsup_{n\to\infty} \frac1n \log p_{\tube^*,n}(0_1,P, \leq\! \epsilon_P n) &< \log\mu_{\tube^*,0_1}
	\end{split}\end{equation}
	where $n$ is taken through multiples of 2.
\end{cor}

The proof of Theorem~\ref{thm:density_of_2strings-nonHam} has two separate parts, which we summarize below. 
It will be convenient to refer to logarithms of growth constants. Hence,  given any growth constant such as $\mu_{\tube^*}$ or $\mu_{\tube^*,0_1}$,  we define a corresponding growth {\it rate},
$\kappa_{\tube^*}\equiv\log \mu_{\tube^*}$ or $\kappa_{\tube^*}(0_1)\equiv\log \mu_{\tube^*,0_1}$.  

\paragraph{Unknots with no 2-sections are exponentially rare:}

The first part involves showing that the growth rate of unknots with \emph{no 2-sections} is strictly less than that of all unknots. This means that unknots with no 2-sections are exponentially rare. 

Note that for lattice models characterized by \emph{finite transfer matrices}, 
showing that a pattern's non-occurrence  is exponentially rare 
follows from the fact that the dominant eigenvalue(s) of the transfer-matrix must decrease when the pattern in question is forbidden. (A similar idea can sometimes be applied in the absence of a finite transfer-matrix, when one knows something about the critical behavior of the generating function~\cite{Sum88}.) However, unknots in a lattice tube \emph{do not} have a finite transfer-matrix, so we are forced to take a quite different approach.

This part of the proof has two steps. In the first step, detailed in Section~\ref{ssec:no_2_sections}, we compute an upper bound on $\hat\kappa_{\tube^*}$, the growth rate of polygons with no 2-sections.

\begin{lem}\label{lem:tube*_no2sections_upperbound}
    \begin{equation}
        \hat\kappa_{\tube^*} < 0.446287.
    \end{equation}
\end{lem}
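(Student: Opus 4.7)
The plan is to set up an edge-weighted transfer matrix $T(t)$ enumerating polygons in $\tube^*$ section by section, delete the rows and columns indexed by 2-section states to obtain a principal submatrix $\hat T(t)$, and then verify numerically that at $t_0 = e^{-0.446287}$ the spectral radius of $\hat T(t_0)$ is strictly less than $1$.

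Following the tube-transfer-matrix setup already used to compute $\mu_{\tube^*}$ and $\mu_{\tube^*,0_1}$ (see \cite{BES19, Eng2014Thesis}), a state for $\tube^*$ is indexed by (i) the subset of the six cross-section vertices carrying an outgoing $x$-edge, necessarily of even cardinality $2$, $4$, or $6$, together with (ii) a matching of those active vertices recording how they are pairwise joined by the polygon arcs to the left of the current cross-sectional plane. Slab-level transitions weighted by $t$ raised to the number of edges added in one slab define the entries of $T(t)$. A state is a 2-section state exactly when it has two active vertices, so the principal submatrix $\hat T(t)$ obtained by deleting those rows and columns enumerates precisely the slab transitions that are compatible with every section having four or six edges.

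This yields a rational generating function
\begin{equation}
\hat P(t) \;=\; u(t)^{\top}\bigl(I - \hat T(t)\bigr)^{-1} v(t),
\end{equation}
with boundary-cap vectors $u(t), v(t)$ encoding the leftmost and rightmost half-turns of the polygon, restricted to non-2-section boundary states. Standard transfer-matrix theory then identifies the radius of convergence of $\hat P(t)$ as the smallest positive $t^{\star}$ at which the spectral radius $\rho(\hat T(t))$ equals $1$, and hence $\hat\kappa_{\tube^*} = -\log t^{\star}$. It thus suffices to construct $\hat T(t_0)$ explicitly at $t_0 = e^{-0.446287}$ and certify $\rho(\hat T(t_0)) < 1$, for instance via a Perron-eigenvector power iteration carried out under interval-arithmetic control, or by verifying that $\det(I - \hat T(t_0))$ has the correct sign.

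The main obstacle is the combinatorial bookkeeping needed to enumerate the link states and the slab-level transitions for $\tube^*$. This is the same enumeration underpinning the existing numerical determination of $\mu_{\tube^*}$, modified only by the removal of the 2-section states, so no new conceptual difficulty arises. Because the bound $0.446287$ need only be verified to a handful of significant digits, double-precision arithmetic with a controlled safety margin is enough to certify the strict inequality.
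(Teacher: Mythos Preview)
Your proposal is correct and follows essentially the same approach as the paper: build the transfer matrix for polygons in $\tube^*$, restrict to the submatrix $\hat T(x)$ excluding 2-section states, and certify that its spectral radius is below $1$ at a well-chosen $x$. The paper's certification is slightly simpler than what you sketch: it evaluates $\hat T$ at the exact value $x=0.64$ (so that $-\log 0.64 = 0.446287\ldots$) and uses the elementary bound $\rho(M)\le \lVert M^{k}\rVert_\infty^{1/k}$ with $k=10$, obtaining a max-row-sum below $0.99485$, rather than invoking interval-arithmetic power iteration or a determinant sign check.
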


Lemma~\ref{lem:tube*_no2sections_upperbound} is proved in Section~\ref{ssec:no_2_sections} using a standard upper bound for the spectral radius of a matrix, 
\begin{equation}
    \tau_M \leq \lVert M^k\rVert^{1/k}
\end{equation}
for any $k\geq1$, where $\lVert\cdot\rVert$ is any consistent matrix norm. We use $\lVert \cdot \rVert_\infty$, which is the maximum absolute row sum. Note that by inclusion, Lemma~\ref{lem:tube*_no2sections_upperbound} also provides an upper bound for $\hat\kappa_{\tube^*}(0_1)$,
the growth rate of unknots with {no 2-sections}.

For the second step, we establish a lower bound on $\kappa_{\tube^*}(0_1)$.
\begin{lem}\label{lem:tube*_unknots_lowerbound}
\begin{equation}
    \kappa_{\tube^*}(0_1) \geq 0.620044.
\end{equation}
\end{lem}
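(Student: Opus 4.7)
My plan is to exhibit a sub-family $\mathcal{U}$ of unknot polygons in $\tube^*$ which (i) provably consists entirely of unknots and (ii) admits a finite transfer matrix $T_0$ whose spectral radius satisfies $\tau(T_0) \geq e^{0.620044}$. Since $\lvert\mathcal{U}_n\rvert \leq p_{\tube^*,n}(0_1)$ for every even $n$, this immediately yields $\kappa_{\tube^*}(0_1) \geq \log \tau(T_0) \geq 0.620044$. A natural choice for $\mathcal{U}$ is the set of polygons whose shifted diagram (Section~\ref{subsec:4-plats}) carries no crossings, equivalently polygons whose projection onto a coordinate plane is an embedded planar polygon; every such polygon is an unknot, since a link diagram with zero crossings represents an unknot. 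Moreover this defining property is \emph{local}---it can be verified from the edges crossing two consecutive sections---so $\mathcal{U}$ is generated by a finite-state restriction of the full $\tube^*$ transfer-matrix.

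Next I would write down $T_0$ explicitly. Its states index compatible edge-configurations at a given $x$-plane together with the bookkeeping required to certify planarity (e.g.\ a non-crossing matching of the currently open strands), and its transitions record the legal placements of edges joining adjacent planes. Standard transfer-matrix theory then yields $\lvert\mathcal{U}_n\rvert = \Theta(\tau(T_0)^n)$ as $n \to \infty$ through even integers. To bound $\tau(T_0)$ from below I would use the Perron--Frobenius inequality
\[
    \tau(T_0) \;\geq\; \frac{\lVert T_0^{k} v\rVert_{1}}{\lVert T_0^{k-1} v\rVert_{1}}
\]
valid for any non-negative vector $v$ and any $k \geq 1$; iterating $T_0$ a modest number of times on $v = (1,\dots,1)^{\top}$ in exact rational arithmetic produces a rigorous lower bound converging to $\tau(T_0)$. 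An equivalent self-contained route is an explicit periodic construction: a fixed unknot block of span $L$ admitting $M$ distinct compatible extensions yields at least $M^{k}$ unknot polygons of length $O(kL)$ and hence $\kappa_{\tube^*}(0_1) \geq (\log M)/L$, a bound that can be pushed to the target $0.620044$ by optimising over a modest catalogue of blocks.

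The main obstacle is calibrating $\mathcal{U}$. A too-restrictive choice (strictly monotone polygons, or polygons whose sections are all 2-sections) falls well short of the threshold $0.620044$, while a too-liberal choice risks admitting non-trivial knots and invalidating the unknot certificate. The balance likely lies in admitting a menu of short \emph{unknot-preserving} blocks joined along 2-sections---essentially running the insertion mechanism of Proposition~\ref{prop:knotpattern} in reverse with only trivial insertions---so that the unknot property follows from a purely local check while the state space remains small enough for the eigenvalue computation to yield the required numerical bound.
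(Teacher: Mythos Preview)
Your proposal takes a genuinely different route from the paper, and as written it has a real gap. The paper's argument is much simpler: concatenating two unknot polygons in $\tube^*$ gives another unknot with at most $6$ extra edges, so $p_{\tube^*,m}(0_1)\,p_{\tube^*,n}(0_1)\leq p_{\tube^*,m+n+6}(0_1)$. Superadditivity of $\log p_{\tube^*,n-6}(0_1)$ then yields
\[
\kappa_{\tube^*}(0_1)=\sup_{n\geq 0}\tfrac{1}{n}\log p_{\tube^*,n-6}(0_1),
\]
and a single exact count $p_{\tube^*,24}(0_1)=119{,}796{,}593$ (using that every polygon in $\tube^*$ of length $\leq 34$ is an unknot, so this is just $p_{\tube^*,24}$) gives the bound $0.620044$ at $n=30$. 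No subfamily, no restricted transfer matrix, no eigenvalue estimate.

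Your strategy could in principle succeed, but you never verify that any concrete subfamily $\mathcal{U}$ achieves growth rate $\geq 0.620044$; this is the entire content of the lemma, and you leave it as a calibration problem to be ``optimised over a modest catalogue of blocks''. The specific family you name---polygons whose shifted diagram has no crossings---is quite restrictive, and there is no reason given to expect its growth rate to fall within $0.21$ of the full $\kappa_{\tube^*}\approx 0.827$; you yourself flag that too-restrictive choices ``fall well short''. The Perron--Frobenius iteration and the periodic-block alternative are only sketched, not carried out. By contrast, the superadditivity trick sidesteps calibration entirely: \emph{any} finite exact count of unknots produces a rigorous lower bound, and one enumerates until the bound suffices. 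That is both easier to execute and easier to certify than constructing and analysing a bespoke transfer matrix.
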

The idea of the proof of Lemma~\ref{lem:tube*_unknots_lowerbound} is as follows. Because unknots in $\tube^*$ can be concatenated to form bigger unknots, we have
\begin{equation}
p_{\tube^*,m}(0_1)p_{\tube^*,n}(0_1) \leq p_{\tube^*,m+n+6}(0_1),
\end{equation}
where the $(+6)$ corresponds to the number of edges that must be inserted at the concatenation point. So $\log p_{\tube^*,n-6}(0_1)$ is a \emph{superadditive} sequence, and it follows that
\begin{equation}
\kappa_{\tube^*}(0_1) = \lim_{n\to\infty} \frac1n \log p_{\tube^*,n-6}(0_1) = \sup_{n\geq 0}\frac1n \log p_{\tube^*,n-6}(0_1).
\end{equation}
Brute-force enumeration yields $p_{\tube^*,24}(0_1) = 119796593$, from which the lower bound in the lemma follows. (It is known~\cite{minsteptube} that every polygon of length $\leq34$ in $\tube^*$ is an unknot, so in fact $p_{\tube^*,24}(0_1) = p_{\tube^*,24}$.)

Combining Lemmas~\ref{lem:tube*_no2sections_upperbound} and~\ref{lem:tube*_unknots_lowerbound}, we have
\begin{equation}\label{eqn:unknots_no_2strings_bounds}
\hat{\kappa}_{\tube^*}(0_1) \leq \hat{\kappa}_{\tube^*} < 0.446287 < 0.620044 \leq \kappa_{\tube^*}(0_1).
\end{equation}

\paragraph{Unknots have a positive density of 2-sections:}

The second part of the proof
relies crucially on the next lemma. It demonstrates that we can remove all the 2-sections from an unknot while controlling the number of new edges added.
\begin{lem}\label{lem:unknots_tube*_2strings_remove}
\begin{equation}\label{eqn:remove_2strings}
p_{{\tube^*},n}(0_1,\leq\! k) = \sum_{t=0}^k p_{{\tube^*},n}(0_1,t) \leq \sum_{t=0}^k 2^t \binom{\frac{n}{2}}{t}p_{\tube^*,n+Et}(0_1,0)
\end{equation}
for a constant $E$.
\end{lem}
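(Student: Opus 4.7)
The plan is to build, for each $t \leq k$, an injection
$$\Phi_t : \mathcal{P}_t \hookrightarrow \mathcal{Q}_{n+Et} \times L_t,$$
where $\mathcal{P}_t$ denotes the set of $n$-edge unknot polygons in $\tube^*$ with exactly $t$ 2-sections, $\mathcal{Q}_{n+Et}$ denotes the set of $(n+Et)$-edge unknot polygons in $\tube^*$ with \emph{no} 2-section, and $L_t$ is a label set of cardinality $2^t\binom{n/2}{t}$. Summing the consequent bound $|\mathcal{P}_t| \leq 2^t\binom{n/2}{t}\,p_{\tube^*,n+Et}(0_1,0)$ over $t=0,1,\ldots,k$ then yields \neweqref{eqn:remove_2strings}.

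First I would construct, for each of the (at most two) local types of 2-section admissible in $\tube^*$, a small unknot sub-pattern $B^{(\tau)}$ of fixed edge count $E$ that can be inserted at such a 2-section so that (i)~the 2-section is replaced by a wider section and no new 2-section is created in its neighbourhood, and (ii)~the resulting polygon is $\pi$ connect-summed with an unknot, hence still an unknot. The existence of such a block is a direct analogue, in the trivial-link setting, of the braid-block construction of Proposition~\ref{prop:braidblock}; the verification is a finite case check on the narrow $2\times1$ cross-section. Given $\pi\in\mathcal{P}_t$ whose 2-sections occur at section-indices $i_1<\cdots<i_t$ (a $t$-subset of the at most $n/2$ sections of $\pi$) with local types $\tau_1,\ldots,\tau_t\in\{1,2\}$, I set $\pi'$ equal to the polygon obtained by inserting $B^{(\tau_j)}$ at the $j$-th 2-section, let $\ell=(\{i_1,\ldots,i_t\},(\tau_1,\ldots,\tau_t))$, and define $\Phi_t(\pi)=(\pi',\ell)$.

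Injectivity of $\Phi_t$ follows because every insert has the same fixed span $s$: the $j$-th insertion occupies a fixed-span block of $\pi'$ whose starting section-index is the deterministic shift $i_j+(j-1)(s-1)$ of the value $i_j$ read off from $\ell$; excising each block at its known location and reinserting a 2-section of the recorded type $\tau_j$ recovers $\pi$ uniquely. The main technical obstacle is the first step: one must produce explicit unknot inserts inside the highly constrained $\tube^*$ geometry that simultaneously kill every admissible local 2-section type, preserve the unknot type on closure, and avoid creating any new 2-sections on either side of the insertion. Once such a family of blocks is exhibited, the remainder of the argument is routine bookkeeping.
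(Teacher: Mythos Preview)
Your proposed local insertion cannot work as described. A block inserted at a 2-section must connect to exactly two half-edges on each of its boundary half-sections, so the section of the resulting polygon immediately to the left of the inserted block (and likewise immediately to the right) still contains exactly two $x$-edges and is therefore itself a 2-section. In other words, any $s$-block compatible with insertion at a 2-section necessarily begins and ends with half of a 2-section; inserting it removes one 2-section only to create two new ones at the interfaces. No finite case check will produce the block $B^{(\tau)}$ you postulate, because the obstruction is structural, not combinatorial. (Separately, there are $\binom{6}{2}=15$ possible edge-pairs for a 2-section in $\tube^*$, not ``at most two'', so your accounting for the factor $2^t$ via local types is also incorrect.)

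The paper's proof avoids this obstruction by a genuinely different mechanism: it \emph{cuts} the unknot polygon at each of its $t$ 2-sections into $t+1$ closed polygons with no 2-sections (Lemma~\ref{lem:split_at_first_2string}, which modifies the adjacent hinges to cap off each piece), and then \emph{concatenates} those pieces back into a single polygon via a bridge construction that contains only 4-sections (Lemma~\ref{lem:concat_no_2strings}). The factor $2^t$ arises not from a classification of 2-section types but from a two-fold ambiguity in reversing the cutting step. The key idea you are missing is that eliminating a 2-section requires altering the hinges on either side of it, which cannot be achieved by a pure insertion and instead forces the split-and-rejoin manoeuvre.
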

The proof of Lemma~\ref{lem:unknots_tube*_2strings_remove} is given in Section~\ref{ssec:eliminating_2secs}. We  break the proof into two parts (sub-lemmas).
We first show how to take a polygon of length $n$ with $t$ 2-sections and break it apart into $t+1$ polygons with no 2-sections, with total length $n+2Dt$ for a constant $D$. We then show 
how to join those polygons back together into one large polygon of length $n+(2C+2D+2)t$ with no 2-sections, for another constant $C$. This process is reversible, with knowledge of where the splits and joins occurred ($\binom{\frac{n}{2}}{t}$ is an upper bound on the number of possibilities) and knowledge of which of a pair of possible choices at each cut formed the original 2-section (at most two choices). Here $E=2C+2D+2$.

\begin{proof}[Proof of Theorem~\ref{thm:density_of_2strings-nonHam}]
Take~\eqref{eqn:remove_2strings} with $k=\epsilon n$ and $\epsilon<\frac14$. Then the last summand on the RHS is the largest, so
\begin{equation}
p_{\tube^*,n}(0_1,\leq\! \epsilon n) \leq (\epsilon n+1)2^{\epsilon n} \binom{\frac{n}{2}}{\epsilon n} p_{\tube^*,n(1+\epsilon E)}(0_1,0).
\end{equation}
Take logs, divide by $n$ and take the $\limsup$:
\begin{equation}
\limsup_{n\to\infty}\frac1n \log p_{\tube^*,n}(0_1,\leq\! \epsilon n) \\ \leq -\epsilon\log\epsilon-\left(\frac12-\epsilon\right)\log(1-2\epsilon)+(1+\epsilon E)\hat{\kappa}_{\tube^*}(0_1).
\end{equation}
As $\epsilon\to0$, the RHS approaches $\hat{\kappa}_{\tube^*}$ and by~\eqref{eqn:unknots_no_2strings_bounds} we know that $\hat{\kappa}_{\tube^*}(0_1) \leq \hat{\kappa}_{\tube^*}< \kappa_{\tube^*}(0_1)$, so there must be an $\epsilon_T \in (0,1/4)$ which satisfies~\eqref{eqn:unknot_density_2section}. Here we have used the fact that 
	$\lim_{n\to\infty} n^{-1}{\log \binom{an}{bn}}=a\log a - b\log b - (a-b)\log (a-b)$ provided $0<b<a$. 
\end{proof}




Our approach to
adding the details to the proof of
Theorem~\ref{thm:density_of_2strings-nonHam} will be as follows. In Section~\ref{ssec:TMs} we will define the transfer matrix for polygons in (general) $\tube$ and explain its connection with the generating function for $p_{\tube^*,n}$ and its exponential growth rate. In Section~\ref{ssec:no_2_sections} we show that these concepts also apply to polygons in $\tube^*$ with no 2-sections. In Section~\ref{ssec:rigorous_bounds} we calculate two rigorous bounds: an upper bound on the growth rate of polygons in $\tube^*$ with no 2-sections (using the transfer matrix), and a lower bound on the growth rate of unknots in $\tube^*$ (using exact enumeration data). Importantly, these bounds establish that polygons with no 2-sections have a smaller growth rate than unknots. Finally in Section~\ref{ssec:eliminating_2secs}, we show that the 2-sections of \emph{any} polygon can be removed in a systematic (and reversible) way while controlling the number of edges that must be added. This is then used to prove that polygons with a sufficiently small (but positive) density of 2-sections have the same growth rate as those with none at all, and using the bounds from Section~\ref{ssec:rigorous_bounds}, the theorem follows.

In this section, it will be useful to label the 6 possible $(y,z)$ coordinates within the tube by $\mathcal V = \{a,b,c,d,e,f\}$ as in Figure~\ref{fig:abcdef}.

\begin{figure}%
	\centering
	\begin{subfigure}{\textwidth}
		\centering
		\begin{tikzpicture}[scale=1.2]
			\tikzset{vert/.style={circle, fill, inner sep=2.5pt}}
			\draw (0,0,0) node [vert, label=right:$a$] {};
			\draw (0,0,1) node [vert, label=right:$b$] {};
			\draw (0,1,0) node [vert, label=right:$c$] {};
			\draw (0,1,1) node [vert, label=right:$d$] {};
			\draw (0,2,0) node [vert, label=right:$e$] {};
			\draw (0,2,1) node [vert, label=right:$f$] {};
			
			\draw[ultra thick, <->] (2,1,1) node [label=above:$z$] {} to (2,1,0) -- (3,1,0) node [label=right:$x$] {};
			\draw[ultra thick, ->] (2,1,0) -- (2,2.5,0) node [label=above right:$y$] {};
		\end{tikzpicture}
		\caption{}
		\label{fig:abcdef}
	\end{subfigure}
	
	\vspace{5ex}
	
	\begin{subfigure}{\textwidth}
		\centering
		\begin{tikzpicture}
			\begin{knot}[consider self intersections=true, clip width=3, clip radius=3pt, end tolerance=1pt]
				\strand[very thick, black] (0,0,0) node [vblack] {} -- (1,0,0) node [vblack] {} -- (2,0,0) node [vblack] {} -- (2.5,0,0) {};
				\strand[very thick, blue] (2.5,0,0) -- (3,0,0) node [vblue] {} -- (3.5,0,0);
				\strand[very thick, black] (3.5,0,0) -- (4,0,0) node [vblack] {} -- (4,0,1) node [vblack] {} -- (5,0,1) node [vblack] {} -- (5,0,0) node [vblack] {} -- (5,1,0) node [vblack] {} -- (4,1,0) node [vblack] {} -- (3.5,1,0);
				\strand[very thick, blue] (3.5,1,0) -- (3,1,0) node [vblue] {} -- (2.5,1,0);
				\strand[very thick, black] (2.5,1,0) -- (2,1,0) node [vblack] {} -- (2,2,0) node [vblack] {} -- (1,2,0) node [vblack] {} -- (1,1,0) node [vblack] {} -- (0,1,0) node [vblack] {} -- (0,2,0) node [vblack] {} -- (0,2,1) node [vblack] {} -- (1,2,1) node [vblack] {} -- (1,1,1) node [vblack] {} -- (2,1,1) node [vblack] {} -- (2,2,1) node [vblack] {} -- (2.5,2,1);
				\strand[very thick, blue] (2.5,2,1) -- (3,2,1) node [vblue] {} -- (3,2,0) node [vblue] {} -- (3.5,2,0);
				\strand[very thick, black] (3.5,2,0) -- (4,2,0) node [vblack] {} -- (5,2,0) node [vblack] {} -- (5,2,1) node [vblack] {} -- (4,2,1) node [vblack] {} -- (4,1,1) node [vblack] {} -- (3.5,1,1);
				\strand[very thick, blue] (3.5,1,1) -- (3,1,1) node [vblue] {} -- (3,0,1) node [vblue] {} -- (2.5,0,1);
				\strand[very thick, black] (2.5,0,1) -- (2,0,1) node [vblack] {} -- (1,0,1) node [vblack] {} -- (0,0,1) node [vblack] {} -- (0,0,0);
				\flipcrossings{5,6}
			\end{knot}
			\begin{scope}[xshift=5cm]
				\begin{knot}[consider self intersections=true, clip width=3, clip radius=3pt, end tolerance=1pt]
					\strand[thick, red, only when rendering/.style={dashed}](2.5,0,0)
					.. controls  (1.75,0,0.25) and  (1.75,0,0.75) .. 
					(2.5,0,1);
					\strand[thick, red, only when rendering/.style={dashed}](2.5,1,0)
					.. controls  (1.65,1.5,0.5) and  (1.65,1.75,0.75) .. 
					(2.5,2,1);
					\strand[very thick, blue] (2.5,0,0) node [left] {\color{black} $a$} -- (3,0,0) node [vblue] {} -- (3.5,0,0) node [right] {\color{black} $a$};
					\strand[very thick, blue] (3.5,1,0) node [right] {\color{black} $c$} -- (3,1,0) node [vblue] {} -- (2.5,1,0) node [left] {\color{black} $c$};
					\strand[very thick, blue] (2.5,2,1) node [left] {\color{black} $f$} -- (3,2,1) node [vblue] {} -- (3,2,0) node [vblue] {} -- (3.5,2,0) node [right] {\color{black} $e$};
					\strand[very thick, blue] (3.5,1,1) node [right] {\color{black} $d$} -- (3,1,1) node [vblue] {} -- (3,0,1) node [vblue] {} -- (2.5,0,1) node [left] {\color{black} $b$};
					\flipcrossings{1}
				\end{knot}
			\end{scope}
		\end{tikzpicture}
		\caption{}
		\label{fig:1pattern_pairings}
	\end{subfigure}
	\caption{\textbf{(a)} The six vertices in a hinge of the $2\times1$ tube. \textbf{(b)} A polygon $P$ in $\tube^*$ and the 1-block $\beta$ (in blue) occurring between $x=3\pm\frac12$. Here $\mathcal{E}_\beta=\{a,b,c,f\}$ and $\mathcal{E}'_\beta=\{a,c,d,e\}$. The pairing $\rho$ on $\mathcal{E}_\beta$ induced by $P_{\rm{left}}$ is $\{\{a,b\},\{c,f\}\}$ (indicated by the dashed red lines), and this induces the pairing $\rho'$=$\{\{a,d\},\{c,e\}\}$ on $\mathcal{E}'_\beta$. The pair $(\beta,\rho$) forms a 1-pattern.}
	
\end{figure}

\subsection{Transfer matrices for polygons in \texorpdfstring{$\tube$}{T}}\label{ssec:TMs}

We begin by introducing 1-patterns in $\tube$. The definitions presented here are the same as those appearing in~\cite{BES19,BEISS18}.

Recall from Section~\ref{sec:background} that a 1-block is any (nonempty) collection of vertices, edges and half-edges that can comprise the part of a polygon in $\tube$ between planes $x=k\pm\frac12$ for some $k\in\mathbb{Z}$. For a 1-block $\beta$, let $\mathcal{E}_\beta=\{e_1,\dots,e_{2m}\}$ be the set of half-edges on the left (it may also be the case that $m=0$). Similarly let $\mathcal{E}'_\beta$ be the set of half-edges on the right. If $m>0$ and $P$ is a polygon in $\tube$ containing an occurrence of $\beta$, then the part of $P$ to the left of $\beta$ ($P_{\rm{left}}$) induces a partition of $\mathcal{E}_\beta$ into pairs (for any $e_i$, follow the edges of $P$ on the left of $\beta$ from $e_i$ until eventually arriving back at some $e_j$; then $e_i$ and $e_j$ get paired). See Figure~\ref{fig:1pattern_pairings}. Then for a given 1-block $\beta$ we  define $\Pi(\mathcal{E}_\beta)$ to be the set of all pair-partitions of $\mathcal{E}_\beta$ which are induced by some polygon containing $\beta$ in $\tube$. If $m=0$ then $\Pi(\mathcal{E}_\beta) = \{\emptyset\}$.

We define a \emph{1-pattern} $\pi$ to be a pair $(\beta,\rho)$ where $\beta$ is a 1-block and $\rho\in\Pi(\mathcal{E}_\beta)$. Note that a non-empty $\rho$  induces a pairing  on $\mathcal{E}'_\beta$ -- call this pairing $\rho'$. (If $\rho=\emptyset$, then the edges in $\beta$'s hinge induce the pairing $\rho'$.) See Figure~\ref{fig:1pattern_pairings}. If $\rho$ and $\rho'$ are both nonempty then we say $\pi$ is a \emph{proper 1-pattern}; if only $\rho$ is empty then $\pi$ is a \emph{starting 1-pattern}; and if only $\rho'$ is empty then $\pi$ is an \emph{ending 1-pattern}. We denote the sets of starting, proper and ending 1-patterns in $\tube$ by $\mathcal{A}_\mathrm{S},\mathcal{A}_\mathrm{P}$ and $\mathcal{A}_\mathrm{E}$ respectively. We also define ${\cal A}_0$ to be the set of all polygons in $\tube$ with span $s=0$; namely the 1-patterns for which both $\rho$ and $\rho'$ are empty. Note that the \emph{connected sum} patterns defined in Section~\ref{sec:background} with span 1 are indeed a subset of the 1-patterns defined here -- because connected sum patterns start and end with 2-sections, there is only one possible pairing on the leftmost (and rightmost) half-edges.

Given two 1-patterns $\pi_1 = (\beta_1,\rho_1)$ and $\pi_2 = (\beta_2,\rho_2)$, we say $\pi_2$ can \emph{follow} $\pi_1$ if $\rho'_1 = \rho_2$.

With this definition of patterns, we can follow the approaches used in \cite{Eng2014Thesis}  to obtain
transfer matrices.     We assign a labelling to the elements of ${\cal A}_\mathrm{P}$ and denote them as $\pi_1,\pi_2, \ldots, \pi_{N}$.   Then we obtain the $N\times N$ transfer matrix $T(x)$ for proper 1-patterns as follows:
\begin{equation}
	\left[T(x)\right]_{i,j} = \begin{cases} x^{n_{\pi_j}} & \text{if $\pi_j$ can follow $\pi_i$} \\ 0 & \text{otherwise,}\end{cases}
\end{equation}
where $n_{\pi}$ is the length of the 1-block from which the 1-pattern $\pi$ was derived.  We also need two other matrices.
We assign a labelling to the elements of ${\cal A}_\mathrm{S}$ and denote them as $\alpha_1,\alpha_2, \ldots, \alpha_{N_\mathrm{S}}$.   Then we obtain the $N_\mathrm{S}\times N$ start matrix $A(x)$ for starting 1-patterns as follows:
\begin{equation}
	\left[A(x)\right]_{i,j} = \begin{cases} x^{n_{\alpha_i} + n_{\pi_j}} & \text{if $\pi_j\in {\cal A}_\mathrm{P}$ can follow $\alpha_i\in{\cal A}_\mathrm{S}$} \\ 0 & \text{otherwise}\end{cases} .
\end{equation}
We also assign a labelling to the elements of ${\cal A}_\mathrm{E}$ and denote them as $\gamma_1,\gamma_2, \ldots, \gamma_{N_\mathrm{E}}$.   Then we obtain the $N\times N_\mathrm{E}$ end matrix $B(x)$ for ending 1-patterns as follows:
\begin{equation}
	\left[B(x)\right]_{i,j} = \begin{cases} x^{n_{\gamma_j}} & \text{if $\gamma_j\in {\cal A}_\mathrm{E}$ can follow $\pi_i\in{\cal A}_\mathrm{P}$} \\ 0 & \text{otherwise}\end{cases} .
\end{equation}

Define  $p_{\tube,n,s}$ to be the number of polygons counted in $p_{\tube,n}$ that have span $s$. The generating function for polygon counts can be expressed in terms of these  matrices as follows: 
\begin{align}
	G(x)=\sum_{n\geq 4} p_{\tube,n} x^{n} &=\sum_{s=0}^1\sum_{n\geq 4}  p_{\tube,n,s} x^{n}+\sum_{i,j}\left[\sum_{t\geq 0} A(x) \left(T(x)\right)^tB(x)\right]_{i,j} \notag\\
	&=\sum_{s=0}^1\sum_{n\geq 4}  p_{\tube,n,s} x^{n}+\sum_{i,j}\left[  A(x)(I-T(x))^{-1}B(x)\right]_{i,j}. \label{eqn:generating_function_matrices}
\end{align}
The radius of convergence of $G(x)$ is given by $\mu_{\tube}^{-1}=e^{-\kappa_{\tube}}$,  and since  for any $\tube_{M_1,M_2}$ it is known that $T(x)$ is aperiodic and irreducible \cite{Sot98}, the radius of convergence can also be determined by the smallest value of $x>0$ which satisfies $\det(I-T(x))=0$.  For the case that $\tube=\tube^*$, $T(x)$ has been determined (it has dimensions $1829\times1829$) and from that $\log\mu_{\tube^*}=\kappa_{\tube^*}=0.82694822250681$, with numerical error expected to be confined to the last digit.
Note that $\mu_{\tube^*}=e^{\kappa_{\tube^*}}$ must be an algebraic number, but we have not attempted to compute its minimal polynomial.

\subsection{Polygons with no 2-sections}\label{ssec:no_2_sections}

We next turn our attention to polygons in $\tube$  which have no 2-sections. For these polygons, our goal is to establish a relationship between their exponential growth rate and the radius of convergence of their generating function.
We accomplish this here by defining an appropriate transfer matrix and establishing that it is aperiodic and irreducible.   Note that we are assuming the dimensions of $\tube$ are large enough so that a polygon in $\tube$ can contain a 4-section. 

Define  $p_{\tube,n,s}(0)$ to be the number of polygons counted in $p_{\tube,n,s}$ that have no 2-sections and $p_{\tube,n,s}(K,0)$ to be the number of polygons counted in $p_{\tube,n,s}(0)$ that have knot type $K$. Also define $p_{\tube,n}(0)=\sum_s p_{\tube,n,s}(0)$  and $p_{\tube,n}(K,0)=\sum_s p_{\tube,n,s}(K,0)$. We can obtain a transfer matrix associated with $p_{\tube,n}(0)$ by removing all  1-patterns which cannot occur in a polygon without 2-sections from the transfer matrices above.  Specifically, 
we obtain new matrices $\hat{A}(x)$, $\hat{T}(x)$, and $\hat{B}(x)$ respectively from $A(x), T(x), B(x)$ by deleting all rows and columns associated with 1-patterns that cannot occur in a polygon without 2-sections. Analogously to~\eqref{eqn:generating_function_matrices}, we have that 
\begin{align}
	\hat{G}(x)&=\sum_{n\geq 4}  p_{\tube,n}(0) x^{n}= \sum_{n\geq 4} \sum_{s\geq 0} p_{\tube,n,s}(0) x^{n} \\ 
	&=\sum_{s=0}^1\sum_{n\geq 4}  p_{\tube,n,s}(0) x^{n}+\sum_{i,j}\left[\hat{A}(x)(I-\hat{T}(x))^{-1}\hat{B}(x)\right]_{i,j}.
\end{align}
We denote the set of proper 1-patterns that remain by $\hat{\mathcal{A}}_\mathrm{P}$ and note that it is not empty since there are proper 1-patterns consisting of exactly 4 edges in the $x$-direction which can occur in a polygon with no 2-sections.   $\hat{\mathcal{A}}_\mathrm{P}$ is necessarily a subset of the set of 1-patterns of $\mathcal{A}_\mathrm{P}$ that contain no 2-sections; we show in Section~\ref{ssec:eliminating_2secs} via Lemma~\ref{lem:split_at_first_2string} that in fact these two sets of 1-patterns are the same.  

The radius of convergence of $\hat{G}(x)$ is given by 
$\hat{\mu}_{\tube}^{-1}=e^{-\hat{\kappa}_\tube}$ where 
\begin{equation}
	\label{mu_no_2section}
	\hat{\mu}_{\tube}= \limsup_{n\to\infty}  ~(p_{\tube,n}(0))^{1/n} ,
\end{equation}
with the limit  taken through even values of $n$.
If the limit superior can be replaced by a limit then $\hat{\mu}_{\tube}$ is the exponential growth constant for polygons in $\tube$ which have no 2-sections. In fact this is the case for $\tube^*$, as follows from Lemma~\ref{lem:concat_no_2strings} below.

The matrix $\hat{T}(x)$ is much smaller than $T(x)$; indeed for $\tube^*$, $\hat{T}(x)$ has dimensions $553\times553$. $\hat{T}(x)$ is aperiodic, since there are many 1-patterns which can occur consecutively in a polygon without 2-sections (e.g. any 1-pattern in $\hat{\mathcal{A}}_\mathrm{P}$ consisting of exactly 4 edges in the $x$-direction). 
Further, if the matrix $\hat{T}(x)$ is irreducible, then it will follow that the radius of convergence of $\hat{G}(x)$, $e^{-\hat{\kappa}_\tube}$, is the smallest value of $x>0$ satisfying $\det(I-\hat{T}(x)) = 0$. 
Thus it remains to establish that $\hat{T}(x)$ is irreducible to achieve the main goals of this section.

$\hat{T}(x)$ is irreducible if, given any two 1-patterns $i$ and $j$ in $\hat{\mathcal{A}}_\mathrm{P}$, there exists an integer $k\geq 1$ such that $\left(\hat{T}(x)\right)^k_{ij} > 0$, or equivalently there is a sequence of 1-patterns, starting with $i$ and ending in $j$, which can occur consecutively in a polygon without 2-sections in $\tube$. If $\hat{T}(x)$ is explicitly known then this can be established by finding a power of $\hat{T}(x)$ for which all the entries are non-zero for $x>0$.  Otherwise a standard approach is to use a ``concatenation'' argument.  Here, even though we have $\hat{T}(x)$ explicitly for $\tube^*$, we use the latter approach to prove irreducibility, since the same concatenation argument  establishes that the $\limsup$ in~\eqref{mu_no_2section} is a limit  and it is also useful for the arguments in the next sections.

For the concatenation argument, we need to establish that it is possible to join (i.e.\ concatenate) any two polygons without 2-sections to create a new polygon, also without 2-sections, in such a way that minimal changes are made to the original two polygons.  In particular, if the concatenation leaves unchanged any proper 1-patterns that occur in the polygons then irreducibility follows.  This is because if $i$ and $j$ are two 1-patterns in $\hat{\mathcal{A}}_\mathrm{P}$, then by definition they can both occur in polygons without 2-sections. Let $P_i$ and $P_j$ be such polygons containing $i$ and $j$ respectively. Then $P_i$ and $P_j$ can be concatenated to give a new polygon without 2-sections that contains both $i$ and $j$.  This will give the needed sequence of consecutive 1-patterns from $i$ to $j$ to establish that $\left(\hat{T}(x)\right)^k_{ij} > 0$ for some $k$.  Thus it remains to show that there is a way to concatenate two such polygons.  Because we are also interested in controlling knot-types, we  want the concatenation process to be equivalent to a connected sum.   The required concatenation restrictions are given in Lemma~\ref{lem:concat_no_2strings} below and we prove this lemma using two others, Lemmas~\ref{lem:stretch_to_right} and~\ref{lem:stretch_to_left}.  The lemmas are proved only for $\tube^*$, however, the arguments can be extended in a straightforward manner to larger tubes. See \cite{Atapour10, atapourphdthesis} for a similar concatenation argument for a model of 2 component links, called 2-SAPs,  in $\tube$.

\begin{lem}\label{lem:concat_no_2strings}
	Let $P_1$ and $P_2$ be two polygons with no 2-sections in $\tube^*$, of lengths $n_1$ and $n_2$ and spans $s_1$ and $s_2$ respectively. Then there exists a polygon $P^*$ of length $n_1+n_2+2C+2$ (with $C$ being the same as in Lemmas~\ref{lem:stretch_to_right} and~\ref{lem:stretch_to_left}) and span $s_1+s_2+9$ such that
	\begin{itemize}
		\item the first (starting from the left) $s_1$ sections and $s_1$ hinges of $P_1$ and $P^*$ are the same;
		\item the last  $s_2$ sections and $s_2$ hinges of $P_2$ and $P^*$ are the same;
		\item $P^*$ contains no 2-sections; and
		\item $K(P^*) = K(P_1) \# K(P_2)$, where $K(P)$ denotes the knot-type of polygon $P$.
	\end{itemize}
\end{lem}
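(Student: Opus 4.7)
The plan is to build $P^*$ in three stages: canonical end-shaping of $P_1$, canonical start-shaping of $P_2$, and a short bridge joining them via a topological connected sum, all carried out inside the tight geometry of $\tube^*$ and without ever producing a 2-section.

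First, I would apply Lemma~\ref{lem:stretch_to_right} to $P_1$ to obtain a polygon $P_1'$ that agrees with $P_1$ on its first $s_1$ hinges and sections, has $n_1+C$ edges, and whose rightmost portion has been replaced by a fixed canonical configuration exposing two designated horizontal edges near the right end. Symmetrically I would apply Lemma~\ref{lem:stretch_to_left} to $P_2$ and position the result so that its leftmost hinge lies at a prescribed distance from the rightmost hinge of $P_1'$, chosen so that the combined contribution to the span of the two stretches plus the forthcoming bridge is exactly $9$. By the hypotheses of the stretch lemmas, each of these operations preserves knot type and introduces no 2-section, and each leaves the original $s_1$ (resp.\ $s_2$) hinges and sections unchanged.

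Next, I would fill the remaining gap with an explicit bridge block. The bridge template severs the two designated edges (one in $P_1'$, one in $P_2'$) and reroutes the four resulting loose endpoints across the gap by two new horizontal arcs, matched so that exactly one strand of $P_1'$ is joined to one strand of $P_2'$ on each side. This is the standard local move realising a connected sum of two closed curves along a trivial arc pair, so $K(P^*) = K(P_1')\,\#\,K(P_2') = K(P_1)\,\#\,K(P_2)$. Counting edges, the two stretches contribute $2C$, while the cut-and-reroute removes two edges and adds four (two new arcs of total span $2$), giving the claimed length $n_1+n_2+2C+2$. Each section of the bridge carries four strands by construction, so combined with the 2-section-free hypothesis on $P_1,P_2$ and the no-new-2-sections property of the stretch lemmas, $P^*$ has no 2-section anywhere.

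The main obstacle is the joint design of the canonical end configurations and the bridge template: the two stretch lemmas must be tuned so that a single bridge template suffices for every possible ending, and that template must fit inside the $2\times 1$ cross-section without forcing any 1-block to contain only two edges. This is precisely why the stretch lemmas are isolated and proved separately beforehand, why the canonical endings are chosen to expose horizontal edges rather than turning strands, and why the span budget of $9$ (rather than something smaller) is required. Given Lemmas~\ref{lem:stretch_to_right} and~\ref{lem:stretch_to_left} in hand, the bookkeeping above is a straightforward verification, most cleanly confirmed by an explicit picture of the three composite pieces.
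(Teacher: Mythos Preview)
Your approach is the same as the paper's: stretch $P_1$ to the right, stretch $P_2$ to the left, then splice them together by a local connected-sum move. The paper's proof is in fact a three-line verification once the stretch lemmas are in hand: shift $P_2''$ so its leftmost hinge is one unit to the right of the rightmost hinge of $P_1'$, delete the $ab$ edge from each of those two hinges, add the two horizontal edges $aa$ and $bb$ between them, and finally apply a single BFACF $+2$ move in the $+x$ direction to the $df$ edge in what was the rightmost hinge of $P_1'$.

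There is one place where your description does not quite hold together. You say the bridge ``removes two edges and adds four (two new arcs of total span $2$)'' and that ``each section of the bridge carries four strands by construction.'' If the gap really had span $2$ and only the two reconnecting arcs crossed it, each of the two new sections would contain exactly two edges and would therefore be a 2-section, contradicting your claim. The paper handles this by using a gap of span $1$ (so the two reconnecting arcs contribute only two edges) and then spending the remaining $+2$ on a BFACF move that pushes the leftover hinge edge $df$ of $P_1'$ into that same new section, giving it four edges. This is exactly the ``joint design'' issue you flag in your final paragraph; the paper resolves it with this one extra move rather than by widening the gap.
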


To prove Lemma~\ref{lem:concat_no_2strings}, two simple technical lemmas will help. The strategy is to add edges to one end of a polygon so that the polygon either starts (Lemma~\ref{lem:stretch_to_left}) or ends (Lemma~\ref{lem:stretch_to_right}) in a specific way in order to make the concatenation easy. 

\begin{lem}\label{lem:stretch_to_right}
	Let $P$ be an $n$-edge polygon with span $s\geq0$ and no 2-sections in $\tube^*$. Then there exists a constant $C$ and a polygon $P'$ of length $n+C$ such that
	\begin{itemize}
		\item the first (starting from the left-most hinge) $s$ sections and $s$ hinges of $P$ and $P'$ are the same;
		\item $P'$ contains no 2-sections;
		\item $P$ and $P'$ have the same knot type; and
		\item the rightmost hinge of $P'$ contains edges $ab$ and $df$ (recall Figure~\ref{fig:abcdef}), with no other edges occupied.
	\end{itemize}
\end{lem}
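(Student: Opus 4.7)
The plan is to exhibit a uniform construction that modifies $P$ only in a bounded region attached to its right end. Since $\tube^*$ has only 6 vertices per hinge, the rightmost section of $P$ has either 4 or 6 edges (it cannot be a 2-section), and, together with the pairing of its endpoints induced by the rightmost hinge of $P$, there are only finitely many ``boundary types'' that can appear. For each such boundary type, I will construct an explicit \emph{terminator gadget} of span at most some constant $s_0$ that, when spliced onto $P$ in place of its rightmost hinge, yields a polygon $P'$ satisfying the four conclusions. Taking $C$ to be the maximum number of edges introduced over all gadgets gives the required uniform constant.

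Each gadget will be constructed in two parts. A \emph{normalization block} will route the incoming strings rightward through a sequence of short extensions and rearrange them so that, after a bounded number of steps, the rightmost section is a fixed ``standard'' one --- say a 4-section at positions $\{a,b,d,f\}$ with pairing $\{a,b\},\{d,f\}$. A \emph{closure block} will then convert this standard 4-section into a rightmost hinge containing only the edges $ab$ and $df$. At every intermediate section the number of edges will be kept at 4 or 6; this is always possible because the two intermediate vertices $c$ and $e$ in each hinge can serve as detour vertices to maintain the required connectivity without forcing a 2-section. Knot-type invariance will follow because each short extension replaces arcs in the rightmost hinge of $P$ by isotopic arcs supported in an unknotted tubular region of $\mathbb{R}^3$, giving an ambient isotopy of $P$ onto $P'$.

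The main obstacle will be the explicit case analysis in the normalization block: one must verify that every boundary type can be brought to the standard one without ever producing a 2-section along the way. The most delicate cases are boundary types whose pairing uses paths through $c$ or $e$ (for example, a 4-section at $\{a,b,d,f\}$ paired as $\{a,f\},\{b,d\}$ via the in-hinge path $a$--$c$--$e$--$f$), but each such case will admit an explicit bounded-span gadget. Analogous exhaustive constructions have been carried out for the related 2-SAP model in lattice tubes~\cite{Atapour10, atapourphdthesis}, and the same strategy will apply here.
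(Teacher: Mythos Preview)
Your approach is correct in outline: the boundary types are finite, an explicit gadget can be built for each, and the isotopy argument for knot-type preservation is sound. But the paper's proof is much shorter because it exploits a structural fact you do not use. Since the last section of $P$ is not a 2-section, the rightmost hinge of $P$ must already contain at least two edges $e_1,e_2$ that do not share a vertex. The paper then simply applies a fixed sequence of ten BFACF $+2$ moves to $e_1$ and $e_2$, pushing them rightward (and sliding them within the new hinges as needed) until they land at positions $ab$ and $df$ in a new rightmost hinge at $x=s+4$. This gives $C=20$ exactly, for every $P$. Knot-type preservation is immediate because BFACF moves preserve knot type, and no 2-section is ever created because each new section carries the four edges coming from the two pushed-out arcs.

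By working with two \emph{hinge} edges rather than with the section strings and their induced pairing, the paper sidesteps the entire normalization-block case analysis: there is no need to distinguish 4-sections from 6-sections, no need to enumerate the realizable pairings, and no need to build string-permuting gadgets for the ``delicate'' cases you flag (such as the $\{a,f\},\{b,d\}$ pairing at $\{a,b,d,f\}$). Your route would work, but it buys nothing extra here and costs a substantially longer argument.
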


\begin{proof}
	The rightmost hinge of $P$ must necessarily have at least two edges which do not share a vertex (even if $P$ has span 0)
	since the section of the polygon immediately to the left of this hinge is not a 2-section.
	Choose two such edges and call them $e_1$ and $e_2$. Then there must be a sequence of $C/2=10$ BFACF $+2$ moves (see Figure~\ref{BFACF})  which ``bump out'' $e_1$ and $e_2$ to the right, moving them around if necessary, until $e_1$ has been ``stretched'' to $ab$ and $e_2$ has been ``stretched'' to $df$. The resulting polygon $P$ will necessarily have the same knot type as $P$, and will not contain any 2-sections.  $P'$ will have length $n+20$ and span $s+4$.   See Figure~\ref{fig:metacase2} for two of the relevant cases; the top picture shows the best case scenario where $e_1$ and $e_2$ are already $ab$ and $df$ while the bottom picture shows a worst case scenario, that is one requiring the full additional span in order to end at edges $ab$ and $df$ alone in the rightmost hinge.  
\end{proof}

\begin{figure}
	\centering
	\begin{subfigure}{\textwidth}
		\centering
		\resizebox{!}{10ex}{
			\begin{tikzpicture}[scale=0.8]
				\begin{knot}[consider self intersections=true, clip width=3, clip radius=3pt, end tolerance=1pt]
					\strand[very thick] (0,1,1) node [vblack] {} -- (0,2,1) node [vblack] {};
					\strand[very thick, blue] (0,0,0) node [vblue] {} -- (0,0,1) node [vblue] {};
					\draw (0,2,0) node [vgrey] {};
					\draw (0,1,0) node [vgrey] {};
				\end{knot}
				
				\draw[line width=3pt, ->] (2.5,0,0.75) -- (3.5,0,0.75);
				
				\tikzset{xshift=4.5cm}
				\begin{knot}[consider self intersections=true, clip width=3, clip radius=3pt, end tolerance=1pt]
					\strand[very thick] (0,1,1) node [vblack] {} -- (1,1,1) node [vblack] {}  -- (2,1,1) node [vblack] {} -- (3,1,1) node [vblack] {} -- (4,1,1) node [vblack] {}
					-- (4,2,1) node [vblack] {} -- (3,2,1) node [vblack] {} -- (2,2,1) node [vblack] {} -- (2,2,0) node [vblack] {} -- (1,2,0) node [vblack] {} -- (1,2,1) node [vblack] {} -- (0,2,1) node [vblack] {};
					\strand[very thick, red] (0,0,0) node [vred] {} -- (1,0,0) node [vred] {} -- (1,1,0) node [vred] {} -- (2,1,0) node [vred] {} -- (2,0,0) node [vred] {} -- (3,0,0) node [vred] {} -- (4,0,0) node [vred] {} 
					-- (4,0,1) node [vred] {} -- (3,0,1) node [vred] {} -- (2,0,1) node [vred] {} -- (1,0,1) node [vred] {} -- (0,0,1) node [vred] {};
					\draw (0,2,0) node [vgrey] {};
					\draw (0,1,0) node [vgrey] {};
					\foreach \x in {3,...,4} {
						\draw (\x,1,0) node [vempty] {};
						\draw (\x,2,0) node [vempty] {};
					}
					\flipcrossings{3,4}
				\end{knot}
			\end{tikzpicture}
		}
		\label{fig:meta_2v}
	\end{subfigure}
	
	\vspace{3ex}
	
	\begin{subfigure}{\textwidth}
		\centering
		\resizebox{!}{10ex}{
			\begin{tikzpicture}[scale=0.8]
				\begin{knot}[consider self intersections=true, clip width=3, clip radius=3pt, end tolerance=1pt]
					\strand[very thick] (0,2,0) node [vblack] {} -- (0,2,1) node [vblack] {};
					\strand[very thick, blue] (0,1,1) node [vblue] {} -- (0,0,1) node [vblue] {};
					\draw (0,0,0) node [vgrey] {};
					\draw (0,1,0) node [vgrey] {};
				\end{knot}
				
				\draw[line width=3pt, ->] (2.5,0,0.75) -- (3.5,0,0.75);
				
				\tikzset{xshift=4.5cm}
				\begin{knot}[consider self intersections=true, clip width=3, clip radius=3pt, end tolerance=1pt]
					\strand[very thick] (0,2,0) node [vblack] {} -- (1,2,0) node [vblack] {} -- (1,1,0) node [vblack] {} -- (2,1,0) node [vblack] {} -- (3,1,0) node [vblack] {} -- (3,1,1) node [vblack] {} -- (4,1,1) node [vblack] {} -- (4,2,1);
					\strand[very thick] (0,2,1) node [vblack] {} -- (1,2,1) node [vblack] {} -- (2,2,1) node [vblack] {} -- (3,2,1) node [vblack] {} -- (4,2,1) node [vblack] {};
					\strand[very thick, red] (0,0,1) node [vred] {} -- (1,0,1) node [vred] {} -- (1,0,0) node [vred] {} -- (2,0,0) node [vred] {} -- (3,0,0) node [vred] {} -- (4,0,0) node [vred] {} -- (4,0,1);
					\strand[very thick, red] (0,1,1) node [vred] {} -- (1,1,1) node [vred] {} -- (2,1,1) node [vred] {} -- (2,0,1) node [vred] {} -- (3,0,1) node [vred] {} -- (4,0,1) node [vred] {};
					\draw (0,1,0) node [vgrey] {};
					\draw (0,0,0) node [vgrey] {};
					\foreach \x in {1,...,4} \draw (\x,2,0) node [vempty] {};
					\draw (4,1,0) node [vempty] {};
					\flipcrossings{1,2}
				\end{knot}
			\end{tikzpicture}
		}
		\label{fig:meta_2vi}
	\end{subfigure}
	\caption{Bumping out edges.}
	\label{fig:metacase2}
\end{figure}

By symmetry we immediately have the following.
\begin{lem}\label{lem:stretch_to_left}
	Let $P$ be a polygon with no 2-sections in $\tube^*$, of length $n$ and span $s\geq0$. Then there exists a constant $C$ (the same constant as in Lemma~\ref{lem:stretch_to_right}) and a polygon $P''$ of length $n+C$ such that
	\begin{itemize}
		\item the rightmost $s$ sections and $s$ hinges of $P$ and $P''$ are the same;
		\item $P''$ contains no 2-sections;
		\item $P$ and $P''$ have the same knot type; and
		\item the leftmost hinge of $P''$ contains edges $ab$ and $ce$, with no other edges occupied.
	\end{itemize}
\end{lem}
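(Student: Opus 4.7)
The plan is to derive Lemma~\ref{lem:stretch_to_left} from Lemma~\ref{lem:stretch_to_right} by exploiting a symmetry of the tube $\tube^*$. Specifically, I would use the involution $\Phi$ of $\tube^*$ obtained by composing the reflection $z\mapsto 1-z$ with the ``flip'' $x\mapsto s_0-x$ for an appropriate integer $s_0$ chosen so that the image lies in $\tube^*$ and has a vertex in the plane $x=0$. Under this involution, the labeled vertices within each hinge are permuted as $a\leftrightarrow b$, $c\leftrightarrow d$, $e\leftrightarrow f$. In particular, the edge $ab$ maps to itself (with its endpoints swapped), while the edge $df$ maps to the edge $ce$. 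Moreover, $\Phi$ is an isometry of $\mathbb{R}^3$ that sends sections to sections, swaps the leftmost and rightmost hinges of any embedding, preserves the number of edges, preserves the set of $2$-sections, and preserves the knot type of any polygon.

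Given $P$ as in the hypotheses, I would set $P^r := \Phi(P)$, which is a polygon in $\tube^*$ with $n$ edges, span $s$, no $2$-sections, and knot type $K(P^r)=K(P)$. Applying Lemma~\ref{lem:stretch_to_right} to $P^r$ then yields a polygon $P^{r\prime}$ of length $n+C$ with no $2$-sections, having the same knot type as $P^r$, whose first $s$ sections and hinges agree with those of $P^r$, and whose rightmost hinge contains exactly the edges $ab$ and $df$.

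Finally, I would set $P'' := \Phi(P^{r\prime})$ (translated to start at $x=0$ if necessary). Because $\Phi$ is an involution that swaps left and right ends, the last $s$ sections and hinges of $P''$ agree with the first $s$ sections and hinges of $P^r=\Phi(P)$, which in turn correspond, under $\Phi$, to the last $s$ sections and hinges of $P$. The rightmost hinge of $P^{r\prime}$ becomes the leftmost hinge of $P''$, and under the vertex permutation induced by $\Phi$ the edge set $\{ab,df\}$ transports to $\{ab,ce\}$, providing the prescribed end configuration. Length, knot type, and absence of $2$-sections are all preserved by $\Phi$, so all four conclusions of the lemma hold. No real obstacle is anticipated: the only bookkeeping required is to verify that the composite reflection $\Phi$ genuinely maps $\tube^*$ to itself (which follows since $\tube^*$ has $z\in\{0,1\}$ and the $x$-flip can be absorbed into a translation), and to track the action of $\Phi$ carefully on labeled vertices when transferring the conclusion about edges $\{ab,df\}$ back to $\{ab,ce\}$.
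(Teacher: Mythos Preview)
Your proposal is correct and matches the paper's approach: the paper simply states ``By symmetry we immediately have the following'' before Lemma~\ref{lem:stretch_to_left}, and your explicit construction of the involution $\Phi$ (the composition of $z\mapsto 1-z$ with an $x$-reflection) is precisely the symmetry being invoked. Your careful tracking of the vertex permutation $a\leftrightarrow b$, $c\leftrightarrow d$, $e\leftrightarrow f$ correctly carries $\{ab,df\}$ to $\{ab,ce\}$, and since $\Phi$ is the composition of two reflections it is orientation-preserving and hence genuinely preserves knot type.
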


The proof of Lemma~\ref{lem:concat_no_2strings} is now straightforward.
\begin{proof}[Proof of Lemma~\ref{lem:concat_no_2strings}]
	Let $P'_1$ and $P''_2$ be the polygons obtained by applying Lemma~\ref{lem:stretch_to_right} to $P_1$ and Lemma~\ref{lem:stretch_to_left} to $P_2$ respectively. Shift $P''_2$ so that its leftmost hinge is one unit to the right of $P'_1$'s rightmost hinge. Remove the edges $ab$ from each of those two hinges, and add the $aa$ and $bb$ edges between the two. Finally, apply a BFACF $+2$ move in the $+x$ direction to the $df$ edge in (what was) the rightmost hinge of $P'_1$.
\end{proof}
We remark at this point that, given $P^*$ and either $n_1$ or $s_1$ (and knowledge of the constant $C$), it is possible to unambiguously recover $P_1$ and $P_2$ from $P^*$.

Since $\hat{T}(x)$ is aperiodic and irreducible, it follows that the radius of convergence of $\hat{G}(x)$ is the smallest value of $x>0$ satisfying $\det(I-\hat{T}(x)) = 0$. 
For $\tube^*$, the matrix $\hat{T}(x)$ can be computed exactly, and from this it can be found that $\hat{\kappa}_{\tube^*} = 0.43623880228124$, with the numerical error expected to be confined to the last digit. 
Note that we do not need to prove the numerical accuracy of this estimate since in the next subsection we obtain a rigorous upper bound on $\hat{\kappa}_{\tube^*}$ (rather than just a numerical estimate) that is sufficient to prove Theorem~\ref{thm:density_of_2strings-nonHam}. Full details are given in the next section but first we note two more consequences of Lemma~\ref{lem:concat_no_2strings}.

From Lemma~\ref{lem:concat_no_2strings} we also have that:
\[p_{\tube^*,n_1}(0)p_{\tube^*,n_2}(0)\leq p_{\tube^*,n_1+n_2+42}(0).\]
Hence we have for $n_1$ and $n_2$ sufficiently large that:
\begin{equation}
	\log p_{\tube^*,n_1-42}(0)+\log p_{\tube^*,n_2-42}(0)\leq \log p_{\tube^*,n_1+n_2-42}(0),
\end{equation}
so that $\log p_{\tube^*,n-42}(0)$ is a superadditive sequence and it follows (see for example~\cite[\S1.2]{MadrasSlade}) that the $\limsup$ of~\eqref{mu_no_2section} is a limit with:
\begin{equation}
	\label{mu_no_2section_limit}
	\hat{\mu}_{\tube^*}= \lim_{n\to\infty}  ~(p_{\tube^*,n}(0))^{1/n} =\sup_{n\geq 0} ~(p_{\tube^*,n-42}(0))^{1/n}.
\end{equation}
Furthermore, because the concatenation corresponds to a connected sum and since the connected sum of two unknots yields an unknot, Lemma~\ref{lem:concat_no_2strings}  also implies the existence of the limit that defines the exponential growth constant, $\hat{\mu}_{\tube^*,0_1}$, for unknot polygons in $\tube$ without 2-sections.

\subsection{Rigorous numerical bounds}\label{ssec:rigorous_bounds}

The next task is to establish two rigorous bounds: an upper bound on $\hat{\kappa}_{\tube^*}$, the growth rate of polygons which have no 2-sections; and a lower bound on $\kappa_{\tube^*}(0_1)$, the growth rate of unknots. The first will be derived by analysis of the transfer matrix $\hat{T}(x)$; the second will follow from analysis of the first few terms of the unknot enumeration series.

Recall that $\hat{\kappa}_\tube = -\log\hat{x}_0$, where $\hat{x}_0$ is the smallest value of $x>0$ which satisfies $\det(I-\hat{T}(x)) = 0$. Note that, for real $x>0$, because $\hat{T}(x)$ is an irreducible, aperiodic matrix with non-negative entries, it has a unique dominant eigenvalue. This eigenvalue is real and positive, and is an increasing function of $x$. Therefore an equivalent definition of $\hat{x}_0$ is the unique positive $x$ such that $\hat{T}(x)$ has dominant eigenvalue 1. So we can find a lower bound for $\hat{x}_0$, and thus an upper bound for $\hat{\kappa}_\tube$, by finding any value of $x$ which yields a dominant eigenvalue (and thus spectral radius) smaller than~1.

The spectral radius $r_M$ of a matrix $M$ satisfies
\begin{equation}
	r_M \leq \lVert M^k\rVert^{1/k}
\end{equation}
for any $k\geq 1$, where $\lVert\cdot\rVert$ is any consistent matrix norm. We can, for example, use $\lVert \cdot \rVert_\infty$, which is the maximum absolute row sum (see for example~\cite{DerzkoPfeffer1965}).

In this case, when $x=0.64$, using $k=10$ establishes that the spectral radius is smaller than $0.99485$. So $\hat{x}_0>0.64$, and hence
\begin{equation}
	\hat{\kappa}_{\tube^*} < -\log0.64 = 0.446287,
\end{equation}
which establishes Lemma~\ref{lem:tube*_no2sections_upperbound}.

Next, we require a lower bound on $\kappa_{\tube^*}(0_1)$. Since any two polygons in $\tube^*$ can be concatenated with the addition of exactly 6 edges,  giving their topological connected sum, we have
\begin{equation}
	p_{\tube^*,m}(0_1)p_{\tube^*,n}(0_1) \leq p_{\tube^*,m+n+6}(0_1)
\end{equation}
or equivalently
\begin{equation}
	\log p_{\tube^*,m-6}(0_1) + \log p_{\tube^*,n-6}(0_1) \leq \log p_{\tube^*,m+n-6}(0_1).
\end{equation}
So $\log p_{\tube^*,n-6}(0_1)$ is a \emph{superadditive} sequence, and it follows that
\begin{equation}
	\kappa_{\tube^*}(0_1) = \lim_{n\to\infty} \frac1n \log p_{\tube^*,n-6}(0_1) = \sup_{n\geq 0}\frac1n \log p_{\tube^*,n-6}(0_1).
\end{equation}

For any $n\geq0$, $\frac1n \log p_{\tube^*,n-6}(0_1)$ is thus a lower bound on $\kappa_{\tube^*}(0_1)$. We have computed the first few terms in this sequence by exhaustively generating all polygons and checking their knot type. Various simplifications make this task somewhat less onerous than it may seem at first. The data is presented in Table~\ref{table:unknot_counts}. We see that $n=30$ gives $\kappa_{\tube^*}(0_1) \geq 0.620044$, establishing Lemma~\ref{lem:tube*_unknots_lowerbound}.

We thus have that 
\begin{equation}
	\hat \kappa_{\tube^*} (0_1) \leq \hat{\kappa}_{\tube^*} < -\log0.64 = 0.446287< 0.620044 \leq \kappa_{\tube^*}(0_1).
\end{equation}
Since unknot polygons without 2-sections are a subset of all polygons without 2-sections, this establishes that unknot polygons with no 2-sections are exponentially rare in the set of all unknot polygons in $\tube^*$.  So all but exponentially few unknot polygons in $\tube^*$ must contain  2-sections, but this is not sufficient, yet,  to prove Theorem~\ref{thm:density_of_2strings-nonHam}.  We must also establish that they contain a non-zero density of 2-sections.    We establish more lemmas in the next subsection which allow us to obtain this stronger result.

\begin{table}%
	\caption{The first few values in the sequences $p_{\tube^*,n}(0_1)$ and $\frac1n\log p_{\tube^*,n-6}(0_1)$ for $\tube^*$, the $2\times1$ tube.}
	\label{table:unknot_counts}
	\centering
	\begin{tabular}{l | l | l}
		$n$ & $p_{\tube^*,n}(0_1)$ & $\frac1n \log p_{\tube^*,n-6}(0_1)$ \\
		\hline
		4 & 9 & $-\infty$ \\
		6 & 42 & $-\infty$ \\
		8 & 209 & $-\infty$ \\
		10 & 1,113 & 0.219722 \\
		12 & 5,835 & 0.311472 \\
		14 & 30,561 & 0.381595 \\
		16 & 160,119 & 0.438426 \\
		18 & 838,043 & 0.481757 \\
		20 & 4,383,657 & 0.516374 \\
		22 & 22,917,673 & 0.544712 \\
		24 & 119,796,593 & 0.568284 \\
		26 & ? & 0.588207 \\
		28 & ? & 0.605265 \\
		30 & ? & 0.620044
	\end{tabular}
\end{table}

\subsection{Eliminating 2-sections}\label{ssec:eliminating_2secs}

Two more lemmas for this section relate polygons with 2-sections to those without. The first shows that a single 2-section can be removed with the addition of a fixed number of edges.
Note that these arguments are for $\tube^*$ but we expect that they can be modified in a straightforward manner to hold for larger $\tube$.

\begin{lem}\label{lem:split_at_first_2string}
	Let $P$ be a polygon of span $s$ with $t\geq1$ 2-sections. Say the leftmost 2-section is between $x=k$ and $x=k+1$, and that there are $n_1$ edges of $P$ in the half-space $x\leq k$ and $n_2$ edges in the half-space $x\geq k+1$. Then there exists a constant $D$ and polygons $P_1$ and $P_2$, of lengths $n_1+D$ and $n_2+D$ respectively, such that
	\begin{itemize}
		\item the leftmost $k$ sections and $k$ hinges of $P$ and $P_1$ are the same;
		\item the rightmost $s-k-1$ sections and $s-k-1$ hinges of $P$ and $P_2$ are the same;
		\item $P_1$ has no 2-sections;
		\item $P_2$ has $t-1$ 2-sections; and
		\item $K(P) = K(P_1) \# K(P_2)$.
	\end{itemize}
\end{lem}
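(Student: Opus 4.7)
The plan is to split $P$ at its leftmost $2$-section into two open walks and then close each of them off into a polygon by attaching a short ``cap'' that (i) adds only a bounded number of edges, (ii) lies entirely in a thin slab beyond the cut, and (iii) is designed never to create a new $2$-section.

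First I would set up the cut. The two edges of the leftmost $2$-section, call them $e_1,e_2$, join lattice points $(k,u_1)\leftrightarrow(k+1,u_1)$ and $(k,u_2)\leftrightarrow(k+1,u_2)$ for some $u_1,u_2\in\mathcal V$. Since $e_1,e_2$ are the \emph{only} edges of $P$ joining the two half-spaces $\{x\leq k\}$ and $\{x\geq k+1\}$, removing them breaks $P$ into exactly two disjoint self-avoiding walks: $A_1\subset\{x\leq k\}$ with endpoints $(k,u_1),(k,u_2)$ and $n_1$ edges, and $A_2\subset\{x\geq k+1\}$ with endpoints $(k+1,u_1),(k+1,u_2)$ and $n_2$ edges.

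Next I would construct $P_1$ by attaching to $A_1$ a self-avoiding arc $C_1$ that lies in $\{x\geq k\}$, meets $A_1$ only at the two endpoints $(k,u_1)$ and $(k,u_2)$, is disjoint from the leftmost $k$ hinges of $P$, and -- critically -- occupies at least $3$ edges in every section it spans. A concrete way to build such a $C_1$ is, for each of the finitely many unordered pairs $\{u_1,u_2\}\subset\mathcal V$ that can occur as a $2$-section in $\tube^*$, to exhibit once and for all an explicit routing: push the two endpoints out to $x=k+1$ along a pair of parallel half-edges, insert a $+x$ bumped-out U-shape in each intermediate section to force at least three edges per section (in the spirit of Lemmas~\ref{lem:stretch_to_right}--\ref{lem:stretch_to_left}), and then join the two strands inside the rightmost hinge at $x=k+c$ for a small $c$. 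Taking $D$ to be an upper bound on the edge counts of these finitely many templates gives a single universal constant. The polygon $P_2$ is obtained by the mirror-image construction, capping $A_2$ on its left.

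To verify the listed properties, the first two bullets hold by construction (we did not touch the $k$ leftmost sections/hinges of $P$ when building $P_1$, nor the $s-k-1$ rightmost ones of $P$ when building $P_2$). The sections of $P_1$ within $\{x\leq k\}$ are not $2$-sections because $k+\tfrac12$ was the leftmost $2$-section of $P$, and the sections inside the cap have $\geq 3$ edges by design, so $P_1$ has no $2$-sections. For $P_2$, the $t-1$ remaining $2$-sections of $P$ (which all lie strictly to the right of $x=k+1$) are preserved, and the left cap contributes only non-$2$-sections, so $P_2$ has exactly $t-1$ $2$-sections. Finally, the knot-type identity: the plane $x=k+\tfrac12$ meets $P$ in precisely the two arcs $e_1,e_2$, and together with the two cap arcs $C_1,C_2$ these lie on a $2$-sphere $\Sigma$ separating $\mathbb R^3$ into two balls, each containing a tangle-closure that is exactly $P_1$ and $P_2$ respectively; since the $C_i$ are unknotted bridges inside $\Sigma$, this is a Schubert-style decomposition and hence $K(P)=K(P_1)\# K(P_2)$.

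The only real obstacle is step two, the bounded case analysis producing the caps. Everything else is bookkeeping. In $\tube^*$ the number of configurations $\{u_1,u_2\}$ is very small and, together with a handful of admissible orientations of the two cut half-edges, gives only finitely many templates; for each template a short explicit picture analogous to the ``bumping out'' construction in Lemma~\ref{lem:stretch_to_right} produces a valid cap, and the maximum of their lengths is the desired $D$.
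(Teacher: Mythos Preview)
Your overall plan---cut at the leftmost 2-section, cap each half, verify the five bullets---is the same as the paper's. The gap is in your capping construction. You propose an arc $C_1\subset\{x\geq k\}$ that meets $A_1$ \emph{only} at the two endpoints $(k,u_1),(k,u_2)$ and whose template depends only on the pair $\{u_1,u_2\}$. This cannot work in general. If the section $S_k$ (immediately to the left of the cut) is a $6$-section, then all six vertices of the hinge $H_k$ are occupied by $A_1$; consequently the only vertices of $C_1$ at $x=k$ are $(k,u_1)$ and $(k,u_2)$, and hence $C_1$ contributes at most two $x$-direction edges to the section between $x=k$ and $x=k+1$. Since $A_1\subset\{x\leq k\}$ contributes none, the polygon $P_1=A_1\cup C_1$ has a $2$-section there, contradicting your third bullet. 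Bumping a $U$ into one of your two outgoing strands does not help: a $+2$ move perpendicular to an $x$-edge inserts a new vertex at $x=k$, which collides with $A_1$ in exactly this case.

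The paper avoids this by \emph{modifying} $A_1$ rather than merely appending a cap. It observes that at the hinge $x=k$ one of two things always holds: either (1) $A_1$ contains a hinge edge $\eta$ disjoint from $u,v$ (automatic when $k>0$, since $S_k$ is a $4$- or $6$-section), or (2) there is an adjacent unoccupied pair $p,q$ in $H_k$. In case~(1) one applies a BFACF $+2$ move in the $+x$ direction to $\eta$ itself, pushing part of $A_1$ into $x=k+1$ and guaranteeing at least four edges in that section; in case~(2) one routes the closing path through $pq$ and then bumps $pq$ back to $x=k$. Either way the construction depends on the configuration of $P$ at $H_k$, not just on $\{u_1,u_2\}$. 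Your argument is easily repaired along these lines, but the ``template depending only on the 2-section type'' claim should be dropped.
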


\begin{proof}
	Let $\mathcal C$ denote the set of six edges in an arbitrary section of $\tube^*$. 
	Say the 2-section in question contains edges $u,v\in\mathcal C$. We break $P$ at $x=k+1/2$, and first consider only the left piece, call it $P_{\rm{left}}$. Begin by extending the half-edges of $P_{\rm{left}}$ to full edges, i.e.\ make $u$ and $v$ full edges again which end in the hinge $x=k+1$. Then at least one of the following must be true:
	\begin{enumerate}
		\item[(1)] $P$ has an edge $\eta$ in the hinge $x=k$ which is incident on neither $u$ nor $v$; or
		\item[(2)] there are adjacent vertices $p,q \neq u,v$ in the hinge $x=k$, with $P$ containing neither $p$ nor $q$.
	\end{enumerate}
	Note that if $k>0$ then (1) must be true, since between $x=k-1$ and $x=k$ there is either a 4-section or a 6-section. If $k=0$ then this can easily be checked by hand (there are not very many cases to consider).
	See Figure~\ref{fig:removing_2string} for an example for case (1). 
	
	In case (1), perform a single BFACF $+2$-move in the $+x$ direction to $\eta$ (extending it to the hinge $x=k+1$). Now in the hinge $x=k+1$, if there is a simple path joining the endpoints of $u$ and $v$ that does not intersect $\eta$, add such a path to create the polygon $P_1'$.  Otherwise $\eta$ must necessarily be the edge $dc$ and only one of $u$, $v$ has its endpoint in $\{a,b\}$; without loss of generality suppose that $u$ has $a$ as its endpoint. Add 2 edges in the $+x$ direction to each of $u$ and $v$ (extending to the hinge $x=k+3$). Then use a sequence of 3 BFACF $+2$-moves to bump $\eta$ to the edge $bd$ in the hinge $x=k+3$ (avoiding the extended $u$ and $v$).  Now there will be a  simple path between $u$ and $v$ which avoids $bd$ in the hinge $x=k+3$, add one to create $P_1'$. 
	Thus the right-most plane of $P_1'$ is $x=k+j+1$ with the  value of $j$ required depending on $u,v$ and $\eta$; it may be 0 or 2 (we will return to this below).
	
	In case (2), join $u$ and $v$ by a simple path in the hinge $x=k+1$ which contains the edge $pq$. Then apply a BFACF $+2$ move in the $-x$ direction from that edge to create $P_1'$.
	
	The number of edges added in the above two cases will depend on $j$, as well as the specific values of $u,v$, etc. However, there will always be at least one 4-section (in the section between $x=k$ and $x=k+1$, for example), whose edges can be duplicated if necessary (increasing the number of edges added by 4), and where a BFACF $+2$-move can be performed (increasing the number of edges added by 2). It follows that there is a $D$ sufficiently large such that the above operations can be performed on \emph{any} polygon $P$, so that the piece to the left of the first 2-section is closed off with a total length of $n_1+D$ to create $P_1$ from $P_1'$.
	
	The mirror image of  these arguments allows us to close off the right piece with total length $n_2+D$ and create the required $P_2$.
\end{proof}

\begin{figure}%
	\centering
        \scalebox{0.9}{
	\begin{tikzpicture}
		\begin{knot}[consider self intersections=true, clip width=3, clip radius=3pt, end tolerance=1pt]
			\draw[very thick, red] (2,0,0) -- (3,0,0);
			\draw[very thick, red] (2,2,1) -- (3,2,1);
			\strand[very thick, black] (0,0,0) node [vblack] {} -- (1,0,0) node [vblack] {} -- (2,0,0) node [vblack] {};
			\strand[very thick, black] (3,0,0) node [vblack] {} -- (4,0,0) node [vblack] {} -- (5,0,0) node [vblack] {} -- (5,1,0) node [vblack] {} -- (4,1,0) node [vblack] {} -- (3,1,0) node [vblack] {} -- (3,2,0) node [vblack] {} -- (4,2,0) node [vblack] {} -- (4,2,1) node [vblack] {} -- (5,2,1) node [vblack] {} -- (5,1,1) node [vblack] {} -- (4,1,1) node [vblack] {} -- (4,0,1) node [vblack] {} -- (3,0,1) node [vblack] {} -- (3,1,1) node [vblack] {} -- (3,2,1) node [vblack] {};
			\strand[very thick, black] (2,2,1) node [vblack] {} -- (1,2,1) node [vblack] {} -- (0,2,1) node [vblack] {} -- (0,1,1) node [vblack] {} -- (0,1,0) node [vblack] {} -- (1,1,0) node [vblack] {} -- (2,1,0) node [vblack] {} -- (2,1,1) node [vblack] {} -- (1,1,1) node [vblack] {} -- (1,0,1) node [vblack] {} -- (0,0,1) node [vblack] {} -- (0,0,0);
			\flipcrossings{1,2}
		\end{knot}
		
		\begin{scope}[xshift=-3.5cm,yshift=-3cm]
			\begin{knot}[consider self intersections=true, clip width=3, clip radius=3pt, end tolerance=1pt]
				\strand[very thick, orange] (2,0,0) -- (3,0,0) node [vorange] {} -- (4,0,0) node [vorange] {} -- (5,0,0) node [vorange] {} -- (5,1,0) node [vorange] {} -- (5,2,0) node [vorange] {} -- (5,2,1) node [vorange] {} -- (4,2,1) node [vorange] {} -- (3,2,1) node [vorange] {} -- (2,2,1);
				\strand[very thick, blue] (2,1,0) -- (3,1,0) node [vblue] {} -- (4,1,0) node [vblue] {} -- (4,1,1) node [vblue] {} -- (5,1,1) node [vblue] {} -- (5,0,1) node [vblue] {} -- (4,0,1) node [vblue] {} -- (3,0,1) node [vblue] {} -- (3,1,1) node [vblue] {} -- (2,1,1);
				\strand[very thick, black] (2,0,0) -- (1,0,0) node [vblack] {} -- (0,0,0) node [vblack] {} -- (0,0,1) node [vblack] {} -- (1,0,1) node [vblack] {} -- (1,1,1) node [vblack] {} -- (2,1,1);
				\strand[very thick, black] (2,1,0) -- (1,1,0) node [vblack] {} -- (0,1,0) node [vblack] {} -- (0,1,1) node [vblack] {} -- (0,2,1) node [vblack] {} -- (1,2,1) node [vblack] {} -- (2,2,1);
				\flipcrossings{1}
			\end{knot}
			\draw (2,0,0) node [vblack] {};
			\draw (2,1,1) node [vblack] {};
			\draw (2,1,0) node [vblack] {};
			\draw (2,2,1) node [vblack] {};
		\end{scope}
		
		\begin{scope}[xshift=3.5cm,yshift=-3cm]
			\begin{knot}[consider self intersections=true, clip width=3, clip radius=3pt, end tolerance=1pt]
				\strand[very thick, orange] (3,0,0) -- (2,0,0) node [vorange] {} -- (1,0,0) node [vorange] {} -- (0,0,0) node [vorange] {} -- (0,0,1) node [vorange] {} -- (0,1,1) node [vorange] {} -- (0,2,1) node [vorange] {} -- (1,2,1) node [vorange] {} -- (1,1,1) node [vorange] {} -- (2,1,1) node [vorange] {} -- (2,2,1) node [vorange] {} -- (3,2,1);
				\strand[very thick, blue] (3,1,0) -- (2,1,0) node [vblue] {} -- (1,1,0) node [vblue] {} -- (0,1,0) node [vblue] {} -- (0,2,0) node [vblue] {} -- (1,2,0) node [vblue] {} -- (2,2,0) node [vblue] {} -- (3,2,0);
				\strand[very thick, black] (3,0,0) -- (4,0,0) node [vblack] {} -- (5,0,0) node [vblack] {} -- (5,1,0) node [vblack] {} -- (4,1,0) node [vblack] {} -- (3,1,0);
				\strand[very thick, black] (3,2,0) -- (4,2,0) node [vblack] {} -- (4,2,1) node [vblack] {} -- (5,2,1) node [vblack] {} -- (5,1,1) node [vblack] {} -- (4,1,1) node [vblack] {} -- (4,0,1) node [vblack] {} -- (3,0,1) node [vblack] {} -- (3,1,1) node [vblack] {} -- (3,2,1); 
				\flipcrossings{1}
			\end{knot}
			\draw (3,0,0) node [vblack] {};
			\draw (3,1,0) node [vblack] {};
			\draw (3,2,0) node [vblack] {};
			\draw (3,2,1) node [vblack] {};
		\end{scope}
		
		\draw[line width=2.5pt, black, ->] (1,-0.7,0) -- (1,-2.3,0);
		\draw[line width=2.5pt, black, ->] (5,-0.7,0) -- (6.1,-2.3,0);
		
	\end{tikzpicture}
        }
	\caption{An illustration of the construction used in Lemma~\ref{lem:split_at_first_2string}. Above is a polygon in $\tube^*$ with a 2-section between $x=2$ and $x=3$ (indicated in red). Below are the two polygons $P_1$ and $P_2$. In this case the endpoints of the edges $u$ and $v$ are in $\{a,f\}$, and both sides are in case (1) -- on the left $\eta=cd$ and on the right $\eta=ce$ (we could also have used $\eta=bd$). The number of edges added to each side is $D=17$ (note that an extra BFACF $+2$ move was used on $P_2$ to achieve this).}
	\label{fig:removing_2string}
\end{figure}

We remark at this point that, given $P_1$ and $P_2$ (and knowledge of the constant $D$), it may not be possible to unambiguously recover the original polygon $P$. However, there will be at most two possibilities for $P$. 

We get two important consequences from Lemma \ref{lem:split_at_first_2string}.  The first is that it allows us to establish that the set of 1-patterns that can occur in a polygon in $\tube^*$ without 2-sections, 
$\hat{\mathcal{A}}_\mathrm{P}$, is equal to the set of all proper 1-patterns from ${\mathcal{A}}_\mathrm{P}$ with no 2-sections.  That is, we show that any proper 1-pattern in ${\mathcal{A}}_\mathrm{P}$ with no 2-sections can occur in some polygon without 2-sections.  (We note that we have already used this result in the previous section.)  The argument is as follows.  Let $\beta$ be a proper 1-pattern with no 2-sections and let $P$ be any polygon containing $\beta$. If $P$ has no 2-sections then we are done, otherwise suppose it has $t>1$ 2-sections.  Lemma~\ref{lem:split_at_first_2string} shows how to take a polygon with $t$ 2-sections and break it apart into $t+1$ polygons with no 2-sections. Since $\beta$ has no 2-sections it will not have been altered in this process and hence one of the resulting polygons without 2-sections must contain $\beta$.

The second consequence of Lemma~\ref{lem:split_at_first_2string} is Lemma~\ref{lem:unknots_tube*_2strings_remove}, which is the final result needed for the proof of Theorem~\ref{thm:density_of_2strings-nonHam}.



\begin{proof}[Proof of Lemma~\ref{lem:unknots_tube*_2strings_remove}]
    Let the constant $E$ in the lemma be $E=2C+2D+2$, where $C$ and $D$ are the constants used in Lemmas~\ref{lem:concat_no_2strings} and~\ref{lem:split_at_first_2string}.
	The method outlined in Lemma~\ref{lem:split_at_first_2string} shows how to take a polygon of length $n$ with $t$ 2-sections and break it apart into $t+1$ polygons with no 2-sections, with total length $n+2Dt$. The method outlined in Lemma~\ref{lem:concat_no_2strings} then shows how to join those polygons back together into one large polygon of length $n+(2C+2D+2)t$ with no 2-sections. This process is reversible, with knowledge of where the splits and joins occurred ($\binom{\frac{n}{2}}{t}$ is an upper bound on the number of possibilities) and knowledge of which of a pair of possible choices at each cut formed the original 2-section (at most two choices).
\end{proof}


\subsection{General pattern theorem for unknot polygons and ratio limit} \label{ssec:generalpatternthm}

Theorem~\ref{thm:density_of_2strings-nonHam} allows us to prove  a more general pattern theorem for unknot polygons (Corollary~\ref{thm:density_of_connectsumpatterns}) and then  a ratio limit theorem.  The method of proof for the general pattern theorem for unknots follows that of \cite[Theorem 2.1]{Madras99} and the ratio limit theorem follows directly from \cite[Theorem 2.2]{Madras99}.  The ratio limit theorem is useful for going from the first pair of bounds to the second pair of bounds in Theorem~\ref{thm-bounds}{}.


\begin{proof}[Proof of Corollary~\ref{thm:density_of_connectsumpatterns}]
	
	Fix any $\epsilon>0$ that satisfies  Theorem~\ref{thm:density_of_2strings-nonHam}. Define ${\cal P}_n(0_1,\epsilon)$ to be the set of $n$-edge unknot polygons with more than $\epsilon n$ 2-sections. 
	Let $P$ be an unknot connected sum pattern.
	
	Take any positive $\epsilon_P$ and define $p_{\tube^*,n}(0_1, > \epsilon n , P, \leq \epsilon_P n)$ to be the number of $n$ edge unknot polygons in $\tube^*$ with more than $\epsilon n$ 2-sections and at most $\epsilon_P n$ $x$-translates of $P$. 
	Note that 
	\begin{equation}
		p_{\tube^*,n}(0_1, P, \leq \epsilon_P n) \leq    p_{\tube^*,n}(0_1, > \epsilon n , P, \leq \epsilon_P n) + p_{\tube^*,n}(0_1, \leq \epsilon n ),
		\label{eqS39}
	\end{equation} 
	so if $p_{\tube^*,n}(0_1, > \epsilon n , P, \leq \epsilon_P n)\leq p_{\tube^*,n}(0_1, \leq \epsilon n )$ for all $n$ sufficiently large, then we have the required result from Theorem~\ref{thm:density_of_2strings-nonHam}.  Otherwise, 
    let ${\mathcal N(\epsilon_P)}$ be the (infinite) sequence of $n$'s  such that for $n \in \mathcal N(\epsilon_P)$, we have $p_{\tube^*,n}(0_1, > \epsilon n , P, \leq \epsilon_P n)>p_{\tube^*,n}(0_1, \leq \epsilon n )>0$.
	For any $n\in \mathcal N(\epsilon_P)$, take $\pi \in {\cal P}_n(0_1,\epsilon)$ having at most  $\lfloor \epsilon_P n\rfloor$ translates of $P$. 
	The strategy is to create a new polygon from $\pi$ that has more copies of $P$ and show that there must be exponentially more of such polygons.  
	
	Take  positive $\tau$ such that $\lfloor \epsilon n\rfloor  > \lfloor \tau n \rfloor > 1 $.  Choose any $\lfloor \tau n \rfloor$ of the first $\lfloor \epsilon n\rfloor$ 2-sections and ``insert'' the pattern $P$ at these 2-sections. Inserting $P$ at a 2-section can involve inserting a ``pad'' of edges on each end of $P$ to connect it to the 2-section. This can always be done so that the total number of edges added by an insertion is the same; let $k_P$ be this number. 
	The result is a new size $n+k_P\lfloor \tau n \rfloor$ unknot polygon with at most  $\lfloor \epsilon_P n\rfloor  + \lfloor \tau n \rfloor$ translates of $P$.  
	
	This gives for any $n\in \mathcal N(\epsilon_P)$:
	\begin{equation}
		\binom{\lfloor \epsilon n\rfloor}{\lfloor \tau n \rfloor} p_{\tube^*,n}(0_1, > \epsilon n , P, \leq \epsilon_P n) \leq    \binom{\lfloor \epsilon_P n\rfloor + \lfloor \tau n \rfloor}{\lfloor \tau n \rfloor} p_{\tube^*,n+k_P\lfloor \tau n \rfloor}(0_1).
	\end{equation}
	
	Taking logs on both sides, dividing by $n$ and then taking the $\limsup$ as $n$ goes to infinity gives 
	\begin{multline}
		\limsup_{n\to\infty;\,n\in \mathcal N(\epsilon_P)} \frac{1}{n} \log p_{\tube^*,n}(0_1, > \epsilon n , P, \leq \epsilon_P n) \\ \leq    \log\mu_{\tube^*,0_1} + \tau\left[\log\frac{(\mu_{\tube^*,0_1})^{k_P}(\epsilon_P+\tau)}{ (\epsilon-\tau)}\right] +\epsilon_P\left[\log\frac{(\epsilon_P+\tau)}{\epsilon_P}\right] +\epsilon\log\frac{(\epsilon -\tau)}{\epsilon},
	\end{multline}
	where the $\limsup$ is taken through the sequence $\mathcal N(\epsilon_P)$.
	
	Define $\displaystyle{Q=\frac{(\mu_{\tube^*,0_1})^{k_P}3^{3/2}}{2}}$. Then consider any $t\in (0,1)$ and set $\tau=t\epsilon$,  $\epsilon_P=\tau/2=t\epsilon/2$ to obtain
	\begin{equation}
		\limsup_{n\to\infty;\, n\in \mathcal N(\epsilon_P)} \frac{1}{n} \log p_{\tube^*,n}(0_1, > \epsilon n , P, \leq \epsilon_P n) \leq    \log\mu_{\tube^*,0_1} + \epsilon\log [t^t(1-t)^{1-t}Q^t]  .
	\end{equation}
	Taking $t=1/(Q+1)$ then gives $\epsilon_P=\epsilon/(2Q+2)$ and we obtain
	\begin{equation}
		\limsup_{n\to\infty;\, n\in \mathcal N(\epsilon_P)} \frac{1}{n} \log p_{\tube^*,n}(0_1, > \epsilon n , P, \leq \epsilon_P n) \leq    \log\mu_{\tube^*,0_1} + \epsilon\log \left[\frac{Q}{Q+1}\right] < \log\mu_{\tube^*,0_1}.
	\end{equation}
	Hence using~\eqref{eqS39} with $\epsilon_P=\epsilon/(2Q+2)$ along with the definition of $\mathcal N(\epsilon_P)$ gives
	\begin{align}
		\limsup_{n\to\infty}  \frac{1}{n} &\log  p_{\tube^*,n}(0_1, P, \leq \epsilon_P n)\notag \\
		&\leq   \max\left\{  \limsup_{n\to\infty} \frac{1}{n} \log p_{\tube^*,n}(0_1, > \epsilon n , P, \leq \epsilon_P n) ,  \limsup_{n\to\infty} \frac{1}{n} \log p_{\tube^*,n}(0_1, \leq \epsilon n ) \right\} \notag \\ &< \log\mu_{\tube^*,0_1},
	\end{align} 
	where here the $\limsup$ is taken through all even $n$. 
\end{proof}

Because we know the limit defining $\mu_{\tube^*,0_1}$ exists, when taken through even values of $n$, and because there are two connected sum unknot patterns $U$ and $V$ (see Figure~\ref{fig:UV}) which can be interchanged to change the number of edges in an unknot polygon from $n+2$ to $n$ or vice versa, then Corollary~\ref{thm:density_of_connectsumpatterns} combined with Madras' Theorem 2.2~\cite{Madras99} gives the following ratio limit result.

\begin{cor}\label{cor:ratiolimitcor}
	\begin{equation}
		\label{ratiolimit2}
		\lim_{n\to\infty} \frac{p_{\tube^*,n+2}(0_1)}{p_{\tube^*,n}(0_1)} =(\mu_{\tube^*,0_1})^2 ,
	\end{equation} 
	where the limit is taken through even values of $n$. More generally,
    \begin{equation}
\lim_{n\to\infty}\frac{p_{\tube^*,n+m}(0_1)}{p_{\tube^*,n}(0_1)}=(\mu_{\tube^*,0_1})^m, 
\label{ratiolimit}
\end{equation}
	again with $n$ even. Thus for any given even $m$ and for even $n$ sufficiently large, there exist constants $C_1$ and $C_2$ such that:
	\begin{equation}
		\label{ratiobounds}
		C_1 \,p_{\tube^*,n}(0_1)\leq  p_{\tube^*,n+m}(0_1) \leq C_2\, p_{\tube^*,n}(0_1).
	\end{equation} 
\end{cor}

%
%

\begin{figure}
	\centering
	\begin{subfigure}{\textwidth}
		\centering
		\resizebox{!}{10ex}{
			\begin{tikzpicture}[scale=0.8]
				\begin{knot}[consider self intersections=true, clip width=3, clip radius=3pt, end tolerance=1pt]
					\strand[very thick] (0.5,1,1) 
					{} -- (1,1,1) node [vblack] {}  -- (2,1,1) node [vblack] {} -- (2.5,1,1) 
					{} 
					(2.5,2,1) 
					{} -- (2,2,1) node [vblack] {} 
					-- (1,2,1) node [vblack] {} -- (0.5,2,1) 
					{};
					\draw (1,1,0) node [vempty] {}; 
					\draw (2,1,0) node [vempty] {}; 
					\draw (1,2,0) node [vempty] {}; 
					\draw (2,2,0) node [vempty] {}; 
					\flipcrossings{3,4}
				\end{knot}
				
				\draw[line width=3pt, <->] (3.5,1,0.75) -- (4.5,1,0.75);
				
				\tikzset{xshift=4.5cm}
				\begin{knot}[consider self intersections=true, clip width=3, clip radius=3pt, end tolerance=1pt]
					\strand[very thick] (0.5,1,1) 
					{} -- (1,1,1) node [vblack] {}  -- (2,1,1) node [vblack] {} -- (2.5,1,1) 
					{} 
					(2.5,2,1) 
					{} -- (2,2,1) node [vblack] {} -- (2,2,0) node [vblack] {} -- (1,2,0) node [vblack] {} -- (1,2,1) node [vblack] {} -- (0.5,2,1) 
					{};
					\draw (1,1,0) node [vempty] {}; 
					\draw (2,1,0) node [vempty] {}; 
					\flipcrossings{3,4}
				\end{knot}
			\end{tikzpicture}
		}
	\end{subfigure}

	\caption{Unknot connected sum pattern $V$ (on the left) can be interchanged with unknot pattern $U$ (on the right) to change an unknot polygon size by $\pm 2$ edges. }
	\label{fig:UV}
\end{figure}

\section{Proof of the main theorem}
\label{sec:Overview}
In this section we combine the arguments to prove Theorem~\ref{thm-bounds}. 

For the upper bound, we obtained new knot theory results about 4-plats.
In Theorem~\ref{thm:unknottinginsertion} we establish that any 4-plat diagram can be unknotted by the insertion of a braid word, where appropriate braid words can be determined from a minimal diagram of the 4-plat. Combining these results with properties of $\tube^*$, in Theorem~\ref{thm-insertion}  we then established that any lattice embedding of a non-split link in $\tube^*$ can be converted to an unknot polygon by the insertion of braid blocks. %
Theorems~\ref{thm-insertion} and~\ref{thm:unknottinginsertion} and 
their proofs were presented 
in Section~\ref{sec:newupper}.  

For the lower bound, we proved a pattern theorem for unknot polygons in $\tube^*$ by proving that unknot polygons have a density of 2-sections, 
Theorem~\ref{thm:density_of_2strings-nonHam}.  The theorem along with 
its proof were given in Section~\ref{sec:newlower}. 

\begin{proof}[Proof of Theorem~\ref{thm-bounds} from Theorems~\ref{thm-insertion} and~\ref{thm:density_of_2strings-nonHam}] 
By Theorem~\ref{thm-insertion}, we can insert a finite number of $s$-blocks to change an embedding of
  $L$ into an unknot polygon. Then
 $p_{\tube^*,n}(L)\le 
 \binom{n+d_L}{
 f_L}p_{\tube^*,n+d_L}(0_1)$, where $d_L$ is a fixed number (the total number of edges in the inserted parts) which is bounded above by a finite number times 
 $f_L$.
 The binomial term in the upper bound accounts for the number of ways different embeddings of $L$ could lead to the same unknot polygon; this is bounded above by the number of places (polygon edges) the inserted $f_L$ braid blocks could be located in the unknot polygon, namely $\binom{n+d_L}{
 f_L}$. 
Then $\binom{n+d_L}{f_L} \leq d_L \binom{n}{f_L}$ for $n$ sufficiently large.


For the lower bound, consider $L$ a prime link. Let $P$ be a connected sum pattern for $L$ and let $S$ be any 2-section. Then 
from Corollary~\ref{cor:unknot_insert_factors}, 
$P$ can be inserted (additional edges may be needed) at $S$ in a polygon 
(see for example Figure~\ref{fig1intro}D).
Note that any connected sum pattern can be inserted at any $S$ by adding a fixed number of edges. The options can be determined in a way similar (but simpler) to that used for inserting a braid block at a 4-section as illustrated in Figure~\ref{Fig:6-string}~(e). 
	The length of the resulting embedding is increased by some constant $\Delta$ depending on $P$ (but not $S$).

Now take any unknot polygon with $n$ edges and at least $\epsilon n$ 2-sections, with $\epsilon$ the same as in Theorem~\ref{thm:density_of_2strings-nonHam}. By the above, the pattern $P$ can be inserted at any one of those 2-sections, 
and the resulting lattice embedding will have link type $L$. We thus have 
\begin{equation}
 {\binom{\epsilon n}{1}} \left[p_{\tube^*,n}(0_1)-p_{\tube^*,n}(0_1, \leq \epsilon n)\right]\le p_{\tube^*,n+\Delta}(L).   
\end{equation} 
However, from Theorem~\ref{thm:density_of_2strings-nonHam} we have 
\begin{equation}
\lim_{n\to\infty} \frac{p_{\tube^*,n}(0_1)-p_{\tube^*,n}(0_1, \leq \epsilon n)}{p_{\tube^*,n}(0_1)}=1
\end{equation}
so that the numerator can be made arbitrarily close to the denominator for sufficiently large $n$. Hence, for example, there exists $N>0$ such that for all $n\geq N$,  $\frac{1}{2}\binom{\epsilon n}{1} p_{\tube^*,n}(0_1)\le p_{\tube^*,n+\Delta}(L)$.

This argument can be extended in a straightforward way to the case where $L$ is composite (see Corollary~\ref{cor:unknot_insert_factors} to Proposition~\ref{prop:which_knots_fit} in Section~\ref{subsec:4-plats}), and the lower bound follows.

The arguments above establish~\eqref{mainrelation}.
Equation~\eqref{finalform} can be obtained from~\eqref{mainrelation} by combining the
fact that ${\displaystyle \lim_{n\to\infty}} \frac{\log \binom{an}{b}}{\log {n }}=b$ with
~\eqref{ratiobounds} from Corollary~\ref{cor:ratiolimitcor}.
Note that the resulting constant $C_1$ in \eqref{finalform} will depend at least on $\epsilon$ and $e_L$ and the constant $C_2$ will depend at least on $d_L$ and $b_L$ so that  $C_1< C_2$.
%
%
\end{proof}

We note that Theorem~\ref{thm-insertion} combined with simpler known lower bounds (obtained by concatenating an arbitrary unknot embedding to an embedding of a given knot or link)  
is sufficient to 
prove that the exponential growth rate of fixed knot-type or link-type embeddings is the same as that for unknots. This establishes Conjecture~\ref{mainconjtube} (a). However, since this result is also a consequence of Theorem~\ref{thm-bounds}{} we do not give the details here. 

The proof of Theorem~\ref{thm-bounds} gives as a corollary %
a general pattern theorem for embeddings of non-split $L$.  The corollary is stated as Corollary~\ref{cor:density_of_connectsumpatterns} and proved in 
the next section. 
These results allow us to investigate the ``size'' of the linked region in a lattice link and 
we also present results related to this in the next section.

\section{The size of the linked region via a pattern theorem for fixed-link-type embeddings 
}\label{sec:size}

A general pattern theorem for fixed-link-type embeddings can be obtained as a corollary to Theorem~\ref{thm-bounds}{} and its proof.

\begin{cor}\label{cor:density_of_connectsumpatterns}
	Given a non-split link $L$ embeddable in $\tube^*$, let $P$ be a 2-section or an unknot connected sum pattern.
	Let $p_{\tube^*,n}(L,P,\leq\! k)$ be the number of embeddings of $L$ of length $n$ in $\tube^*$ which contain at most $k$ translates of $P$. Then there exists an $\epsilon_{L,P} > 0$ such that
	\begin{equation}\label{eqn:link_density_P}\begin{split}
			\limsup_{n\to\infty} \frac1n \log p_{\tube^*,n}(L,P, \leq\! \epsilon_{L,P} n) &< \lim_{n\to\infty} \frac1n \log p_{\tube^*,n}(L) \\ &=  \log\mu_{\tube^*,0_1} = \kappa_{\tube^*}(0_1),
	\end{split}\end{equation}
	where $n$ is taken through multiples of 2.
\end{cor}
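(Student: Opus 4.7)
The strategy is to reduce the link case to the unknot pattern theorems already in hand, namely Theorem~\ref{thm:density_of_2strings-nonHam} (when $P$ is a 2-section) and Corollary~\ref{thm:density_of_connectsumpatterns} (when $P$ is an unknot connected sum pattern). The reduction is powered by the unknotting construction of Theorem~\ref{thm-insertion}, in the same spirit as the upper-bound argument for Theorem~\ref{thm-bounds}.

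First I would apply Theorem~\ref{thm-insertion} to each embedding of $L$ counted by $p_{\tube^*,n}(L,P,\leq \epsilon_{L,P} n)$, converting it to an unknot polygon of length at most $n+d_L$ by inserting $f_L$ braid blocks, each of span at most $3c+8$ (where $c$ is the maximum crossing number over the prime factors of $L$). Since $P$ has span bounded independently of $n$, and each braid block insertion alters the embedding only inside a window of bounded span, each insertion can create or destroy at most a constant number $c_{L,P}$ of translates of $P$. Hence an embedding of $L$ with at most $\epsilon_{L,P} n$ translates of $P$ is sent to an unknot polygon with at most $\epsilon_{L,P} n + c_{L,P} f_L$ translates of $P$.

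Next I would control the multiplicity of this map: from any unknot polygon of length at most $n+d_L$ one recovers at most $b_L\binom{n+d_L}{f_L}$ embeddings of $L$ by choosing which $f_L$ braid blocks to remove and from where, for a constant $b_L$ depending only on $L$. This yields the key inequality
\[
p_{\tube^*,n}(L,P,\leq \epsilon_{L,P} n) \;\leq\; b_L \binom{n+d_L}{f_L}\, p_{\tube^*,n+d_L}\!\left(0_1,\,P,\,\leq \epsilon_{L,P} n + c_{L,P} f_L\right).
\]
Finally I would invoke the relevant unknot pattern theorem to extract an $\epsilon'_P > 0$ with $\limsup_m m^{-1}\log p_{\tube^*,m}(0_1,P,\leq \epsilon'_P m) < \kappa_{\tube^*,0_1}$, choose $\epsilon_{L,P}$ small enough that $\epsilon_{L,P} n + c_{L,P} f_L \leq \epsilon'_P(n+d_L)$ for all sufficiently large $n$, and observe that $n^{-1}\log\binom{n+d_L}{f_L}\to 0$. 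Combined with $\lim_n n^{-1}\log p_{\tube^*,n}(L) = \kappa_{\tube^*,0_1}$ (an immediate consequence of Theorem~\ref{thm-bounds}), this gives the claimed strict inequality.

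The main technical point to verify is that the constant $c_{L,P}$ bounding the change in the count of translates of $P$ per braid block insertion is indeed uniform over all embeddings. This relies on both the braid blocks and $P$ having spans bounded independently of $n$, so that only boundedly many translates of $P$ can overlap the insertion window and thus be affected. Once this uniform bound is made precise, the remainder of the argument is essentially the bookkeeping calculation already carried out in the proof of Corollary~\ref{thm:density_of_connectsumpatterns}.
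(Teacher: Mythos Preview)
Your proposal is correct and takes essentially the same approach as the paper: convert an $L$-embedding to an unknot via the braid-block insertions of Theorem~\ref{thm-insertion}, bound the number of new translates of $P$ created by each insertion by a constant depending only on $L$ and $P$, control the multiplicity by $\binom{n+d_L}{f_L}$, and then invoke the unknot pattern theorem (Theorem~\ref{thm:density_of_2strings-nonHam} or Corollary~\ref{thm:density_of_connectsumpatterns}) with a suitably small $\epsilon_{L,P}$. The paper's proof is organized identically, stating the key inequality \neweqref{eqkversion} and then choosing $\epsilon_{L,P}<\epsilon_P/2$; your remark that the uniform bound $c_{L,P}$ follows from the bounded spans of both $P$ and the braid blocks is exactly the justification the paper leaves implicit.
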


We delay the proof of  this Corollary to Section~\ref{ssec:generallinkpatternthm}.
First we establish several connections 
between the scaling form established in 
~\eqref{finalform}, the consequences of Corollary~\ref{cor:density_of_connectsumpatterns} and the \textit{localization} of knots in polymers.  

Here the term localization refers to the size of the ``knotted part'' of a polygon, relative to the length of the whole polygon.
Different approaches have been suggested for defining the size of the knotted part of a polygon (e.g.~\cite{micheletti_polymers_2011}). A natural definition of this size $k(\pi)$ for an $n$-edge embedding $\pi$ of a link $L$ in a tube is as follows (see also \cite{BEISS18}). First divide the embedding at each 2-section to create a set of patterns which can each be closed off into embeddings of links. Up to $f_L$ of the resulting embeddings will be non-trivial links and the sizes of these non-trivial pieces can be summed to give the $k(\pi)$, \emph{size of the linked part of $\pi$}.

With this definition in hand, one can measure the average size of the knotted part $\langle k\rangle_n$ of an embedding of length $n$ and link type $L$, according to whichever probability distribution is appropriate (e.g.\ the uniform distribution). Letting $n$ get large, we can compare $\langle k\rangle_n$ to $n$. If $\langle k\rangle_n = o(n)$, then $L$ is said to be ``localized'' in the embeddings. Otherwise it is said to be ``not localized''.   Further, if $\langle k \rangle_n = O(1)$, then $L$ is said to be ``strongly localized'' in the embeddings.


We make two initial observations.  First, 
%
from Corollary~\ref{cor:density_of_connectsumpatterns},
all but exponentially few sufficiently large $n$-edge embeddings in $\tube^*$ of non-split $L$ contain a density  ($\epsilon n$) of 2-sections.  Dividing an embedding with $\epsilon n$ 2-sections at those 2-sections yields connected sum patterns with average size $1/\epsilon$.  Thus one expects the link patterns to have size $O(1)$, i.e.\ to be strongly localized.  Second, we note that the upper bound proof of Theorem~\ref{thm-bounds} establishes that one can unknot a link by the insertion of $f_L$ $O(1)$-size braid blocks, so that each ``essential'' part of the link can be removed by making $O(1)$ changes, another indication of strong localization. We next discuss further results that expand on these observations. 
Detailed proofs related to this discussion are given in Section~\ref{ssec:localglobalresults}. 

Consider first
 sets of embeddings of a link $L$ in $\tube^*$ 
 such that the size of the linked part 
 is $\alpha n =O(n)$, for any fixed $0<\alpha <1$. Such a linked-part-size limits the number of 2-sections that can occur and thus by Corollary~\ref{cor:density_of_connectsumpatterns}  these embeddings are exponentially rare amongst all embeddings of $L$.
 This is made more precise in Corollary~\ref{cor:corollarys4} of Section~\ref{ssec:localglobalresults}. 
In contrast, sets of embeddings of $L$ where the linked part is strongly localized (bounded in size by a fixed amount) are \emph{not} exponentially rare; instead the embedding counts follow the same scaling form as $p_{\tube^*,n}(L)$ 
(see Section~\ref{ssec:localglobalresults}, Corollary~\ref{cor:corollarys5}).      

Furthermore,  the conclusions of Theorem~\ref{thm-bounds} hold for any subset of embeddings of $L$ where
some element of the set can be obtained from an unknot polygon by inserting any fixed-size link patterns corresponding to the prime factors of $L$.
Thus, aside from the size of the linked part, one can also restrict other geometric features of the linked part and get the  same scaling form. 
In particular, for the case of a prime knot $L$, two modes have been identified for connected sum
knot patterns: the 2-filament mode (also called double filament or non-local) and the 1-filament mode (also called single filament or local) \cite{Sharma19, Suma17,BEISS18}. See Figure~\ref{fig2tubedef} for an example of each type of trefoil pattern in $\tube^*$; see \cite{BEISS18} for a full definition. Restricting to knot patterns in either one of these modes yields the same types of bounds as in  Theorem~\ref{thm-bounds}. Hence, for a prime knot, the number of embeddings  with the knot occurring in the 1-filament mode follows the same scaling form as that for the 2-filament mode, except for the constant factor 
(Corollary~\ref{cor:corollarys6}).  
Thus the likelihood of one type of knot mode occurring over another is determined  by the constants in the scaling form. For $\tube^*$, numerical evidence indicates that the 2-filament mode is more likely \cite{BEISS18}.  This could be related to the fact \cite[Result 5]{BEISS18} that the span of a smallest 2-filament mode knot pattern is smaller than that for a 1-filament mode knot pattern in $\tube^*$. (Figure ~\ref{fig2tubedef} shows smallest trefoil patterns of each type in $\tube^*$.)
Recent numerical results based on data from \cite{eng_phd_thesis} have shown that the difference between the occurrence probabilities of the two modes decreases with tube size although the 2-filament mode still dominates at tube size $3\times2$.  
Note that if one measures knot-size by the arc length measure of~\cite{micheletti_orlandini_2012} instead of by the knot pattern size defined here, then the 1-filament mode patterns yield comparable knot-sizes by either measure but the 2-filament mode patterns could have a substantially larger knot-size by the arc-length measure.  
Based on this, the off-lattice model results of~\cite{micheletti_orlandini_2012} at small tube sizes are consistent with knot localization and with the 2-filament mode being more likely than the 1-filament mode \cite{BEISS18}.

\begin{figure}[h!]
\centering
\begin{subfigure}
{0.45\textwidth}
\centering
\resizebox{0.833\textwidth}{!}{
\begin{tikzpicture}[rotate around x=0, scale=1.7]
\tikzset{vblack/.style={circle, draw=black, line width=1.pt, fill=black, inner sep=2pt}}
\tikzset{vblack/.style={circle, draw=black, line width=1.pt, fill=black, inner sep=2pt}}
\tikzset{vblack/.style={circle, draw=black, line width=1.pt, fill=black, inner sep=2pt}}
\node[vblack] at (1,0,0) {}; \node[vblack] at (1,1,0) {}; \node[vblack] at (1,2,0) {}; \node[vblack] at (1,0,1) {}; \node[vblack] at (1,1,1) {}; \node[vblack] at (1,2,1) {};
\node[vblack] at (2,0,0) {}; \node[vblack] at (2,1,0) {}; \node[vblack] at (2,2,0) {}; \node[vblack] at (2,0,1) {}; \node[vblack] at (2,2,1) {};
\node[vblack] at (3,0,0) {}; \node[vblack] at (3,1,0) {}; \node[vblack] at (3,2,0) {}; \node[vblack] at (3,0,1) {}; \node[vblack] at (3,2,1) {};
\node[vblack] at (4,0,0) {}; \node[vblack] at (4,1,0) {}; \node[vblack] at (4,2,0) {}; \node[vblack] at (4,0,1) {}; \node[vblack] at (4,1,1) {}; \node[vblack] at (4,2,1) {};
\node[vblack] at (5,0,0) {}; \node[vblack] at (5,1,0) {}; \node[vblack] at (5,2,0) {}; \node[vblack] at (5,0,1) {}; \node[vblack] at (5,1,1) {}; \node[vblack] at (5,2,1) {};
\begin{scope}[on background layer]
\begin{knot}[consider self intersections=true, clip width=3, clip radius=5.5pt, end tolerance=1pt]
\strand[line width=2pt, blue] (0.5,0,0) -- (1,0,0) -- (2,0,0) -- (2,1,0) -- (3,1,0) -- (4,1,0) -- (4,1,1) -- (4,0,1) -- (5,0,1) -- (5.5,0,1);
\strand[line width=2pt, red] %
(5.5,2,0) -- (5,2,0) -- (4,2,0) -- (3,2,0) -- (2,2,0) -- (1,2,0) -- (1,1,0) -- (1,1,1) -- (1,0,1) -- (2,0,1) -- (3,0,1) -- (3,0,0) -- (4,0,0) -- (5,0,0) -- (5,1,0) -- (5,1,1) -- (5,2,1) -- (4,2,1) -- (3,2,1) -- (2,2,1) -- (1,2,1) -- (0.5,2,1) ;
\flipcrossings{1,2,5,6,7}
\end{knot}
\end{scope}
\end{tikzpicture}
}
\caption{}
\label{fig:polygon_2x1_2strings}
\end{subfigure}
\begin{subfigure}
{0.45\textwidth}
\resizebox{\textwidth}{!}{
\begin{tikzpicture}[rotate around x=0, scale=1.7]
\tikzset{vblack/.style={circle, draw=black, line width=1.pt, fill=black, inner sep=2pt}}
\tikzset{vblack/.style={circle, draw=black, line width=1.pt, fill=black, inner sep=2pt}}
\tikzset{vblack/.style={circle, draw=black, line width=1.pt, fill=black, inner sep=2pt}}
\node[vblack] at (1,0,0) {}; \node[vblack] at (1,1,0) {}; \node[vblack] at (1,2,0) {}; \node[vblack] at (1,0,1) {}; \node[vblack] at (1,1,1) {}; \node[vblack] at (1,2,1) {};
\node[vblack] at (2,0,0) {}; \node[vblack] at (2,1,0) {}; \node[vblack] at (2,2,0) {}; \node[vblack] at (2,0,1) {}; \node[vblack] at (2,2,1) {};
\node[vblack] at (3,0,0) {}; \node[vblack] at (3,1,0) {}; \node[vblack] at (3,2,0) {}; \node[vblack] at (3,0,1) {}; \node[vblack] at (3,2,1) {};
\node[vblack] at (4,0,0) {}; \node[vblack] at (4,1,0) {}; \node[vblack] at (4,2,0) {}; \node[vblack] at (4,0,1) {}; \node[vblack] at (4,1,1) {}; \node[vblack] at (4,2,1) {};
\node[vblack] at (5,0,0) {}; \node[vblack] at (5,1,0) {}; \node[vblack] at (5,2,0) {}; \node[vblack] at (5,0,1) {}; \node[vblack] at (5,1,1) {}; \node[vblack] at (5,2,1) {};
\node[vblack] at (6,2,0) {}; \node[vblack] at (6,0,1) {}; \node[vblack] at (6,1,1) {}; \node[vblack] at (6,2,1) {};  \node[vblack] at (6,0,0) {}; \node[vblack] at (6,1,0) {};
\begin{scope}[on background layer]
\begin{knot}[consider self intersections=true, clip width=3, clip radius=5.5pt, end tolerance=1pt]
\strand[line width=2pt, blue] (0.5,0,0) -- (1,0,0) -- (2,0,0) -- (2,1,0) -- (3,1,0) -- (4,1,0) -- (4,1,1) -- (4,0,1) -- (5,0,1) -- (6,0,1) -- (6,0,0) -- (6,1,0) -- (6,2,0) -- (5,2,0) -- (4,2,0) -- (3,2,0) -- (2,2,0) -- (1,2,0) -- (1,1,0) -- (1,1,1) -- (1,0,1) -- (2,0,1) -- (3,0,1) -- (3,0,0) -- (4,0,0) -- (5,0,0) -- (5,1,0) -- (5,1,1) -- (6,1,1) -- (6.5,1,1);
\strand[line width=2pt, red]
(6.5,2,1) -- (6,2,1) -- (5,2,1) -- (4,2,1) -- (3,2,1) -- (2,2,1) -- (1,2,1) -- (0.5,2,1); %
\flipcrossings{1,2,6,7,8}%
\end{knot}
\end{scope}
\end{tikzpicture}
}
\caption{}
\label{fig:polygon_2x1_2stringslocal}
\end{subfigure}
\caption{\subref{fig:polygon_2x1_2strings}~A smallest trefoil knot pattern in $\tube^*$; its span is 5.
This is a 2-filament pattern  because both strands of the pattern are needed to create the trefoil. \subref{fig:polygon_2x1_2stringslocal}~A smallest 1-filament knot pattern in $\tube^*$; its span is 6. Only one strand (the blue one) contains the trefoil.}
\label{fig2tubedef}
\end{figure}

Regarding different modes of occurrence for two-component links, we note that prime 4-plats have either one or two components and if they have two components, both components are unknots \cite{BZ13}. 
For a prime two-component link, we define two modes of link patterns: those where all vertices of one component (called here the \textit{one-polygon} mode) are contained in the link pattern and those where that is not the case (called here the \textit{two-polygon} mode)  
(see
Figure~\ref{fig:2modesHopfLink}).  Again, embeddings of each type will have the same scaling form, up to the constant term (Corollary~\ref{cor:corollarys7}).  It appears that for the Hopf link, the one-polygon mode link pattern is smaller in span than the two-polygon mode.  For embeddings of the Hopf link or the Hopf link connected sum with a single prime knot, we thus expect that the one-polygon mode will be more probable in $\tube^*$. However, if both components of the Hopf link are each connected-summed with a prime knot then the two-polygon mode of the Hopf link is the only option.
Similarly, suppose $L=K_1\# L_1\# K_2$ where $L_1$ is a 2-component 2-bridge link and $K_1$ and $K_2$ are 2-bridge knots added to different components of $L_1$. Then neither component of $L$  can  occur in a single-polygon mode. 

\begin{figure}
	\centering
	\begin{tikzpicture}
		\begin{knot}[consider self intersections=true, clip width=3, clip radius=3pt, end tolerance=1pt]
			\strand [very thick, blue] (3.5,1,0) -- (3,1,0) node [vblue] {} -- (2,1,0) node [vblue] {} -- (2,1,1) node [vblue] {} -- (2,0,1) node [vblue] {} -- (1,0,1) node [vblue] {} -- (0,0,1) node [vblue] {} -- (0,0,0) node [vblue] {} -- (0,1,0) node [vblue] {} -- (0,2,0) node [vblue] {} -- (1,2,0) node [vblue] {} -- (2,2,0) node [vblue] {} -- (3,2,0) node [vblue] {} -- (3.5,2,0);
			\strand [very thick, red] (-0.5,1,1) -- (0,1,1) node [vred] {} -- (1,1,1) node [vred] {} -- (1,1,0) node [vred] {} -- (1,0,0) node [vred] {} -- (2,0,0) node [vred] {} -- (3,0,0) node [vred] {} -- (3,0,1) node [vred] {} -- (3,1,1) node [vred] {} -- (3,2,1) node [vred] {} -- (2,2,1) node [vred] {} -- (1,2,1) node [vred] {} -- (0,2,1) node [vred] {} -- (-0.5,2,1);
			\flipcrossings{1,4,5}
		\end{knot}
		\begin{scope}[yshift=-3cm]
			\begin{knot}[consider self intersections=true, clip width=3, clip radius=3pt, end tolerance=1pt]
				\strand [very thick, blue] (-0.5,0,1) -- (0,0,1) node [vblue] {} -- (1,0,1) node [vblue] {} -- (1,1,1) node [vblue] {} -- (1,1,0) node [vblue] {} -- (2,1,0) node [vblue] {} -- (2.5,1,0);
				\strand [very thick, blue] (-0.5,2,0) -- (0,2,0) node [vblue] {} -- (1,2,0) node [vblue] {} -- (2,2,0) node [vblue] {} -- (2.5,2,0);
				\strand [very thick, red] (0,1,1) node [vred] {} -- (0,1,0) node [vred] {} -- (0,0,0) node [vred] {} -- (1,0,0) node [vred] {} -- (2,0,0) node [vred] {} -- (2,0,1) node [vred] {} -- (2,1,1) node [vred] {} -- (2,2,1) node [vred] {} -- (1,2,1) node [vred] {} -- (0,2,1) node [vred] {} -- cycle;
				\flipcrossings{3,4,5}
			\end{knot}
		\end{scope}
	\end{tikzpicture}
	\caption{Two modes of the Hopf link: the two-polygon mode (top) appears to require more edges than the one-polygon mode (bottom).
	}\label{fig:2modesHopfLink}
\end{figure}


In the following sections we give the detailed proofs of the results outlined above.
In particular, we prove how Theorem~\ref{thm-bounds} leads to the general pattern theorem for fixed-link-type embeddings (Corollary \ref{cor:density_of_connectsumpatterns}) and how this in turn yields conclusions about the size of the linked region. 

\subsection{Proof of a general pattern theorem for fixed-link-type embeddings} \label{ssec:generallinkpatternthm}



\begin{proof}[Proof of Corollary \ref{cor:density_of_connectsumpatterns}]
	Consider $P$ to be a 2-section or an unknot connected sum pattern.
	Suppose first that $L$ is prime. 
	Then any embedding of $L$ with at most $k$ translates of $P$ can be converted to an unknot polygon by the insertion of a braid block from $S_L$.  The braid block has span $s_L$ and hence this insertion creates at most $s_L$ additional translates of $P$. Thus there exists a number $b_L\geq 0$ such that 
	\begin{equation}
		p_{\tube^*,n}(L, P, \leq k) \leq    \binom{ n+b_L }{1} p_{\tube^*,n+b_L}(0_1,P, \leq k+s_L).
	\end{equation}
	More generally for $L$ with $f_L$ prime factors, there exists a number $B_L\geq 0$ such that 
	\begin{equation}
		p_{\tube^*,n}(L, P, \leq k) \leq    \binom{ n+B_L }{f_L} p_{\tube^*,n+B_L}(0_1,P, \leq k+f_Ls_L).
		\label{eqkversion}
	\end{equation}
	Fix an $\epsilon_P$ from Corollary~\ref{thm:density_of_connectsumpatterns} and consider $n$ sufficiently large so that $-\epsilon_P/2< (\epsilon_PB_L-f_Ls_L)/n < \epsilon_P/2$, and take $0<\epsilon_{L,P}<\epsilon_P/2$. Hence 
	$\epsilon_{L,P} < \epsilon_{P}+(\epsilon_PB_L-f_Ls_L)/n$ so that $\epsilon_{L,P}n+f_Ls_L < \epsilon_{P}(n+B_L)$. Then setting $k=\epsilon_{L,P}n$ in~\eqref{eqkversion}, taking logarithms and $\limsup$s yields the required result:
	\begin{align}
			\limsup_{n\to\infty} \frac1n \log p_{\tube^*,n}(L,P, \leq\! \epsilon_{L,P} n) &\leq \limsup_{n\to\infty} \frac1n \log p_{\tube^*,n+B_L}(0_1,P, \leq\! \epsilon_{P}(n+B_L)) \notag \\ & <\lim_{n\to\infty} \frac1n \log p_{\tube^*,n}(0_1) \notag \\ & =\lim_{n\to\infty} \frac1n \log p_{\tube^*,n}(L) = \log\mu_{\tube^*,0_1}= \kappa_{\tube^*}(0_1).
	\end{align}
\end{proof}

\subsection{Proofs regarding the size and mode of the linked region}\label{ssec:localglobalresults}

In this section we give more details of the arguments outlined in the introduction 
of this section
regarding the size and mode of the linked region.

We establish first that sets of embeddings of a non-split link $L$ in $\tube^*$ for which the size of the linked part of $L$ is $O(n)$ are exponentially rare amongst all embeddings of $L$.
\begin{cor}[Corollary of 
Corollary~\ref{cor:density_of_connectsumpatterns}
]\label{cor:corollarys4}
	Given any $\alpha \in(0,1]$ and even $n$, let $p_{\tube^*,n}(L,\alpha n)$  be the number of  embeddings of non-split $L$ in $\tube^*$ with at least one vertex in the plane $x=0$ and such that the size of the linked part is  $k_n=\lfloor \alpha n\rfloor$. Then
	\begin{equation}
		\limsup_{n\to\infty} \frac1n \log p_{\tube^*,n}(L,   \alpha n)  < \log\mu_{\tube^*,0_1}.
	\end{equation}
\end{cor}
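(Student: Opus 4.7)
The plan is to decompose each embedding counted by $p_{\tube^*,n}(L,\alpha n)$ into an underlying unknot "skeleton" together with a small collection of pieces having no interior 2-sections, and then exploit the exponential gap $\hat\mu_{\tube^*} < \mu_{\tube^*,0_1}$ established in \neweqref{eqn:unknots_no_2strings_bounds}.

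First I would set up the decomposition. Given an $n$-edge embedding $\pi$ of $L$ whose linked part has size exactly $k = \lfloor \alpha n\rfloor$, cutting $\pi$ at each of its 2-sections (and, if needed, at the hidden 2-sections, as in Lemma~\ref{f_L}) partitions $\pi$ into a sequence of connected-sum patterns, of which exactly $f_L$ are link patterns (with total size $k$) and the remainder are unknot patterns (with total size $n-k$ up to an $O(f_L)$ bookkeeping correction from the shared boundary edges). Joining the unknot patterns across the $f_L$ sites where the link patterns sit yields an unknot polygon of length $m = n - k + O(f_L)$, and the original embedding is uniquely reconstructed from the pair (skeleton, ordered $f_L$-tuple of link patterns together with insertion sites).

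Next I would bound each factor. By definition of $\kappa_{\tube^*}(0_1)$, for any $\epsilon>0$ and all large $m$, the number of unknot polygon skeletons is at most $e^{m(\log\mu_{\tube^*,0_1}+\epsilon)}$. The number of choices of insertion sites is bounded by $\binom{m+O(f_L)}{f_L} = O(n^{f_L})$. Crucially, since each link pattern of size $s$ has no interior 2-sections, the number of link patterns of size $s$ is bounded above by the number of length-$s$ walks in the graph underlying the transfer matrix $\hat T(x)$ of Section~\ref{ssec:no_2_sections}, which grows at the same rate $\hat\mu_{\tube^*}$ as the count of closed polygons with no 2-sections (since the Perron–Frobenius eigenvalue of $\hat T(x)$ controls both). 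Hence for $\epsilon>0$ and large $s$, the number of link patterns of size $s$ is at most $e^{s(\log\hat\mu_{\tube^*}+\epsilon)}$, and summing the product over compositions $k_1+\cdots+k_{f_L}=k+O(1)$ costs only a polynomial-in-$n$ factor. Putting these together,
\begin{equation}
p_{\tube^*,n}(L,\alpha n) \;\leq\; \mathrm{poly}(n)\cdot e^{(n-k)(\log\mu_{\tube^*,0_1}+\epsilon)} \cdot e^{k(\log\hat\mu_{\tube^*}+\epsilon)}.
\end{equation}

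Finally, taking $\frac{1}{n}\log$ with $k = \lfloor \alpha n\rfloor$ and passing to the $\limsup$ yields
\begin{equation}
\limsup_{n\to\infty}\frac{1}{n}\log p_{\tube^*,n}(L,\alpha n) \;\leq\; (1-\alpha)\log\mu_{\tube^*,0_1} + \alpha\log\hat\mu_{\tube^*} + \epsilon.
\end{equation}
By \neweqref{eqn:unknots_no_2strings_bounds} we have $\log\hat\mu_{\tube^*} < \log\mu_{\tube^*,0_1}$; choosing any $\epsilon < \alpha(\log\mu_{\tube^*,0_1}-\log\hat\mu_{\tube^*})$ makes the right-hand side strictly less than $\log\mu_{\tube^*,0_1}$, which is the desired conclusion. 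The main obstacle is the rigorous bound on link patterns of size $s$: one must verify that sub-blocks (as opposed to closed polygons) with no 2-sections are indeed governed by the same spectral radius $\hat\mu_{\tube^*}$, which reduces to a short adaptation of the transfer-matrix irreducibility and concatenation arguments of Section~\ref{ssec:no_2_sections} (via Lemma~\ref{lem:concat_no_2strings}) from the closed case to the open sub-block case.
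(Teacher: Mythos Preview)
Your argument is correct and reaches the same endpoint
\[
\limsup_{n\to\infty}\tfrac1n\log p_{\tube^*,n}(L,\alpha n)\;\le\;(1-\alpha)\log\mu_{\tube^*,0_1}+\alpha\log\hat\mu_{\tube^*}
\]
as the paper does, but the route is genuinely different. The paper, after the same decomposition at 2-sections, \emph{concatenates} the $j\le f_L$ link patterns into a single embedding of $L$ having at most $2f_L+c_L$ 2-sections, and then invokes Corollary~\ref{cor:density_of_connectsumpatterns} (the general fixed-link-type pattern theorem, which in turn rests on Theorem~\ref{thm-bounds} and hence on the braid-insertion upper bound of Theorem~\ref{thm-insertion}). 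You instead bound the link patterns directly by the spectral radius of $\hat T(x)$ and appeal only to the raw numerical gap $\hat\kappa_{\tube^*}<\kappa_{\tube^*}(0_1)$ of \neweqref{eqn:unknots_no_2strings_bounds}. Your approach is more elementary in that it bypasses Theorem~\ref{thm-bounds} entirely; the paper's approach has the advantage of staying within already-established counting statements about embeddings of $L$ rather than needing the extra remark (which you correctly flag) that open $s$-blocks with no interior 2-sections are governed by the same Perron--Frobenius eigenvalue as closed polygons.

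Two small points to clean up. First, the ``size of the linked part'' is defined by cutting only at 2-sections, not at hidden 2-sections, so in general you get $j\le f_L$ link patterns with total size $k$; your bound should be written with $j$ summed over $1,\dots,f_L$ (as the paper does), rather than asserting exactly $f_L$ pieces --- this does not affect the exponential estimate. Second, for the open-block bound you want, the cleanest justification is that a link pattern of span $s$ has its $s-1$ interior 1-patterns lying in $\hat{\mathcal A}_\mathrm{P}$, so its count is bounded by a fixed constant times an entry-sum of $\hat T(x)^{s-2}$; irreducibility and aperiodicity of $\hat T(x)$ (established in Section~\ref{ssec:no_2_sections}) then give the growth rate $\hat\mu_{\tube^*}$ immediately, with no further concatenation needed.
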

\begin{proof}
	Fix any $\alpha \in(0,1]$ and, for each even $n$, consider the set of $n$-edge embeddings of $L$ where the size of the linked part is  $k_n=\lfloor \alpha n\rfloor$. Given an even $n$, when any embedding in the corresponding set is divided at its 2-sections, the number of edges in the (up to $f_L$) linked parts adds up to $k_n$. If there are $j\leq f_L$ linked parts, then the embedding can be thought of as a connected sum of $j$ link patterns with $j+1$ unknot patterns (where some unknot patterns may be empty). Each link pattern has a non-trivial link type and the number of ways that $j$ non-trivial links can give connected sum $L$ is bounded above by $f_L!\binom{f_L-1}{j-1}$. Further, each of the link patterns has no internal 2-sections. The number of ways to distribute $k_n$ edges to $j$ link patterns is  at most $\binom{k_n -1}{j-1}$ and the number of ways to distribute the remaining edges is $\binom{(n-k_n) +j}{j}$.  The $j$ link patterns can be concatenated to create an embedding of $L$ with at most $2j$ 2-sections and the unknot patterns can be concatenated to create an unknot polygon.  
	By taking $P$ to be 2-sections, it follows that
	{\small
	\begin{align}
		p_{\tube^*,n}&(L, \alpha n) 
  \\&\leq \sum_{j=1}^{f_L} \sum_{(L_l | L_1\#L_2...\#L_j=L)} \sum_{(m_l | \sum_{l=1}^{j+1} m_l=n-k_n)}\sum_{(n_l | \sum_{l=1}^j n_l =k_n)} \left(\prod_{l=1}^{j} p_{\tube^*,m_l}(0_1)p_{\tube^*,n_l}(L_l,P, \leq 2)\right) p_{\tube^*,m_{j+1}}(0_1) \nonumber \\
		&\leq   g(f_L) \binom{\lfloor \alpha n\rfloor -1}{f_L-1} \binom{\lfloor(1-\alpha)n\rfloor +2 +f_L}{f_L} p_{\tube^*,\lfloor(1-\alpha) n\rfloor+Q_L}(0_1)p_{\tube^*,\lfloor\alpha n\rfloor +C_L}(L,P, \leq 2f_L +c_L),
	\end{align}
}
	where $g(f_L)=(f_L!)2^{f_L-1}$, $p_{\tube^*,0}(0_1)\equiv 1$, and $Q_L,C_L,c_L$ are constants. We also assume that $n$ is sufficiently large that $\lfloor \alpha n\rfloor -1 \geq 2(f_L-1)$. 
	Taking logarithms, dividing by $n$ and taking the $\limsup$ as $n\to\infty$ then gives: 
	\begin{align}
		\limsup_{n\to\infty} \frac1n &\log p_{\tube^*,n}(L,   \alpha n)\notag \\ &\leq   (1-\alpha)\log\mu_{\tube^*,0_1} + \alpha\limsup_{n\to\infty} \frac{1}{\lfloor \alpha n\rfloor +C_L} \log p_{\tube^*,\lfloor \alpha n\rfloor +C_L}(L,P, \leq 2f_L +c_L) \notag \\  &< \log\mu_{\tube^*,0_1},
	\end{align}
	where the strict inequality comes from Corollary~\ref{cor:density_of_connectsumpatterns} (as soon as $n$ is large enough that $2f_L+c_L\leq \epsilon_Pn$).
\end{proof}

In contrast, sets of embeddings of non-split $L$ where the linked part is strongly localized are \emph{not} exponentially rare; instead the embedding counts follow the same scaling form as $p_{\tube^*,n}(L)$. Let $m_L$ be the minimum size of a link pattern of $L$. 
\begin{cor}\label{cor:corollarys5}
	Given any integer $N\geq m_L$, for any even integer $n\geq m_L$, let $p_{\tube^*,n}(L,\leq  N)$  be the number of  $n$-edge embeddings of $L$  in $\tube^*$ with at least one vertex in the plane $x=0$ and such that the size of the linked part is at most  $N$. Then there exist constants $C_{m_L},C_2$ such that for  sufficiently large $n$, 
	\begin{equation}
		C_{m_L}n^{f_L} p_{\tube^*,n}(0_1) \leq    p_{\tube^*,n}(L, \leq N) \leq C_2n^{f_L} p_{\tube^*,n}(0_1).  
		\label{finalformN}
	\end{equation}
\end{cor}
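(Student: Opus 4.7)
\textbf{Upper bound.} This is immediate: since $p_{\tube^*,n}(L,\leq N)\leq p_{\tube^*,n}(L)$, Theorem~\ref{thm-bounds}~\neweqref{finalform} gives $p_{\tube^*,n}(L,\leq N)\leq C_2 n^{f_L}p_{\tube^*,n}(0_1)$ for all sufficiently large $n$.

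\textbf{Lower bound -- the construction.} My plan is to re-examine the lower-bound construction used in the proof of Theorem~\ref{thm-bounds} and observe that it already produces embeddings whose linked part has bounded size. For each prime factor $L_i$ of $L$, fix once and for all a link pattern $P_i$ of $L_i$ of minimum size $m_{L_i}$; such a pattern exists by Proposition~\ref{prop:knotpattern}. By Theorem~\ref{thm:density_of_2strings-nonHam}, all but exponentially few unknot polygons in $\tube^*$ of length $n$ contain at least $\epsilon n$ 2-sections. Choose any $f_L$ distinct 2-sections of such an unknot and insert $P_1,\ldots,P_{f_L}$ at these locations (using fixed-size connector pads, as in Section~\ref{sec:proof}). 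By Corollary~\ref{cor:unknot_insert_factors} the resulting embedding has link type $L$, its length exceeds $n$ by a constant $\Delta$ depending only on $L$, and -- because the inserted patterns are geometrically isolated from one another by the surrounding unknot material -- when the new embedding is decomposed at its 2-sections the only non-unknot pieces are precisely the inserted $P_i$'s. Hence the linked part has total size exactly $\sum_i m_{L_i}$, which we interpret as $m_L$ in the composite case. Therefore, whenever $N\geq \sum_i m_{L_i}$, every such construction contributes to $p_{\tube^*,n+\Delta}(L,\leq N)$.

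\textbf{Counting and the ratio limit.} Counting the choices of $f_L$ distinct 2-sections at which to insert gives $\binom{\epsilon n}{f_L}=\Theta(n^{f_L})$ constructions per eligible unknot, while the number of (unknot, insertion-site) pairs producing the same embedding of $L$ is bounded by a constant $c_L$ (as in the upper-bound counting argument of Theorem~\ref{thm-bounds}). Consequently
\begin{equation}
p_{\tube^*,n+\Delta}(L,\leq N)\;\geq\; \frac{1}{c_L}\binom{\lfloor\epsilon n\rfloor}{f_L}\bigl[p_{\tube^*,n}(0_1)-p_{\tube^*,n}(0_1,\leq \epsilon n)\bigr].
\end{equation}
Theorem~\ref{thm:density_of_2strings-nonHam} shows that the bracketed quantity is asymptotically equal to $p_{\tube^*,n}(0_1)$, and the ratio limit of Corollary~\ref{cor:ratiolimitcor} (extended to even shifts $\Delta$ via \neweqref{ratiolimit}) lets us replace $p_{\tube^*,n}(0_1)$ by $p_{\tube^*,n+\Delta}(0_1)$ at the cost of a multiplicative constant. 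Re-indexing $n+\Delta\mapsto n$ then yields the claimed form $C_{m_L}n^{f_L}p_{\tube^*,n}(0_1)$.

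\textbf{Main obstacle.} The only genuinely non-routine step is verifying that the 2-section decomposition of the constructed embedding isolates exactly the inserted patterns $P_i$, so that no new linked piece is produced and no two prime factors get merged into one larger linked region. This is where the geometry of $\tube^*$ plays its role: as already exploited in the proof of Theorem~\ref{thm-insertion}, prime factors in a $\tube^*$ embedding are geometrically separated into distinct blocks by 2-sections (or hidden 2-sections). Since each $P_i$ is inserted at a genuine 2-section of the underlying unknot polygon, it remains flanked by unknot material on both sides, so it appears as its own link pattern in the decomposition and contributes $m_{L_i}$ (and nothing more) to the size of the linked part.
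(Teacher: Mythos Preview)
Your proposal is correct and follows essentially the same approach as the paper: the paper's entire proof is the single sentence ``The proof is exactly the same as the proof of Theorem~\ref{thm-bounds} where for the lower bound one uses a size $m_L$ link pattern to insert.'' You have correctly unpacked this, and your ``Main obstacle'' paragraph makes explicit the one point the paper leaves implicit---that because each minimum-size link pattern $P_i$ is inserted at a genuine 2-section and is itself a connected sum pattern (so is flanked by 2-sections in the result), the 2-section decomposition of the constructed embedding isolates precisely the inserted $P_i$'s, giving linked-part size at most $\sum_i m_{L_i}\leq N$.
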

The proof is exactly the same as the proof  of Theorem~\ref{thm-bounds}{} where for the lower bound one uses a size $m_L$ link pattern to insert.      

Furthermore,  the conclusions of Theorem~\ref{thm-bounds}{} hold for any subset of embeddings of $L$ where, as in the proof of the lower bound, an element of the set can be obtained by inserting any fixed-size link patterns corresponding to the prime factors of $L$.
Thus, aside from the size of the linked part one can also restrict other geometric features of the linked part and get the  same scaling form. 
In particular, for the case of $L$ a prime knot, 
restricting to knot patterns in either one of the two modes, 1- or 2-filament, will yield the same types of bounds as in  Theorem~\ref{thm-bounds}{}.
Similarly, for 2 component non-split 4-plats the two modes of linking defined earlier  (one-polygon and two-polygon) follow the same scaling form as in Theorem~\ref{thm-bounds}{}. These results are summarized in the following two example corollaries.
\begin{cor}\label{cor:corollarys6}
	Given any one component $4$-plat 
	knot $K$, even integer $n$ and $i\in \{1,2\}$, let $p_{\tube^*,n}(K; i)$  be the number of  knot-type $K$ polygons in $\tube^*$ with at least one vertex in the plane $x=0$ and such that the polygon contains a link pattern of $K$ in the $i$-filament mode where $i$ is 1 or 2. Then there exist constants $C_{K,i},C_2$ such that for  sufficiently large $n$, 
	\begin{equation}
		C_{K,i}\,n\, p_{\tube^*,n}(0_1) \leq    p_{\tube^*,n}(K;i) \leq C_2 \,n\, p_{\tube^*,n}(0_1).  
		\label{finalformi}
	\end{equation}
\end{cor}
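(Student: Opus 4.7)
The plan is to adapt the proof of Theorem~\ref{thm-bounds} (with $f_K=1$), specializing the inserted link pattern to be of the prescribed filament mode. Both bounds reduce to already-established tools: the upper bound uses nothing beyond Theorem~\ref{thm-bounds} itself, while the lower bound uses the pattern theorem for unknots (Theorem~\ref{thm:density_of_2strings-nonHam}) together with the existence of mode-$i$ link patterns cited from \cite[Result 5]{BEISS18}.

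\textbf{Upper bound.} This is immediate by inclusion. Any polygon counted by $p_{\tube^*,n}(K;i)$ is in particular a knot-type $K$ polygon, so $p_{\tube^*,n}(K;i)\le p_{\tube^*,n}(K)$. Since $K$ is a prime knot, $f_K=1$, and Theorem~\ref{thm-bounds}~\neweqref{finalform} gives $p_{\tube^*,n}(K)\le C_2 n\, p_{\tube^*,n}(0_1)$ for sufficiently large $n$.

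\textbf{Lower bound.} Because $K$ is a prime 4-plat embeddable in $\tube^*$, there is (by the existence assertion cited from \cite[Result 5]{BEISS18}) a link pattern $P_{K,i}$ of $K$ in the $i$-filament mode of some fixed span and size $m_{K,i}$. By Theorem~\ref{thm:density_of_2strings-nonHam}, there exists $\epsilon>0$ such that for $n$ large enough, all but exponentially few $n$-edge unknot polygons in $\tube^*$ contain at least $\epsilon n$ 2-sections. Given such an unknot polygon $\pi$, insert $P_{K,i}$ at one of its 2-sections (padding with a bounded, pattern-determined number of edges $\Delta_{K,i}$ as in the proof of Theorem~\ref{thm-bounds}); the result is a knot-type $K$ polygon of length $n+\Delta_{K,i}$ that contains $P_{K,i}$ as a subblock and hence has a mode-$i$ link pattern of $K$. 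Since each resulting polygon can arise from at most a bounded number of (unknot polygon, insertion location) choices, the counting yields
\begin{equation}
\tfrac{1}{2}\binom{\lfloor\epsilon n\rfloor}{1}\bigl[p_{\tube^*,n}(0_1)-p_{\tube^*,n}(0_1,\le \epsilon n)\bigr]\le p_{\tube^*,n+\Delta_{K,i}}(K;i)
\end{equation}
for all sufficiently large even $n$. Theorem~\ref{thm:density_of_2strings-nonHam} ensures the bracketed factor is $(1-o(1))p_{\tube^*,n}(0_1)$, and the ratio limit Corollary~\ref{cor:ratiolimitcor} lets us replace $p_{\tube^*,n+\Delta_{K,i}}(K;i)$ back by the $p_{\tube^*,n}(0_1)$ scale (up to a constant depending on $\Delta_{K,i}$ and $\mu_{\tube^*,0_1}$). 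This yields the lower bound $C_{K,i}\, n\, p_{\tube^*,n}(0_1)\le p_{\tube^*,n}(K;i)$ with a positive constant.

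\textbf{Main obstacle.} The computational content of the proof has already been absorbed into Theorem~\ref{thm-bounds}, Theorem~\ref{thm:density_of_2strings-nonHam}, and Corollary~\ref{cor:ratiolimitcor}; the only genuine new input is the existence of a mode-$i$ link pattern for every prime 4-plat knot embeddable in $\tube^*$. This is the hard point to pin down carefully: for each $i\in\{1,2\}$ one needs to exhibit (or quote) at least one concrete link pattern of $K$ in the $i$-filament mode so that $P_{K,i}$ is available as a finite-size insertion. Once that existence is granted, the argument is a direct specialization of the proof of Theorem~\ref{thm-bounds} with $f_L=1$.
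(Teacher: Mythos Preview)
Your proposal is correct and matches the paper's approach exactly: the paper gives no separate proof for this corollary but states explicitly that it follows by running the proof of Theorem~\ref{thm-bounds} with the inserted link pattern chosen to be in the specified filament mode, the upper bound coming by inclusion and the lower bound from the unknot pattern theorem plus the ratio limit. Your identification of the one nontrivial input---existence of a mode-$i$ link pattern for each prime 4-plat $K$, quoted from \cite[Result 5]{BEISS18}---is precisely the extra ingredient the paper invokes.
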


\begin{cor}\label{cor:corollarys7}
	Given any $4$-plat 
	2 component non-split link $L$, even integer $n$ and $i\in \{1,2\}$, let $p^{(2)}_{\tube^*,n}(L; i)$  be the number of  link-type $L$ embeddings in $\tube^*$ with at least one vertex in the plane $x=0$ and such that the embedding contains a link pattern of $L$ in the $i$-polygon mode. Then there exist constants $C_{L,i},C_2$ such that for  sufficiently large $n$, 
	\begin{equation}
		C_{L,i}\,n\, p_{\tube^*,n}(0_1) \leq    p^{(2)}_{\tube^*,n}(L;i) \leq C_2\,n\, p_{\tube^*,n}(0_1).  
		\label{finalformlinki}
	\end{equation}
\end{cor}

\section{Discussion}
\label{sec:discussion}
This paper is primarily devoted to proving Theorem~\ref{thm-bounds} that establishes the KE Conjecture for the case of polygons in the lattice tube $\tube^*$. For the upper bound, we proved new knot theory results about 4-plat diagrams (see Section \ref{sec:newupper}). For the lower bound, we proved a new pattern theorem for unknot polygons based on exact transfer-matrix results for polygons in $\tube^*$ (see Section \ref{sec:newlower}). Analogous results can be obtained if one takes the size of an embedding to be its \emph{span} rather than the number of edges.

Bounds of the form in Theorem~\ref{thm-bounds} Equation~\eqref{finalform} are expected to hold for any tube size and in the limit, as the tube dimensions go to infinity, i.e.\ for $\mathbb{Z}^3$ \cite{Bonato_2021, Orl98, Rechnitzer}. Equation~\eqref{unknottubegrowthconstantlimit} suggests that the proof of such bounds for arbitrary tubes could lead to results for the unconfined case. Here, we have established new strategies for proving such bounds. 
For the upper bound, instead of trying to unknot the embedding of $L$ with a series of deletions, we do it by inserting patterns.  For the lower bound,
we prove that unknot polygons have a density of 2-sections, where a 2-section is a place where the polygon can be divided into connected summands. Although these strategies do not immediately generalize to all tube sizes, they provide promising new directions. 

Our proofs identify some fundamental mathematical challenges towards proving the KE Conjecture more generally. For example, the upper bound arguments do not easily extend
to a $3\times 1$ tube
since the extra configurational freedom means that obtaining a 4-plat diagram from an embedding of a 4-plat link will require additional combinatorial analysis. 4-plat diagrams are needed to find the location to insert the braid block that unknots it.  

For the lower bound, the challenge for extending to larger tubes is two-fold. 
To extend Lemma~\ref{lem:tube*_no2sections_upperbound}, we need the transfer matrix for polygons in an $M\times N$ tube (these have been computed for tubes up to $5\times 1$ and $3\times 2$). For Lemma~\ref{lem:tube*_unknots_lowerbound}, we need an accurate count of unknot polygons up to a sufficiently large size. In general this involves generating polygons and checking their knot-type. Existing computational resources  are expected to yield a proof of the lower bound
for other small tube sizes. Proofs for large tube sizes are limited only by available computational resources.

Knots and links embedded in tubes, even tubes as narrow as $\tube^*$, are of independent interest to polymer scientists and molecular biologists. Some structures of interest in molecular biology are co-transcriptional R-loops, which have been the subject of many recent studies due to their biological relevance and potential impact to human health \cite{Chedin-survey,sanz_chedin_2016}. R-loops are prevalent 3-strand nucleic acid structures that form during transcription \cite{sanz_chedin_2016}. They consist of a DNA:RNA hybrid and a DNA single strand. A detailed topological characterization of the entanglement of R-loops is still lacking \cite{Chedin2020, Jonoska2020ModelingRH, Stolz2019, Liu_R-loops, Jonoska_R-loop_Grammar}. Modeling R-loops as chain embeddings in a tube is an ideal way to capture the properties of these braided configurations as 3D objects, and not just as symbolic objects such as those represented by braid words. The methods in this paper suggest ways to explore R-loop embeddings combinatorially and to characterize the breadth of possible topological configurations adopted by them.

Experimental results have sparked interest in computational modeling of DNA knot translocation through narrow channels (e.g.\ \cite{Suma17}; reviewed in \cite{Orlandini_2021}). Using nanopore sensors, scientists have successfully detected complex DNA knots ~\cite{Sharma19, Plesa_2016} at length scales one order of magnitude larger than traditional gel electrophoresis~\cite{Stasiak_1996, Trigueros_2001}. Using traditional DNA topology methods, detection is limited to DNA chains in the
$10^3$ base pair range \cite{Cebri_n_2014, Crisona_1999, Crisona_1994, Grainge_2007}. Tube $\tube^*$ sets a theoretical framework for studying knotting and linking in narrow channels. Here we provide a robust theoretical justification that the connected components of knots and links embedded in narrow tubular regions tend on average to be localized and separated from each other in $\tube^*$, somewhat like  pearls on a string. We show that knot factors occur as though they were distributed binomially along an unknot polygon in $\tube^*$. This  is consistent with the Poisson distribution observed for a variety of tube sizes in the off-lattice model studied in~\cite{micheletti_orlandini_2012}.

\section*{Acknowledgments}

NRB is grateful for support from the Australian Research Council, in particular grant DE170100186.
KI was supported by Japan Society for the Promotion of Science (JSPS) KAKENHI Grant Number 17K14190.
KS is partially supported by Japan Society for the Promotion of Science (JSPS) KAKENHI Grant Numbers 16H03928, 16K13751, 19K21827, 21H00978, 23K20791. CES acknowledges the support of the Natural Sciences and Engineering Research Council of Canada (NSERC) [funding reference number RGPIN-2020-06339], as well as Compute Canada resource allocations and past CRG support from the Pacific Institute for the Mathematical Sciences (PIMS). MV was supported by the National Science Foundation (NSF) Division of Mathematical Sciences (DMS) grants 1716987, 1817156 and 2054347.

\sloppy
\printbibliography

\end{document}